\DeclarePairedDelimiter\ceil{\lceil}{\rceil}
\DeclarePairedDelimiter\floor{\lfloor}{\rfloor}
\newcommand{\E}{\mathbb{E}}
\newcommand{\Prob}{\mathbb{P}}
\newcommand{\totvar}{|\Delta|}
\DeclareMathOperator{\argmax}{argmax}
\DeclareMathOperator{\proj}{proj}
\newcommand\xqed[1]{%
  \leavevmode\unskip\penalty9999 \hbox{}\nobreak\hfill
  \quad\hbox{#1}}
\newcommand\qedexample{\xqed{$\triangle$}}
\newcommand{\R}{\mathbb{R}}
\newcommand{\Z}{\mathbb{Z}}
\newcommand{\naturalnumbers}{\mathbb{N}}
\date{\today} % The date
\crefname{hypothesis}{Hypothesis}{Hypotheses}
\title{Pragmatic distributionally robust optimization for simple integer recourse models%
\thanks{%
%FOR SIOPT REVIEW:
%Submitted to the editors \today.
%FOR ARXIV PREPRINT:
Submitted to arXiv \today.
\funding{This research was supported, in part, by Northwestern University's Center for Optimization \& Statistical Learning and University of Groningen's SOM Research Institute. The research of Ward Romeijnders has been supported by Grant 451-17-034 4043 from The Netherlands Organisation for Scientific Research (NWO). }
} %endthanks
} %endtit
\author{%
E. Ruben van Beesten%
\thanks{Department of Industrial Economics and Technology Management, Norwegian University of Science and Technology (NTNU), Trondheim, Norway (\email{ruben.van.beesten@ntnu.no}).}%
\and %
Ward Romeijnders%
\thanks{Faculty of Economics and Business, University of Groningen, Groningen, the Netherlands (\email{w.romeijnders@rug.nl}).}%
\and %
David P. Morton%
\thanks{Department of Industrial Engineering and Management Science, Northwestern University, Evanston, IL, USA (\email{david.morton@northwestern.edu}).}%
}
\begin{document}

\maketitle

% REQUIRED
\begin{abstract}
Inspired by its success for their continuous counterparts, the standard approach to deal with mixed-integer recourse (MIR) models under distributional uncertainty is to use distributionally robust optimization (DRO). We argue, however, that this modeling choice is not always justified, since DRO techniques are generally computationally extremely challenging when integer decision variables are involved. That is why we propose a fundamentally different approach for MIR models under distributional uncertainty aimed at obtaining models with improved computational tractability. For the special case of simple integer recourse (SIR) models, we show that tractable models can be obtained by \emph{pragmatically} selecting the uncertainty set. Here, we consider uncertainty sets based on the Wasserstein distance and also on generalized moment conditions. We compare our approach with standard DRO and discuss potential generalizations of our ideas to more general MIR models.
An important side-result of our analysis is the derivation of performance guarantees for \textit{convex approximations} of SIR models. In contrast with the literature, these error bounds are not only valid for continuous distribution, but hold for any distribution.
\end{abstract}

% REQUIRED
\begin{keywords}
  stochastic programming; distributional uncertainty; distributionally robust optimization; simple integer recourse;  convex approximations
\end{keywords}

% REQUIRED
\begin{AMS}
  90C15
\end{AMS}

\section{Introduction}\label{sec:introduction}

Two-stage mixed-integer recourse (MIR) models are optimization models of the form
\begin{align}
    \min_{z \in X} \Big\{ c^T z + \E^{\Prob}\big[v(\xi,z)\big] \Big\}, \label{eq:def_MIR}
\end{align}
where $X := \{z \in \Z^{\bar{n}_1}_+ \times \R^{n_1 - \bar{n}_1}_+ \ | \ A z \leq b\}$ is the first-stage feasible set, $\Prob$ is the probability distribution of the random vector $\xi$, and $v$ is the second-stage value function
\begin{align}
    v(\xi,z) := \min_{y \in Y} \{ q(\xi)^T y \ | \ T(\xi) z + W y \geq h(\xi) \}, \qquad \xi \in \Xi, \ z \in \R^{n_1}, \label{eq:def_base_model_v}
\end{align}
representing the cost of the recourse actions $y$ taken from the mixed-integer feasible set $Y := \Z_+^{\bar{n}_2} \times \R_+^{n_2 - \bar{n}_2}$. MIR models combine two computational difficulties: integer restrictions on decision variables and uncertainty about model parameters \cite{kleinhaneveld1999,kucukyavuz2017introduction}. Though these combined difficulties make MIR models extremely hard to solve in general \cite{louveaux2003stochastic}, both are often crucial in order to accurately model practical decision-making problems involving, e.g., capacity expansion, supply-chain design, and electric-grid operations \cite{kucukyavuz2017introduction}.

In many situations we need to incorporate yet another complication to obtain a useful model: distributional uncertainty \cite{rahimian2019distributionally}. That is, not only the values of some model parameters are uncertain, even their \textit{distribution} $\Prob$ is unknown. For example, we may only have a sample of historical observations believed to be from the distribution, or we may only know a few of its moments (e.g., the mean and variance). The predominant approach for dealing with distributional uncertainty is \textit{distributionally robust optimization} (DRO) \cite{rahimian2019distributionally}. However, we argue that DRO may actually be an inferior modeling approach for MIR models under distributional uncertainty, despite its success for \textit{continuous} models.

DRO deals with distributional uncertainty by defining an \textit{uncertainty set} $\mathcal{U}$ of possible distributions $\Prob$ and then optimizing under the \textit{worst-case} distribution from this set. Applying this approach to the MIR model from \eqref{eq:def_MIR} yields the \textit{distributionally robust mixed-integer recourse (DRMIR)} model
\begin{align*}
    \min_{z \in X} \big\{ c^T z + \sup_{\Prob \in \mathcal{U}} \E^{\Prob}\big[v(\xi,z)\big] \big\}.
\end{align*}
A major advantage of DRO is that its ``minimax'' structure often leads to efficiently solvable models \cite{esfahani2018data}, especially in the continuous case. This computational tractability provides a pragmatic justification for using DRO to model distributional uncertainty in the context of continuous recourse models. In the mixed-integer case, however, this justification for the DRO approach is not present, because the second-stage integer restrictions typically cause the value function $v$ to be \textit{non-convex}, making DRMIR models generally hard to solve. Nevertheless, the DRO approach is the commonly accepted and adopted modeling paradigm for dealing with distributional uncertainty, even for MIR models \cite{rahimian2019distributionally}.

The literature includes a few attempts to overcome the computational difficulties of DRMIR by deriving tractable solution approaches. These approaches combine general solution methods for DRO problems with mixed-integer programming techniques. For instance, Bansal et al. \cite{bansal2018decomposition} consider a DRMIR problem with mixed-binary variables in which they restrict the uncertainty set to distributions $\Prob$ on a finite, fixed support with $N$ realizations, $\Xi^N$. As a result, $\Prob$ can be represented by a vector $p \in [0,1]^N$. The authors consider uncertainty sets that can be represented as a polytope in $[0,1]^N$, and propose a Benders-type solution algorithm. Within the same framework regarding the uncertainty set $\mathcal{U}$, Bansal and Mehrotra \cite{bansal2019solving} and Luo and Mehrotra \cite{luo2019decomposition_method} extend the approach of \cite{bansal2018decomposition} to general disjunctive programs and mixed-integer conic programs, respectively. Xie and Ahmed \cite{xie2018distributionally} consider distributionally robust simple integer recourse with a moment-based uncertainty set in which $\mathcal{U}$ consists of all distributions with a given mean and support. The authors derive worst-case distributions and a reformulation as a second-order conic program. Finally, Kim \cite{kim2020dual} considers general DRMIR problems with a data-driven type-1 Wasserstein uncertainty set on a discretized sample space. He develops a dual decomposition approach based on the method by Carøe and Schultz \cite{Caroe1999}.

These DRMIR approaches arguably suffer from two weaknesses. First, by combining solution methods for DRO problems with mixed-integer programming techniques, they inherit computational difficulties from both areas. This inevitably limits computational efficiency of these approaches. Second, the DRMIR approaches from the literature are tailored to very specific classes of uncertainty sets, often on a discrete underlying sample space, which limits their applicability. 

In this paper, we propose a novel modeling approach to MIR models under distributional uncertainty. Instead of ``naively'' applying the DRO approach to MIR models and subsequently trying to deal with the inherent computational difficulty of the resulting DRMIR models, we propose an alternative modeling approach, which we coin \textit{pragmatic DRMIR}. The fundamental distinction in our approach is to alleviate part of the computational difficulty in the \textit{modeling} phase. 
%We achieve this by restricting the uncertainty set $\mathcal{U}$ in such a way that the resulting pragmatic DRMIR model is \textit{convex} and hence, much easier to solve than its ``naive'' DRMIR counterpart.

We illustrate our pragmatic approach on the special case of \textit{simple integer recourse (SIR)} models \cite{louveaux1993stochastic}. In particular, we restrict a standard DRO uncertainty set $\mathcal{U}$ in such a way that the resulting pragmatic model is \textit{convex} and hence, much easier to solve than its ``naive'' counterpart while remaining quantifiably close to the original model. The reason for considering the special case of SIR is that we can derive deep results regarding the interplay between the distribution $\Prob$, the mixed-integer value function $v$, and convexity of the resulting model. In particular, we have an explicit characterization of the set $\mathcal{C}$ of probability distributions for which the expected recourse function
\begin{align}
    Q^{\Prob}(z) := \E^{\Prob}\big[ v(\xi,z) \big], \qquad z \in \R^{n_1}, \label{eq:def_Q}
\end{align}
is convex \cite{kleinhaneveld2006simple}. Using this characterization of ``convexifying distributions'' and accounting for proximity to a specified nominal distribution or available information on generalized moments, we construct our pragmatic uncertainty set to be a subset of $\mathcal{C}$, thus guaranteeing convexity of the resulting \textit{pragmatic DRSIR} problem. 
In fact, we will derive a so-called unit interval moving average transformation $\Gamma$ that is able to transform any probability distribution $\Prob$ to a convexifying distribution $\Prob$ in $\mathcal{C}$. Interestingly, transforming the distribution $\Prob$ using $\Gamma$ turns out to be equivalent to \emph{transforming the value function $v$}. As we will argue, this result is an important stepping stone for extending our approach to more general settings beyond SIR.

Though our pragmatic DRSIR approach is in principle applicable to arbitrary uncertainty sets, we provide a detailed analysis for two particular, popular classes of uncertainty sets:
\textit{Wasserstein balls} around a reference distribution $\Prob_0$  \cite{hanasusanto2018conic,kuhn2019wasserstein,esfahani2018data} and uncertainty sets based on (generalized) moment conditions \cite{delage2010distributionally,xie2018distributionally}. We show how to construct pragmatic counterparts of these uncertainty sets and how the resulting pragmatic DRSIR models can be reformulated as tractable, convex optimization problems. Moreover, we show how these pragmatic models relate to their corresponding standard DRSIR counterparts and we sketch algorithmic approaches for solving the resulting models. 

One important side-result of our analysis is the derivation of \textit{error bounds} for convex approximations of SIR models. Such convex approximations have been proposed in the literature as a means to overcome non-convexity of SIR models \cite{vlerk2004}. The idea is to replace the non-convex value function $v(\xi,x)$ by an approximating function $\tilde{v}(\xi,x)$, which is convex in $x$, yielding a convex approximation model. To guarantee the performance of this approach, we derive upper bounds on the approximation error in terms of the Wasserstein distance between the distribution $\Prob$ and an approximating distribution $\hat{\Prob}$. While such error bounds exist in the literature  for \textit{continuous} distributions \cite{romeijnders2016general}, our error bound is the first that holds for \textit{any} distribution. 

The remainder of this paper is organized as follows. In \cref{sec:preliminaries} we introduce SIR and discuss when these models are convex. 
%In addition, we review the literature on convex approximations for MIR models. 
\Cref{sec:wasserstein_DRSIR} investigates the fundamental properties of separability, duality, and convexity in standard Wasserstein DRSIR models. \Cref{sec:pragmatic_Wass_DRSIR} introduces our pragmatic approach in the Wasserstein setting, yielding a convex problem that can be solved much more efficiently than the standard DRSIR problem. In \cref{sec:error_bounds} we derive a stability result that is used to quantify the difference between these two approaches. Moreover, from the same stability result we obtain an error bound for convex approximations of SIR problems, which is of independent interest. In \cref{sec:pragmatic_moment_DRSIR} we extend our pragmatic approach to a setting with moment-based uncertainty sets. \Cref{sec:discussion} concludes the paper and discusses avenues for future research.

\section{Preliminaries: convexity in SIR} \label{sec:preliminaries}

%In this section we discuss preliminaries needed in the rest of the paper. First, we define SIR and we review convexity properties of such models. In addition to properties known in the literature, we provide new interpretations and notation that we use throughout. Second, we review the literature on convex approximations of MIR models and in particular, error bounds on these approximations. This sets the stage for the error bounds that we derive in \cref{sec:error_bounds}.

\subsection{Definition of SIR}

Simple integer recourse, introduced by Louveaux and van der Vlerk~\cite{louveaux1993stochastic}, is a special case of the MIR model~\eqref{eq:def_MIR}-\eqref{eq:def_base_model_v}. It can be interpreted as a newsvendor problem with integer recourse actions. In particular, it is obtained by choosing the following values for the parameters in the MIR model: $Y = 
\Z_+^{2m}$, $W = I_{2m}$, $q = (q^+, q^-) \in \R_+^{2m}$, $h(\xi) = (\xi, -\xi)$, $\xi \in \Xi$, and $T = \begin{bmatrix} \; \; \tilde{T} \\ -\tilde{T} \end{bmatrix}$ for some $m \times n_1$ matrix $\tilde{T}$. It is not hard to see that the corresponding value function $v(\xi,z)$ only depends on $z$ through $\tilde{T} z$. To simplify notation, we introduce the so-called tender variables $x := \tilde{T} z$. Then, we can write the SIR value function as
\begin{align*}
    v(\xi,x) &:= \min_{y^+, y^-} \big\{ (q^+)^T y^+ + (q^-)^T y^-  \ | \ y^+ \geq \xi - x, \ y^- \geq x - \xi, \ y^+, y^- \in \Z_+^m \big\},
\end{align*}
for every $\xi \in \Xi, x \in \R^m$. See \cref{fig:value_function_SIR} for a plot of a univariate SIR value function $v(\cdot,x)$. 

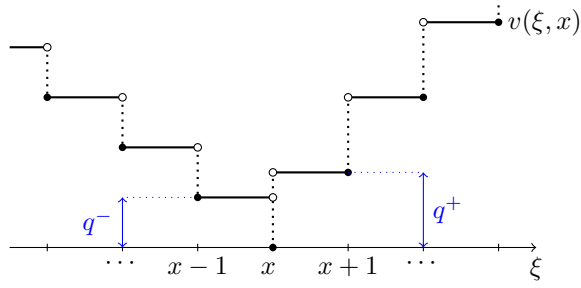
\begin{figure}
    \centering
    \begin{tikzpicture}
% horizontal axis
\draw[->] (-3.5,0) -- (3.5,0);
% labels
\draw (-3,0) node[anchor=north] {}
(-2,0) node[anchor=north] {$\cdots$}
(-1,0) node[anchor=north] {$x-1$}
(0,0) node[anchor=north] {\textcolor{white}{0}$x$ \textcolor{white}{0}}
(1,0) node[anchor=north] {$x+1$}
(2,0) node[anchor=north] {$\cdots$}
(3,0) node[anchor=north] {}
(3.5,0) node[anchor=north] {$\xi$};
%ticks
\draw (-3,-.05) -- (-3,0.05)
(-2,-.05) -- (-2,0.05)
(-1,-.05) -- (-1,0.05)
(0,-.05) -- (0,0.05)
(1,-.05) -- (1,0.05)
(2,-.05) -- (2,0.05)
(3,-.05) -- (3,0.05);

% vertical axis
%\draw[->] (0,0) -- (0,7);
% labels
%none
%ticks
%none

%Draw value function
%value function itself
\draw[thick] 
(-3.5,4*0.666) -- (-3,4*0.666)
(-3,3*0.666) -- (-2,3*0.666)
(-2,2*0.666) -- (-1,2*0.666)
(-1,1*0.666) -- (0,1*0.666)
(0,1.5*0.666) -- (1,1.5*0.666) 
(1,3*0.666) -- (2,3*0.666)
(2,4.5*0.666) -- (3,4.5*0.666);
%connecting dots
\draw[thick, dotted]
(-3,4*0.666) -- (-3,3*0.666)
(-2,3*0.666) -- (-2,2*0.666)
(-1,2*0.666) -- (-1,1*0.666)
(0,0*0.666) -- (0,1.5*0.666)
(1,1.5*0.666) -- (1,3*0.666)
(2,3*0.666) -- (2,4.5*0.666)
(3,4.5*0.666) -- (3,5*0.666);
%dots lower
\node[circle,fill=black, inner sep=1pt] at (-3,3*0.666){};
\node[circle,fill=black, inner sep=1pt] at (-2,2*0.666){};
\node[circle,fill=black, inner sep=1pt] at (-1,1*0.666){};
\node[circle,fill=black, inner sep=1pt] at (0,0*0.666){};
\node[circle,fill=black, inner sep=1pt] at (1,1.5*0.666){};
\node[circle,fill=black, inner sep=1pt] at (2,3*0.666){};
\node[circle,fill=black, inner sep=1pt] at (3,4.5*0.666){};
%dots upper
\node[circle,fill=white, inner sep=1pt, draw] at (-3,4*0.666){};
\node[circle,fill=white, inner sep=1pt, draw] at (-2,3*0.666){};
\node[circle,fill=white, inner sep=1pt, draw] at (-1,2*0.666){};
\node[circle,fill=white, inner sep=1pt, draw] at (0,1*0.666){};
\node[circle,fill=white, inner sep=1pt, draw] at (0,1.5*0.666){};
\node[circle,fill=white, inner sep=1pt, draw] at (1,3*0.666){};
\node[circle,fill=white, inner sep=1pt, draw] at (2,4.5*0.666){};
%label
\draw (3,4.5*0.666) node[anchor=west] {$v(\xi,x)$};

%indicate step sizes
\draw[dotted,blue] (1,1.5*0.666) -- (2,1.5*0.666);
\draw[<->,blue] (2,0*0.666) -- (2,1.5*0.666);
\draw (2,0.75*0.666) node[anchor=west] {\textcolor{blue}{$q^+$}};
\draw[dotted,blue] (-1,1*0.666) -- (-2,1*0.666);
\draw[<->,blue] (-2,0*0.666) -- (-2,1*0.666);
\draw (-2,.5*0.666) node[anchor=east] {\textcolor{blue}{$q^-$}};

\end{tikzpicture}
    \vspace{-20pt}
    \caption{The SIR value function $v(\xi,x)$ as a function of $\xi$ for a given value of $x$, for the one-dimensional case ($m=1$).}
    \label{fig:value_function_SIR}
\end{figure}

One important property of SIR is that it is \textit{separable} in the $m$ dimensions. That is, $v(\xi,x) = \sum_{i=1}^m v_i(\xi_i,x_i)$, where
\begin{align} 
    v_i(\xi_i,x_i) := q_i^+ \ceil{\xi_i - x_i}^+ + q_i^- \floor{\xi_i - x_i}^-, \label{eq:SIR_value_function_rounding}
\end{align}
where $\ceil{s}^+ := \max\{ \ceil{s}, 0\}$ and $\floor{s}^- := \max\{ -\floor{s}, 0 \}$, $s \in \R$, with $\ceil{\cdot}$ and $\floor{\cdot}$ being the round-up and round-down operator, respectively. Hence,
\begin{align}
    Q^\Prob(x) := \E^{\Prob}\big[v(\xi,x)\big] = \sum_{i=1}^m  \E^{\Prob_i}\big[v_i(\xi_i, x_i)\big], \label{eq:separability}
\end{align}
where $\Prob_i = \proj_i \Prob$ is the marginal distribution of $\xi_i$, $i=1,\ldots,m$. Thus for SIR \textit{only the marginal distributions are relevant}. We use this fact pervasively in what follows.

\subsection{Convexity in SIR}

In general, the SIR expected recourse function, $Q^\Prob$, is non-convex, as a consequence of the integrality restrictions in the second-stage problem. Intuitively, this can be understood by interpreting $Q^\Prob$ as a convex combination of non-convex functions $v(\xi,\cdot)$. However, for some particular choices of $\Prob$, the SIR function is convex. From the literature we know the exact set $\mathcal{C}$ of distributions for which this is the case \cite{kleinhaneveld2006simple}. In this subsection we construct this set $\mathcal{C}$ in a novel, insightful way.

The foundation for the set $\mathcal{C}$ of ``convexifying'' distributions is a transformation $\gamma$ defined on the space of real-valued functions on $\R$. We refer to this transformation as the \textit{unit interval moving average (UIMA) transformation} and define it below, after introducing requisite notation.

\begin{definition} \label{def:measure_theory}
Consider the measurable space $(S,\mathcal{S})$, where $S = \R^m$ for some $m \in \naturalnumbers$ and $\mathcal{S}$ is the Borel sigma algebra on $S$. We define the function $e(s) := \sum_{i=1}^m (1 + |s_i|)$, $s \in S$, and the set $\mathcal{G}(S)$ as the collection of all measurable functions $g$ on $S$ with $\sup_{s \in S} \frac{|g(s)|}{e(s)} < +\infty$. Moreover, we define $\mathcal{M}(S)$, $\mathcal{M}_+(S)$, and $\mathcal{P}(S)$ as the collection of signed measures, (non-negative) measures, and probability measures $\mu$ on $(S, \mathcal{S})$, respectively, satisfying $|\int_{S} g(s) \mu(ds)| < + \infty$ for every $g \in \mathcal{G}(S)$. Finally, we define $\mathcal{F}(S)$ as the collection of all joint cumulative distribution functions (cdfs) corresponding to the probability measures in $\mathcal{P}(S)$.
\end{definition}

\begin{definition}\label{def:gamma}
We define the UIMA transformation $\gamma : \mathcal{G}(\R) \to \mathcal{G}(\R)$ by
    \begin{align*}
        \gamma(g)(s) = \int_{s - 1/2}^{s + 1/2} g(t) dt, \qquad s \in \R, \ g \in \mathcal{G}(\R),
    \end{align*}
and we use the notation $\gamma \circ g(s) := \gamma(g)(s)$, $s \in \R$, $g \in \mathcal{G}(\R)$. 
\end{definition}

As its name suggests, the UIMA transformation $\gamma$ of $g$ evaluated at $s \in \R$ can be interpreted as the (continuous) moving average of $g$ over the unit interval centered around $s$. Its relation with convexification of SIR functions is apparent from the following examples, which show that $\gamma$ can be seen as a transformation that ``repairs'' rounding, i.e., is a left inverse of the rounding function $r$ defined below.

\begin{example} \label{ex:UIMA_rounding}
    Consider the rounding function $r(s) = [s] := \max_{z \in \Z} \{ z \ | \ z - 1/2 \leq s \}$, $s \in \R$. Then, $\gamma \circ r(s) = s$, for all $s \in \R$. To see this, first suppose that s is rounded down so that $s - [s] \geq 0$. Then, on the unit interval $[s-1/2, s+1/2]$, the rounding function $r$ is two-valued and equals $r(t) = [s]$ if $s - 1/2 \leq t < [s] + 1/2$ and $r(t) = [s] + 1$ if $[s] + 1/2 \leq t  \leq s + 1/2$, and thus,
    \begin{align*}
        \gamma \circ r(s) &= \int_{s - 1/2}^{s + 1/2} [t] dt = \int_{s - 1/2}^{[s] + 1/2} [s] dt + \int_{[s] + 1/2}^{s + 1/2} ([s] + 1) dt \\
        &= \int_{s - 1/2}^{s + 1/2} [s] dt + \int_{[s] + 1/2}^{s + 1/2} dt = [s] + (s + 1/2 - ([s] + 1/2)) = s.
    \end{align*}
    By a similar derivation for the case that $s$ is rounded up, i.e., $s - [s] < 0$, we find that $\gamma \circ r (s) = s$ for all $s \in \R$. \qedexample
\end{example}

Recall from~\cref{eq:SIR_value_function_rounding} that the SIR value function $v$ can be represented in terms of rounding operations resulting from second-stage integer restrictions. Given \cref{ex:UIMA_rounding}, it is no surprise that applying the UIMA transformation to the one-dimensional function $v_i$ from the separability representation~\cref{eq:separability} of SIR, also ``repairs'' rounding here. Note that for this purpose we need to interpret $v_i$ as a function of a single real variable $s_i = \xi_i - x_i$. 

\begin{example}\label{ex:UIMA_SIR}
    Consider the function $\varphi(s) := q^+ \ceil{s}^+ + q^- \floor{s}^-$, $s \in \R$, where $q^+, q^- \in \R_+$. Then,
    \begin{align*}
        \hat{\varphi}(s) := \gamma \circ \varphi (s) = q^+ \int_{s - 1/2}^{s + 1/2} \ceil{t}^+ dt + q^- \int_{s - 1/2}^{s + 1/2} \floor{t}^- dt.
    \end{align*}
    For $s \geq -1/2$ we have 
    \begin{align*}
        \int_{s - 1/2}^{s + 1/2} \ceil{t}^+ dt &=  \int_{s - 1/2}^{\ceil{s - 1/2}} \ceil{s - 1/2} dt + \int_{\ceil{s - 1/2}}^{s + 1/2} (\ceil{s - 1/2} + 1) dt \\
        &= \int_{s - 1/2}^{s + 1/2} \ceil{s - 1/2} dt + \int_{\ceil{s - 1/2}}^{s + 1/2} dt \\
        &= \ceil{s - 1/2} + (s + 1/2 - \ceil{s - 1/2}) = s + 1/2.
    \end{align*}
    Conversely, for $s < -1/2$ we have $\int_{s - 1/2}^{s + 1/2} \ceil{t}^+ dt = 0$. Using a similar analysis for $\int_{s - 1/2}^{s + 1/2} \floor{t}^- dt$, we obtain
    \begin{align*}
        \hat{\varphi}(s) = q^+ (s + 1/2)^+ + q^- (s - 1/2)^-, \quad s \in \R.    
    \end{align*}
    Observe that the transformed function $\hat{\varphi}$ is piecewise affine and convex. \qedexample
\end{example}

\cref{ex:UIMA_SIR} shows that applying the UIMA transformation $\gamma$ to the functions $v_i$, $i=1,\ldots,m$, yields a transformed value function $\hat{v}$ that is piecewise affine and convex. In turn, this yields a convex SIR function $\hat{Q}^{\Prob}(x) := \E^{\Prob}\big[\hat{v}(\xi,x)\big]$. 

Interestingly, using the UIMA transformation $\gamma$ to instead transform the \textit{distribution} of $\xi$ also yields convex SIR functions. Consider applying $\gamma$ to a cdf $\bar{F} \in \mathcal{F}(\R)$. Let $F$ denote the resulting function, i.e.,
\begin{align*}
    F(s) := \gamma \circ \bar{F}(s) = \int_{s - 1/2}^{s + 1/2} \bar{F}(t) dt, \quad s \in \R.
\end{align*}
It is easy to verify that $F$ is also a cdf, and it has a very clear interpretation. Taking $\bar{F}$ as a starting point, $F$ is the result of smearing the probability mass at $s$ uniformly over the interval $(s - 1/2, s + 1/2)$, for every $s \in \R$. The resulting cdf $F$ can be represented as a convex combination of uniformly distributed random variables on unit intervals, as is shown in the following proposition.

\begin{proposition} \label{prop:properties_gamma}
Let $\gamma$ be the UIMA transformations from \cref{def:gamma}, and let $\xi$ be a random variable with cdf $F = \gamma(\bar{F})$, $\bar{F} \in \mathcal{F}(\R)$. Then:
%the following statements are true:
\begin{enumerate}[(i)]
    \item The distribution of $\xi$ is continuous, i.e., $\xi$ has a probability density function (pdf) $f$. \label{prop:properties_gamma:continuous}
    \item We can write $f(t) = \int_{-\infty}^\infty u_s(t) d\bar{F}(s)$, $t \in \R$, where $u_s$ is the pdf of a uniformly distributed random variable on $(s-1/2,s+1/2)$, $s \in \R$. \label{prop:properties_gamma:xi_integral}
    \item The fractional value of $\xi$ (i.e., $\xi - \floor{\xi}$) is uniformly distributed on $[0,1)$. \label{prop:properties_gamma:fractional_value}    
\end{enumerate}
\end{proposition}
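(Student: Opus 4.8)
The plan is to prove the three claims in the order (ii), (i), (iii), since an explicit formula for the density immediately settles the existence claim in (i). Throughout I write $\mu$ for the probability measure associated with $\bar{F}$ and note that $u_s$ is simply the indicator of a unit interval, $u_s(t) = \mathbf{1}_{(s-1/2,\,s+1/2)}(t)$, because a uniform density on an interval of length one equals one there and zero elsewhere. The crucial elementary observation is the symmetry $u_s(t) = \mathbf{1}_{(t-1/2,\,t+1/2)}(s)$, which lets me interchange the roles of $s$ and $t$ under integration.

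First I would prove (ii) and (i) together. Define the candidate density $f(t) := \bar{F}(t+1/2) - \bar{F}(t-1/2)$. By the symmetry above and Tonelli's theorem, $\int_{\R} u_s(t)\,d\bar{F}(s) = \mu\big((t-1/2,\,t+1/2)\big)$, which agrees with $f(t)$ for all but the countably many atoms of $\mu$, hence Lebesgue-almost everywhere; this establishes the displayed formula in (ii). Monotonicity of $\bar{F}$ gives $f \ge 0$, and it remains to check $\int_{-\infty}^{s} f(t)\,dt = F(s)$ for all $s$. I would verify this by the substitutions $w = t \pm 1/2$ in the two pieces of $\int_{-M}^{s}\big[\bar{F}(t+1/2) - \bar{F}(t-1/2)\big]\,dt$; after telescoping, this collapses to $\int_{s-1/2}^{s+1/2}\bar{F}(w)\,dw - \int_{-M-1/2}^{-M+1/2}\bar{F}(w)\,dw$, and letting $M\to\infty$ the second term vanishes because $\bar{F}(w)\to 0$ as $w \to -\infty$, leaving exactly $F(s) = \gamma(\bar{F})(s)$. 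Since $f$ is then a nonnegative integrable function with $\int_{-\infty}^{s} f = F(s)$ and $\int_{\R} f = \lim_{s\to\infty} F(s) = 1$, the cdf $F$ is absolutely continuous and $f$ is a pdf for $\xi$, which is (i).

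For (iii) I would exploit the mixture interpretation contained in (ii): conditionally on drawing $s$ from $\bar{F}$, the variable $\xi$ is uniform on the unit interval $(s-1/2,\,s+1/2)$. The heart of the argument is an elementary lemma: if $W$ is uniform on any interval $(b,\,b+1)$ of length one, then its fractional part $W - \lfloor W \rfloor$ is uniform on $[0,1)$. I would prove this by writing $b = n + c$ with $n = \lfloor b\rfloor$ and $c \in [0,1)$, splitting $(b,\,b+1)$ at the integer $n+1$ into pieces of length $1-c$ and $c$; on each piece the map $W \mapsto W - \lfloor W\rfloor$ is a translation by an integer, hence measure-preserving, and the two images tile $[0,1)$ as $(c,1)$ and $(0,c)$, so the fractional part is uniform on $[0,1)$. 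Applying the lemma with $b = s-1/2$ for each fixed $s$ and then integrating against $\bar{F}$ (a mixture of identical $\mathrm{Unif}[0,1)$ laws is again $\mathrm{Unif}[0,1)$) yields (iii).

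The main obstacle is not conceptual but the measure-theoretic bookkeeping. I must combine the two individually divergent integrals in (ii) before taking the limit in $M$, so that the telescoping is legitimate, and I must ensure that the almost-everywhere identification of $f$ with $\int_{\R} u_s\,d\bar{F}$ at the atoms of $\mu$ affects neither the integral identity $\int_{-\infty}^{s} f = F$ nor the mixture argument in (iii). Once the unit-interval lemma is in place, the final integration against $\bar{F}$ in (iii) is a routine application of Fubini.
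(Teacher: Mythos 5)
Your proposal is correct and follows essentially the same route as the paper's proof: the same candidate density $f(t)=\bar{F}(t+1/2)-\bar{F}(t-1/2)$, the same identification of $\int_{-\infty}^{\infty}u_s(t)\,d\bar{F}(s)$ with that expression, and the same mixture-of-unit-uniforms reasoning for part (iii). The only difference is one of rigor rather than route: where the paper simply ``differentiates'' $F=\gamma(\bar{F})$ (which strictly speaking only works at points where $\bar{F}$ is continuous at $t\pm 1/2$) and asserts (iii) without proof, you verify the density by integrating $f$ back up to $F$ via the telescoping argument, handle the atoms of $\bar{F}$ explicitly, and supply the unit-interval fractional-part lemma --- all of which is sound and fills in details the paper leaves implicit.
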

\begin{proof}
    Let $t \in \R$. Differentiating the cdf $F = \gamma(\bar{F})$ at $t$ yields $f(t) = \bar{F}(t + 1/2) - \bar{F}(t - 1/2)$. Moreover, 
    \begin{align*}
        \int_{-\infty}^\infty u_s(t) d\bar{F}(s) &= \int_{t - 1/2}^{t + 1/2} 1 d\bar{F}(s) = \bar{F}(t + 1/2) - \bar{F}(t - 1/2),
    \end{align*}
    which proves the first two statements. Next, \eqref{prop:properties_gamma:fractional_value} follows directly from the fact that by  \eqref{prop:properties_gamma:xi_integral}, the pdf $f$ of $\xi$ is a convex combination of pdfs of random variables distributed on unit intervals.
\end{proof}

Now we can restate the convexity result from \cite{kleinhaneveld2006simple} in terms of our UIMA transformation $\gamma$, showing that the SIR function $Q^{\Prob}$ is convex if and only if for every $i=1,\ldots,m$, the marginal cdf $F_i$ of $\xi_i$ under $\Prob$ can be represented by applying the UIMA transformation $\gamma$ to some cdf $\bar{F}_i \in \mathcal{F}(\R)$. 

\begin{lemma}\label{lemma:SIR_convexity_C}
    Consider the SIR function $Q^\Prob$ from \cref{eq:separability} 
    and let $F$ be the joint cdf of $\xi$ under $\Prob$. Suppose that $q^+ + q^- > 0$. Then, $Q^{\Prob}$ is convex if and only if $\Prob \in \mathcal{C}(\R^m)$, where
    \begin{align*}
        \mathcal{C}(\R^m) := \{ \Prob \in \mathcal{P}(\R^m) \ | \ \forall i=1,\ldots,m, \exists \bar{F}_i \in \mathcal{F}(\R) \ \text{s.t.} \ F_i = \gamma(\bar{F}_i) \}.
    \end{align*}
\end{lemma}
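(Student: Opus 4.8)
The plan is to exploit separability to reduce the statement to the univariate case, establish the ``if'' direction through an exchange-of-integration identity that makes precise the equivalence between transforming the distribution and transforming the value function, and treat the ``only if'' direction by matching convexity to membership in the range of $\gamma$. First I would reduce to one dimension. By the separability identity in \cref{eq:separability}, $Q^{\Prob}(x) = \sum_{i=1}^m g_i(x_i)$ with $g_i(x_i) := \E^{\Prob_i}[v_i(\xi_i,x_i)]$, a sum of univariate functions of distinct variables. Such a separable function is convex on $\R^m$ if and only if each $g_i$ is convex on $\R$: sufficiency is immediate, and necessity follows by restricting $Q^{\Prob}$ to the coordinate line $x = x^0 + t e_i$, along which it equals $g_i(x_i^0 + t)$ plus a constant. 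Since $\mathcal{C}(\R^m)$ is defined marginal-by-marginal, it therefore suffices to prove, for a single coordinate (dropping the index $i$), that $g(x) := \E^{\Prob}[v(\xi,x)]$ is convex on $\R$ if and only if the marginal cdf $F$ of $\xi$ satisfies $F = \gamma(\bar F)$ for some $\bar F \in \mathcal{F}(\R)$.

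For the ``if'' direction, suppose $F = \gamma(\bar F)$. Writing the density of $\xi$ as in \cref{prop:properties_gamma}(ii), namely $f(t) = \int u_s(t)\, d\bar F(s)$, and applying Fubini's theorem (justified by the integrability built into \cref{def:measure_theory}), I would obtain
\[
\E^{\Prob}[v(\xi,x)] = \int_\R v(t,x) f(t)\, dt = \int_\R \Big( \int_{s-1/2}^{s+1/2} v(t,x)\, dt \Big) d\bar F(s) = \int_\R \hat v(s,x)\, d\bar F(s),
\]
where $\hat v(s,x) := \gamma\big(v(\cdot,x)\big)(s)$ is precisely the transformed value function computed in \cref{ex:UIMA_SIR}. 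This is the key identity: transforming the distribution by $\gamma$ equals transforming the value function by $\gamma$. Since \cref{ex:UIMA_SIR} shows $\hat v(s,\cdot)$ is piecewise affine and convex for each fixed $s$, the expectation $\int_\R \hat v(s,x)\, d\bar F(s)$ is convex in $x$, and hence $g = Q^{\Prob}$ is convex.

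The ``only if'' direction is the main obstacle. The cleanest route is to invoke the convexity characterization of \cite{kleinhaneveld2006simple} and verify that its condition coincides with $F \in \gamma(\mathcal{F}(\R))$; alternatively one can argue directly. Convex functions on $\R$ are continuous, which forces $\Prob$ to be atomless (since $q^+ + q^- > 0$, an atom would produce an unsmoothed jump of $v(\cdot,x)$ in $g$), and a refinement of the same reasoning forces $F$ to be absolutely continuous with a density $f$, matching \cref{prop:properties_gamma}(i). Convexity also gives that $g'$ is nondecreasing and, from the linear-growth tails of $v$, takes values in $[-q^+,q^-]$. The candidate pre-image $\bar F$ is then forced, either by inverting $f(t) = \bar F(t+1/2) - \bar F(t-1/2)$ (giving $\bar F(t) = \sum_{k \geq 0} f(t-1/2-k)$) or, equivalently, by solving the relation $q^+ \bar F(x-1/2) + q^- \bar F(x+1/2) = g'(x) + q^+$. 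I expect the delicate step to be showing that this $\bar F$ is a genuine cdf, i.e. nondecreasing with the correct limits: this is exactly where the monotonicity of $g'$ must be converted into monotonicity of $\bar F$ through the above (mixture-type) relation. That this is not automatic, and that convexity is doing real work, is clear from the fact that one cannot weaken $F \in \gamma(\mathcal{F}(\R))$ to the merely necessary condition of \cref{prop:properties_gamma}(iii) that $\xi$ have uniform fractional value, since there exist densities with uniform fractional value whose forced $\bar F$ fails to be monotone.
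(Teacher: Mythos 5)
Your proposal is correct and, on the route you yourself recommend as ``cleanest,'' coincides with the paper's proof: the paper disposes of the lemma in one line by combining separability \cref{eq:separability} with Theorem~1 of \cite{kleinhaneveld2006simple}, which is exactly the coordinate-wise reduction plus citation that you describe. The additional self-contained argument you sketch (the Fubini identity for the ``if'' direction, which indeed reappears in the paper as \cref{lemma:gamma_prob_func_equivalent} and \cref{cor:QP=QhatPbar}, and the forced pre-image $\bar F$ for ``only if'') goes beyond what the paper attempts and is a plausible outline, but be aware that its ``only if'' half---in particular the monotonicity of the candidate $\bar F$, which you correctly identify as the crux---is left as a sketch rather than a completed proof.
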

\begin{proof}
    The result follows directly from Theorem~1 in \cite{kleinhaneveld2006simple} and separability of SIR \cref{eq:separability}.
\end{proof}

%\begin{remark}\label{remark:notation_C}
In the rest of the paper, we will use the following notational conventions. First, we slightly abuse notation and write $\Prob = \gamma(\bar{\Prob})$ if $F = \gamma(\bar{F})$ holds for the cdfs $F, \bar{F} \in \mathcal{F}(\R)$ corresponding to the probability measures $\Prob, \bar{\Prob} \in \mathcal{P}(\R)$, respectively. Second, we simply write $\mathcal{C}$ instead of $\mathcal{C}(\R^m)$ if the dimensionality is clear from the context. 
%
%\end{remark}

Interestingly, it turns out that both approaches sketched above, i.e., using $\gamma$ to transform the value function $v$ or to transform the distribution $\bar{\Prob}$, are equivalent. 
\begin{lemma}\label{lemma:gamma_prob_func_equivalent}
    Let $g \in \mathcal{G}(\R)$ and $\bar{\Prob} \in \mathcal{P}(\R)$ be given. Then,
    \begin{align*}
        \E^{\gamma(\bar{\Prob})}\big[g(\xi)\big] = \E^{\bar{\Prob}}\big[ \gamma \circ g(\xi) \big].
    \end{align*}
\end{lemma}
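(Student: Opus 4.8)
The plan is to reduce the identity to a single application of the Fubini--Tonelli theorem, exploiting the mixture representation of the density of $\gamma(\bar{\Prob})$ that was established in \cref{prop:properties_gamma}. The whole statement is essentially a bookkeeping fact about interchanging an integral against $\bar{\Prob}$ with the unit-interval averaging that defines $\gamma$; the only real work is justifying the interchange of the order of integration.

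First I would invoke \cref{prop:properties_gamma}\eqref{prop:properties_gamma:xi_integral}: since $\xi \sim \gamma(\bar{\Prob})$ admits the density $f(t) = \int_{-\infty}^\infty u_s(t) \, d\bar{F}(s)$, where $u_s$ is the density of the uniform law on $(s-1/2,s+1/2)$, I can rewrite the left-hand side as the iterated integral
\[
    \E^{\gamma(\bar{\Prob})}\big[g(\xi)\big] = \int_{-\infty}^\infty g(t) f(t) \, dt = \int_{-\infty}^\infty \int_{-\infty}^\infty g(t) u_s(t) \, d\bar{F}(s) \, dt.
\]
Next I would swap the order of integration. After the swap, the inner integral collapses to $\int_{-\infty}^\infty g(t) u_s(t) \, dt = \int_{s-1/2}^{s+1/2} g(t) \, dt = \gamma \circ g(s)$, which is exactly the UIMA transformation of \cref{def:gamma}. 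Integrating $\gamma \circ g$ against $\bar{F}$ then yields $\E^{\bar{\Prob}}\big[\gamma \circ g(\xi)\big]$, which is the right-hand side, completing the argument.

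The main obstacle, and indeed the only nontrivial step, is justifying the interchange of integration order. I would verify the Tonelli hypotheses by checking absolute integrability, namely that $\int_{-\infty}^\infty \int_{-\infty}^\infty |g(t)| u_s(t) \, dt \, d\bar{F}(s) = \int_{-\infty}^\infty \gamma(|g|)(s) \, d\bar{F}(s) < +\infty$. Here I would use that $g \in \mathcal{G}(\R)$ implies $|g| \in \mathcal{G}(\R)$ (the growth bound against $e$ is unaffected by the absolute value), that $\gamma$ maps $\mathcal{G}(\R)$ into itself by \cref{def:gamma}, and hence that $\gamma(|g|) \in \mathcal{G}(\R)$. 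Since $\bar{\Prob} \in \mathcal{P}(\R)$ integrates every function in $\mathcal{G}(\R)$ to a finite value by \cref{def:measure_theory}, and $\gamma(|g|) \ge 0$, the double integral of the absolute value is finite. This same finiteness simultaneously guarantees that both sides of the claimed identity are well-defined and finite, so the equality is meaningful as stated.
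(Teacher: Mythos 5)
Your proof is correct and follows essentially the same route as the paper's: both invoke the mixture representation of the density of $\gamma(\bar{\Prob})$ from \cref{prop:properties_gamma}, apply Fubini to swap the order of integration, and recognize the inner integral as $\gamma \circ g$. Your explicit verification of the Fubini hypotheses (via $\gamma(|g|) \in \mathcal{G}(\R)$ and the integrability condition in \cref{def:measure_theory}) is more detailed than the paper's one-line justification, but the argument is the same.
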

\begin{proof}
    Let $f$ be the pdf corresponding to the distribution $\Prob = \gamma(\bar{\Prob})$. Then, by \cref{prop:properties_gamma} we obtain
    \begin{align*}
        \E^{\gamma(\bar{\Prob})}\big[g(\xi)\big] &= \int_{-\infty}^\infty g(s) f(s) ds = \int_{-\infty}^\infty g(s) \int_{-\infty}^{\infty} u_s(t) d\bar{F}(t) ds  \\
        &= \int_{-\infty}^\infty \int_{-\infty}^{\infty} g(s) u_s(t) ds d\bar{F}(t) = \int_{-\infty}^\infty \int_{t - 1/2}^{t + 1/2} g(s) ds d\bar{F}(t) \\ 
        &= \int_{-\infty}^\infty ( \gamma \circ g (t) ) d\bar{F}(t) = \E^{\bar{\Prob}}\big[ \gamma \circ g(\xi) \big],
    \end{align*}
    where the third equality holds by Fubini's theorem, which applies since $g \in \mathcal{G}(\R)$.
\end{proof}

\begin{corollary} \label{cor:QP=QhatPbar}
    Consider the SIR function $Q^\Prob$ and let $\Prob, \bar{\Prob} \in \mathcal{P}(\R^m)$ such that $\Prob_i = \gamma(\bar{\Prob}_i)$, $i=1,\ldots,m$. Then,
    \begin{align*}
        Q^\Prob(x) = \hat{Q}^{\bar{\Prob}}(x) := \E^{\bar{\Prob}}\big[\hat{v}(\xi,x)\big],
    \end{align*}
    where $\hat{v}(\xi,x) := \sum_{i=1}^m \hat{v}_i(\xi_i,x_i)$, $\xi,x \in \R^m$, with $\hat{v}_i(\cdot, x_i) := \gamma \circ v_i(\cdot, x_i)$, $x_i \in \R$, i.e.,
    \begin{align*}
        \hat{v}_i(\xi_i,x_i) = q^+_i (\xi_i - x_i + 1/2)^+ + q^-_i (\xi_i - x_i - 1/2 )^-, \quad \xi_i,x_i \in \R.
    \end{align*}
\end{corollary}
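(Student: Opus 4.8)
The plan is to derive this corollary directly from the separability of SIR in \cref{eq:separability} together with the function/distribution equivalence established in \cref{lemma:gamma_prob_func_equivalent}, applied one coordinate at a time. The essential observation is that both $Q^\Prob$ and $\hat Q^{\bar\Prob}$ decouple across the $m$ components, so the multivariate claim reduces to $m$ univariate applications of the lemma; the only content beyond this reduction is identifying $\gamma \circ v_i(\cdot,x_i)$ with the explicit piecewise-affine function stated in the corollary.

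First I would invoke separability to write $Q^\Prob(x) = \sum_{i=1}^m \E^{\Prob_i}[v_i(\xi_i,x_i)]$, where $\Prob_i = \proj_i \Prob$. Fixing a coordinate $i$ and a value $x_i \in \R$, I would regard $g := v_i(\cdot,x_i)$ as a function of the single real variable $\xi_i$. Before applying \cref{lemma:gamma_prob_func_equivalent}, I would verify that $g \in \mathcal{G}(\R)$, i.e.\ that $\sup_s |g(s)|/e(s) < \infty$ with $e(s) = 1 + |s|$: this holds because both $\ceil{s - x_i}^+ \le |s| + |x_i| + 1$ and $\floor{s - x_i}^- \le |s| + |x_i| + 1$, so $|g(s)|$ grows at most linearly in $|s|$. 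Since $\Prob_i = \gamma(\bar\Prob_i)$ by hypothesis, \cref{lemma:gamma_prob_func_equivalent} then yields $\E^{\Prob_i}[v_i(\xi_i,x_i)] = \E^{\bar\Prob_i}[\gamma \circ v_i(\cdot,x_i)(\xi_i)]$.

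Next I would identify $\gamma \circ v_i(\cdot,x_i)$ with the claimed $\hat v_i$. Writing $v_i(t,x_i) = \varphi(t - x_i)$ with $\varphi(s) = q_i^+\ceil{s}^+ + q_i^-\floor{s}^-$ as in \cref{ex:UIMA_SIR}, a change of the integration variable in \cref{def:gamma} shows $\gamma \circ v_i(\cdot,x_i)(s) = \int_{s-1/2}^{s+1/2}\varphi(t - x_i)\,dt = (\gamma\circ\varphi)(s - x_i) = \hat\varphi(s - x_i)$. Substituting the explicit form of $\hat\varphi$ computed in \cref{ex:UIMA_SIR} gives exactly $q_i^+(s - x_i + 1/2)^+ + q_i^-(s - x_i - 1/2)^- = \hat v_i(s,x_i)$. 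Summing the coordinatewise identities over $i$ and using separability once more, now for $\bar\Prob$, gives $Q^\Prob(x) = \sum_{i=1}^m \E^{\bar\Prob_i}[\hat v_i(\xi_i,x_i)] = \E^{\bar\Prob}[\hat v(\xi,x)] = \hat Q^{\bar\Prob}(x)$.

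The computation is short once the tools are in place, so I do not anticipate a genuine obstacle; the one point requiring care is the admissibility check $v_i(\cdot,x_i) \in \mathcal{G}(\R)$, which is precisely the hypothesis under which \cref{lemma:gamma_prob_func_equivalent} (and the Fubini step inside it) is valid, and which also guarantees that every expectation appearing is finite. A secondary bookkeeping point is the translation argument relating $\gamma \circ v_i(\cdot,x_i)$ to the function $\hat\varphi$ of \cref{ex:UIMA_SIR}: one must confirm that $\gamma$ commutes with the shift $t \mapsto t - x_i$, which is immediate from the change of variables in its defining integral.
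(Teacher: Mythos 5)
Your proposal is correct and follows exactly the route the paper takes: its proof is the one-line citation of \cref{lemma:gamma_prob_func_equivalent}, \cref{ex:UIMA_SIR}, and the separability identity \cref{eq:separability}, which are precisely the three ingredients you combine. The only additions in your write-up are the explicit verification that $v_i(\cdot,x_i)\in\mathcal{G}(\R)$ and the observation that $\gamma$ commutes with the shift $t\mapsto t-x_i$, both of which the paper leaves implicit and both of which you handle correctly.
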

\begin{proof}
    The result follows by \cref{lemma:gamma_prob_func_equivalent}, \cref{ex:UIMA_SIR}, and separability of SIR \cref{eq:separability}.
\end{proof}

Both perspectives in \cref{cor:QP=QhatPbar}, i.e., transforming the distribution or transforming the value function, can be convenient, depending on the situation, and we will use both views throughout the paper.
We conclude this subsection with a generalization of the UIMA transformation $\gamma$ to its $m$-dimensional analogue, $\Gamma$, along with a corresponding analogue of \cref{cor:QP=QhatPbar}. The multivariate UIMA transformations $\Gamma$ will be useful in \cref{sec:pragmatic_Wass_DRSIR} and \cref{sec:pragmatic_moment_DRSIR} when we define our pragmatic uncertainty sets. In particular, it will allow us to transform any uncertainty set $\mathcal{U} \nsubseteq \mathcal{C}(\R^m)$ to a pragmatic uncertainty set $\hat{\mathcal{U}} := \Gamma(\mathcal{U}) \subseteq \mathcal{C}(\R^m)$.
\begin{definition}\label{def:Gamma}
    We define the $m$-dimensional UIMA transformation \linebreak $\Gamma : \mathcal{F}(\R^m) \to \mathcal{F}(\R^m)$ by
    \begin{align*}
    \Gamma \circ \bar{F}(s) := \int_{s_1 - 1/2}^{s_1 + 1/2} \cdots \int_{s_m - 1/2}^{s_m + 1/2} \bar{F}(t) dt_m \cdots dt_1, \quad s \in \R^m, \ \bar{F} \in \mathcal{F}(\R^m).
\end{align*}
\end{definition}
%\begin{remark}
    Again, we slightly abuse notation and write $\Prob = \Gamma(\bar{\Prob})$ if $F = \Gamma(\bar{F})$ holds for the cdfs $F, \bar{F} \in \mathcal{F}(\R^m)$ corresponding to the probability measures $\Prob, \bar{\Prob} \in \mathcal{P}(\R^m)$, respectively.
%
%\end{remark}

\begin{remark}\label{remark:Gamma:details}
    For $m=1$, $\Gamma$ reduces to $\gamma$, so indeed $\Gamma$ generalizes $\gamma$. However, the useful relation $\gamma(\mathcal{P}(\R)) = \mathcal{C}(\R)$ does not generalize to $\Gamma$ if $m>1$. Instead, we have the strict inclusion $\Gamma(\mathcal{P}(\R^m)) \subset \mathcal{C}(\R^m)$. Fortunately, for every $\Prob \in \mathcal{C}(\R^m)$, there exists some $\tilde{\Prob} \in \Gamma(\mathcal{P}(\R^m))$ with the \textit{same marginal distributions}. Since only the marginal distributions are relevant for SIR, it follows that we can restrict ourselves to $\Gamma(\mathcal{P}(\R^m))$ instead of $\mathcal{C}(\R^m)$ without loss. See \cref{appendix:Gamma} for details. 
\end{remark}

\begin{lemma} \label{lemma:QP=QhatPbar_Gamma}
    Consider the SIR function $Q^\Prob$ and let $\Prob = \Gamma(\bar{\Prob})$ for some $\bar{\Prob} \in \mathcal{P}(\R^m)$. Then,
    \begin{align*}
        Q^\Prob(x) = \hat{Q}^{\bar{\Prob}}(x) := \E^{\bar{\Prob}}\big[\hat{v}(\xi,x)\big],
    \end{align*}
    where $\hat{v}$ is defined as in \cref{cor:QP=QhatPbar}.
\end{lemma}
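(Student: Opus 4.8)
The plan is to observe that this lemma differs from the already-proved \cref{cor:QP=QhatPbar} only in its hypothesis: the corollary assumes the coordinatewise relation $\Prob_i = \gamma(\bar{\Prob}_i)$ for each $i$, whereas here we are given the single joint relation $\Prob = \Gamma(\bar{\Prob})$. Consequently, the entire content of the lemma is to deduce the former from the latter, i.e., to show that the $i$-th marginal of $\Gamma(\bar{\Prob})$ equals $\gamma$ applied to the $i$-th marginal of $\bar{\Prob}$. Once this marginalization identity is in hand, the conclusion $Q^\Prob(x) = \hat{Q}^{\bar{\Prob}}(x)$ follows verbatim from \cref{cor:QP=QhatPbar}, which already incorporates separability \cref{eq:separability}.

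To establish the marginalization identity, I would compute the marginal cdf $F_i$ directly from \cref{def:Gamma} by sending the remaining coordinates to $+\infty$. Writing $F = \Gamma(\bar{F})$ and using $F_i(s_i) = \lim_{s_j \to +\infty,\, j \neq i} F(s)$, the goal is to push the limit through the $m$-fold integral
\begin{align*}
    F_i(s_i) = \lim_{s_j \to +\infty,\, j \neq i} \int_{s_1 - 1/2}^{s_1 + 1/2} \cdots \int_{s_m - 1/2}^{s_m + 1/2} \bar{F}(t) \, dt_m \cdots dt_1.
\end{align*}
For each fixed $j \neq i$, the inner average $\int_{s_j - 1/2}^{s_j + 1/2} \bar{F}(\cdots, t_j, \cdots)\, dt_j$ is an average over a unit-length interval sliding to $+\infty$; since $\bar{F}$ is nondecreasing in $t_j$ and bounded in $[0,1]$, this average is squeezed between the endpoint values $\bar{F}(\cdots, s_j - 1/2, \cdots)$ and $\bar{F}(\cdots, s_j + 1/2, \cdots)$, both of which tend to $\bar{F}(\cdots, +\infty, \cdots)$, i.e., to $\bar{F}$ with coordinate $j$ marginalized out. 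Iterating this over all $j \neq i$ collapses every outer integral and leaves
\begin{align*}
    F_i(s_i) = \int_{s_i - 1/2}^{s_i + 1/2} \bar{F}_i(t_i) \, dt_i = \gamma(\bar{F}_i)(s_i),
\end{align*}
which is exactly the assertion $\Prob_i = \gamma(\bar{\Prob}_i)$.

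The only point requiring technical care is the interchange of the iterated limits with the integrals, which I would justify by dominated convergence using the uniform bound $0 \le \bar{F} \le 1$, together with Fubini's theorem to treat the nested integral in \cref{def:Gamma} as a genuine multiple integral; both are immediate because $\bar{F}$ is a bounded, monotone cdf. I expect this limit/integral exchange to be the only nontrivial step, with everything else being bookkeeping. Having shown $\Prob_i = \gamma(\bar{\Prob}_i)$ for every $i = 1,\ldots,m$, I would then invoke \cref{cor:QP=QhatPbar} directly to conclude $Q^\Prob(x) = \hat{Q}^{\bar{\Prob}}(x)$, completing the proof.
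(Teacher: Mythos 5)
Your proposal is correct and follows essentially the same route as the paper: the paper proves this lemma by combining \cref{cor:QP=QhatPbar} with \cref{lemma:Gamma_marginal} of \cref{appendix:Gamma}, which establishes exactly the marginalization identity $\Prob_i = \gamma(\bar{\Prob}_i)$ via the limit $s_j \to \infty$ for $j \neq i$ that you compute. Your added justification of the limit/integral interchange (squeezing the unit-interval average between endpoint values of the monotone, bounded cdf) merely fills in a step the paper declares "clear."
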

\begin{proof}
    The proof follows directly from \cref{cor:QP=QhatPbar} and from \cref{lemma:Gamma_marginal} of \cref{appendix:Gamma}.
\end{proof}

\section{Standard Wasserstein DRSIR: separability, duality, and convexity} \label{sec:wasserstein_DRSIR}

In this section we define standard Wasserstein DRSIR problems. In particular, we show that standard Wasserstein DRSIR models are separable and satisfy a strong duality result, but are typically non-convex, except in special cases. Moreover, we perform a preliminary analysis that will serve as the foundation for our pragmatic Wasserstein DRSIR approach in \cref{sec:pragmatic_Wass_DRSIR} and our stability results in \cref{sec:error_bounds}. 

We define the Wasserstein DRSIR function as
\begin{align}
    \mathcal{Q}^{\Prob_0}_\varepsilon(x) := \sup_{\Prob \in \mathcal{B}_\varepsilon(\Prob_0)} \E^{\Prob}\big[v(\xi,x)\big], \label{eq:Wass_DRSIR_problem_naive}
\end{align}
where
\begin{align*}
    \mathcal{B}_\varepsilon(\Prob_0) := \{ \Prob \in \mathcal{P}(\R^m) \ | \ W_p(\Prob_0, \Prob) \leq \varepsilon \},
\end{align*}
i.e., $\mathcal{B}_\varepsilon(\Prob_0)$ is the closed ball of radius $\varepsilon$ around a given \textit{reference distribution} $\Prob_0 \in \mathcal{P}(\R^m)$, where the distance between distributions is measured using the Wasserstein distance $W_p$, defined below. The reference distribution $\Prob_0$ should be interpreted as our ``best-guess'' distribution, based on the available information. A typical choice for $\Prob_0$ is an empirical distribution corresponding to a sample of historical observations, and yields a so-called \textit{data-driven} model \cite{esfahani2018data}. 

The Wasserstein distance $W_p$ \cite{kuhn2019wasserstein} measures the distance between two probability distributions, and allows for measuring the distance between discrete and continuous distributions. This property distinguishes it from other distances between probability measures, such as $\phi$-divergences (see, e.g., \cite{bayraksan2015data}), and is important for our purposes. For example, it allows us to derive error bounds in \cref{subsec:error_bounds} that also hold for discrete distributions, unlike their analogues in the literature. 
%We use the following definition.

\begin{definition} \label{def:Wasserstein_dist}
    Let $\Prob \in \mathcal{P}(S)$ and $\bar{\Prob} \in \mathcal{P}(\bar{S})$ be two probability measures on the same sample space $S = \bar{S} = \R^m$. For $p \in [1,\infty)$, we define the type-$p$ \textit{Wasserstein distance} between $\Prob$ and $\bar{\Prob}$ as
    \begin{align*}
    W_p(\Prob, \bar{\Prob}) := \left( \inf_{\pi \in \Pi(\Prob,  \bar{\Prob})} \Big\{  \int_{S \times \bar{S}} \| s - \bar{s}\|_p^p d\pi(s, \bar{s})  \Big\} \right)^{1/p},
    \end{align*}
    where $\Pi(\Prob, \bar{\Prob})$ is the collection of all joint probability distributions on $S \times \bar{S}$ with marginals $\Prob$ and $\bar{\Prob}$, and $\| \cdot \|_p$ is the $\ell_p$-norm on $\R^m$, defined as $\| z \|^p_p := \sum_{i=1}^m |z_i|^p$, $z \in \R^m$. 
\end{definition}

The Wasserstein distance can be interpreted as the cost of transporting probability mass from one distribution $\Prob$ to another $\bar{\Prob}$ using the cheapest \textit{transportation plan} $\pi$. Here, $\pi(s,\bar{s})$ represents the probability mass transported from $s$ to $\bar{s}$, and the unit cost of moving probability mass from $s$ to $\bar{s}$ is given by $\| s - \bar{s} \|_p^p$. More general definitions exist in the literature, in which $\| s - \bar{s} \|_p$ can be any metric $d(s, \bar{s})$. We choose to restrict the definition to the $\ell_p$-norm, however, since this allows for the separability result in \cref{lemma:Wass_dist_additive} below.

\subsection{Separability}

In \cref{eq:separability} we have seen that SIR is separable in the $m$ dimensions. This means that only the \textit{marginal} distributions are relevant for SIR. In this section, we use this fact to derive a separability result for the Wasserstein DRSIR function $\mathcal{Q}^{\Prob_0}_\varepsilon$. To this end, we first prove a separability result for the Wasserstein distance $W_p$, showing that in the context of Wasserstein DRSIR, the expression $W_p^p(\Prob_0,\Prob)$ (i.e., the Wasserstein distance raised to the $p$th power) is separable in its $m$ dimensions. We start with the observation that separability does \textit{not} hold in general, i.e., for arbitrary $\Prob, \, \bar{\Prob} \in \mathcal{P}(\R^m)$, we typically have that $W_p^p(\Prob, \bar{\Prob}) \neq \sum_{i=1}^m W_p^p(\Prob_i, \bar{\Prob}_i)$. We illustrate this fact with the following example.

\begin{example} \label{ex:Wasserstein_separability}
Let $p=1$ and consider $\Prob, \bar{\Prob} \in \mathcal{P}(\R^m)$, with $\Prob$ uniform on $\{(0,1), \linebreak (1,0)\}$, $\bar{\Prob}$ uniform on $\{(0,0), (0,1), (1,0), (1,1) \}$, and $m=2$. It is not hard to compute that $W_1^1(\Prob,\bar{\Prob}) = 1/2$. However, note that the marginal distributions corresponding to $\Prob$ and $\bar{\Prob}$ are equal. In particular, $\Prob_1$, $\Prob_2$, $\bar{\Prob}_1$, and $\bar{\Prob}_2$ are all uniform on $\{0,1\}$. Hence, $W_1^1(\Prob_i, \bar{\Prob}_i) = 0$, $i=1,2$, and so $\sum_{i=1}^m W_1^1(\Prob_i, \bar{\Prob}_i) = 0 \neq W_1^1(\Prob, \bar{\Prob})$.  \qedexample
\end{example}

Nevertheless, we can always find a distribution $\tilde{\Prob}$ with the same marginals as $\bar{\Prob}$ for which separability does hold.

\begin{lemma}\label{lemma:Wass_dist_additive}
    Let $\Prob \in \mathcal{P}(\R^m)$, $\bar{\Prob} \in \mathcal{P}(\R^m)$, and $p \in [1,\infty)$ be given. Then,
    \begin{align}
        W_p^p(\Prob, \bar{\Prob}) \geq \sum_{i=1}^m W_p^p(\Prob_i, \bar{\Prob}_i). \label{eq:Wasserstein_inequality}
    \end{align}
    Moreover, there exists $\tilde{\Prob} \in \mathcal{P}(\R^m)$ with marginals $\tilde{\Prob}_i = \bar{\Prob}_i$, $i=1,\ldots,m$, such that
    \begin{align}
        W_p^p(\Prob, \tilde{\Prob}) = \sum_{i=1}^m W_p^p(\Prob_i, \tilde{\Prob}_i). \label{eq:Wasserstein_equality}
    \end{align}
\end{lemma}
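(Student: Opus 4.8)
The plan is to prove the two claims separately, deriving the inequality \eqref{eq:Wasserstein_inequality} directly from the coordinatewise structure of the cost, and then establishing the equality \eqref{eq:Wasserstein_equality} by \emph{gluing together} the one-dimensional optimal couplings. For the inequality, I would start from an arbitrary coupling $\pi \in \Pi(\Prob, \bar{\Prob})$. Since the transport cost decomposes as $\|s - \bar{s}\|_p^p = \sum_{i=1}^m |s_i - \bar{s}_i|^p$ by \cref{def:Wasserstein_dist}, we have $\int \|s - \bar{s}\|_p^p \, d\pi = \sum_{i=1}^m \int |s_i - \bar{s}_i|^p \, d\pi$. The key observation is that the pushforward of $\pi$ under the projection $(s, \bar{s}) \mapsto (s_i, \bar{s}_i)$ is itself a coupling $\pi_i \in \Pi(\Prob_i, \bar{\Prob}_i)$, so $\int |s_i - \bar{s}_i|^p \, d\pi = \int |s_i - \bar{s}_i|^p \, d\pi_i \geq W_p^p(\Prob_i, \bar{\Prob}_i)$ by feasibility of $\pi_i$ in the $i$-th marginal transport problem. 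Summing over $i$ and taking the infimum over $\pi$ yields \eqref{eq:Wasserstein_inequality}.

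For the equality, the idea is to construct $\tilde{\Prob}$ so that the coordinatewise optimal couplings can be realized simultaneously. For each $i$, let $\pi_i^* \in \Pi(\Prob_i, \bar{\Prob}_i)$ be an optimal coupling attaining $W_p^p(\Prob_i, \bar{\Prob}_i)$; such a coupling exists explicitly as the monotone (comonotone) coupling of $\Prob_i$ and $\bar{\Prob}_i$, which is optimal on $\R$ for the convex cost $|\cdot|^p$. I would then disintegrate $\pi_i^*$ against its first marginal $\Prob_i$ to obtain a Markov kernel $K_i$ with $\pi_i^*(ds_i, d\bar{s}_i) = \Prob_i(ds_i)\, K_i(s_i, d\bar{s}_i)$, and define a coupling $\pi^*$ on $\R^m \times \R^m$ by $\pi^*(ds, d\bar{s}) := \Prob(ds) \prod_{i=1}^m K_i(s_i, d\bar{s}_i)$. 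Intuitively, one draws $s \sim \Prob$ and then samples the coordinates $\bar{s}_1, \ldots, \bar{s}_m$ independently from $K_1(s_1, \cdot), \ldots, K_m(s_m, \cdot)$. Letting $\tilde{\Prob}$ be the second marginal of $\pi^*$, one checks that $\pi^* \in \Pi(\Prob, \tilde{\Prob})$ and that the $(s_i, \bar{s}_i)$-marginal of $\pi^*$ equals $\pi_i^*$; in particular the $\bar{s}_i$-marginal of $\tilde{\Prob}$ is $\bar{\Prob}_i$, so $\tilde{\Prob}_i = \bar{\Prob}_i$ for every $i$, as required.

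It then remains to evaluate the cost of $\pi^*$. Using the coordinatewise decomposition of the cost together with the fact that the $(s_i, \bar{s}_i)$-marginal of $\pi^*$ is $\pi_i^*$, I get $\int \|s - \bar{s}\|_p^p \, d\pi^* = \sum_{i=1}^m \int |s_i - \bar{s}_i|^p \, d\pi_i^* = \sum_{i=1}^m W_p^p(\Prob_i, \bar{\Prob}_i)$. Since $\pi^*$ is a feasible coupling of $\Prob$ and $\tilde{\Prob}$, this gives $W_p^p(\Prob, \tilde{\Prob}) \leq \sum_{i=1}^m W_p^p(\Prob_i, \tilde{\Prob}_i)$, where I used $\tilde{\Prob}_i = \bar{\Prob}_i$. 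Combining this upper bound with \eqref{eq:Wasserstein_inequality} applied to $\tilde{\Prob}$ in place of $\bar{\Prob}$ forces equality, establishing \eqref{eq:Wasserstein_equality}.

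The main obstacle is the measure-theoretic gluing step: I must justify the existence of the regular conditional kernels $K_i$ via disintegration, and then verify that the product-kernel construction genuinely yields a probability measure on $\R^m \times \R^m$ whose prescribed one- and two-coordinate marginals come out as $\Prob$, $\tilde{\Prob}$, and $\pi_i^*$. This is routine on the standard Borel spaces $\R$ and $\R^m$ but requires care, and it is the only place where more than elementary manipulation of couplings is needed; the attainment of the one-dimensional optima is handled cleanly by the explicit monotone coupling, avoiding any abstract compactness argument.
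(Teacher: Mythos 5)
Your proposal is correct and follows the same overall strategy as the paper: the inequality is obtained by decomposing the cost coordinatewise and pushing an (near-)optimal coupling forward onto each coordinate pair, and the equality is obtained by assembling the one-dimensional optimal couplings into a single $m$-dimensional coupling whose second marginal defines $\tilde{\Prob}$. The only real difference is in execution of the gluing step: the paper describes it informally as ``iteratively moving probability mass along the $i$th direction according to $\bar{\pi}_i$,'' whereas you make this precise by disintegrating each $\pi_i^*$ into a kernel $K_i$ and forming the product-kernel coupling $\Prob(ds)\prod_{i=1}^m K_i(s_i,d\bar{s}_i)$ --- a rigorous realization of the same idea. You also sidestep the paper's caveat about non-attainment of the one-dimensional optima by invoking the explicit monotone coupling, which is indeed optimal on $\R$ for the cost $|\cdot|^p$ with $p\geq 1$; the paper instead falls back on approximating sequences. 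Your version is, if anything, the more careful one, and nothing in it would fail.
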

\begin{proof} 
    To prove inequality \cref{eq:Wasserstein_inequality}, first suppose there exists an optimal transportation plan $\pi^* \in \Pi(\Prob, \bar{\Prob})$ for  the type-$p$ Wasserstein distance $W_p(\Prob, \bar{\Prob})$. Then, it is clear that, $\pi^*_i := \proj_{S_i \times \bar{S}_i} \pi^*$ is a feasible transportation plan for $W_p(\Prob_i, \bar{\Prob}_i)$. Hence, 
    \begin{align*}
        W_p^p(\Prob,\bar{\Prob}) &= \int_{S \times \bar{S}} \| s - \bar{s} \|_p^p \, \pi^*(ds, d\bar{s}) = \int_{S \times \bar{S}} \sum_{i=1}^m  | s_i - \bar{s}_i |^p \, \pi^*(ds, d\bar{s}) \\
        &= \sum_{i=1}^m \int_{S_i \times \bar{S}_i} | s_i - \bar{s}_i |^p \, \pi^*_i(ds_i, d\bar{s}_i) \geq \sum_{i=1}^m W_p^p(\Prob_i, \bar{\Prob}_i).
    \end{align*}
    If there does not exist an optimal transportation plan $\pi^* \in \Pi(\Prob, \bar{\Prob})$, we can form a similar argument for a sequence of transportation plans with an objective converging to the optimum.
    
    Next, to prove \cref{eq:Wasserstein_equality}, assume without loss of generality that there exists an optimal transportation plan $\bar{\pi}_i$ corresponding to $W_p(\Prob_i, \bar{\Prob}_i)$, $i=1,\ldots,m$ (if these do not exist, we can again use sequences of transportation plans converging to the optimum instead). Construct a transportation plan $\tilde{\pi}$ by starting out with $\Prob$, and iteratively moving probability mass along the $i^{th}$ direction according to $\bar{\pi}_i$, $i=1,\ldots,m$. Let $\tilde{\Prob} := \proj_{\bar{S}} \tilde{\pi}$. Then, by construction, $\tilde{\Prob}_i = \bar{\Prob}_i$, $i=1,\ldots,m$. Moreover, since $\tilde{\pi}$ is a feasible transportation plan for $W_p^p(\Prob, \tilde{\Prob})$, we have
    \begin{align*}
        W_p^p(\Prob, \tilde{\Prob}) &\leq \int_{S \times \bar{S}} \| s - \bar{s} \|_p^p \, \tilde{\pi}(ds, d\bar{s}) = \sum_{i=1}^m \int_{S_i \times \bar{S}_i} | s_i - \bar{s}_i |^p \, \tilde{\pi}_i(ds_i, d\bar{s}_i)  \\
        &= \sum_{i=1}^m \int_{S_i \times \bar{S}_i} | s_i - \bar{s}_i |^p \, \bar{\pi}_i(ds_i, d\bar{s}_i) = \sum_{i=1}^m W_p^p(\Prob_i, \bar{\Prob}_i) = \sum_{i=1}^m W_p^p(\Prob_i, \tilde{\Prob}_i).
    \end{align*}
    Combined with \cref{eq:Wasserstein_inequality} this completes the proof.
\end{proof}

Together with separability of SIR from \cref{eq:separability}, the separability result in \cref{lemma:Wass_dist_additive} allows for separability of the optimization problem defining the Wasserstein DSRIR function $\mathcal{Q}_\varepsilon^{\Prob_0}(x)$ into $m$ one-dimensional problems.

\begin{proposition}\label{prop:separability_Q}
    Consider the DRSIR function $\mathcal{Q}_\varepsilon^{\Prob_0}(x)$ for $x \in \R^m$. Then, we have the separability result
    \begin{align*}
        \mathcal{Q}_{\varepsilon}^{\Prob_0}(x) &= \sup_{\varepsilon_1, \ldots, \varepsilon_m \geq 0} \bigg\{ \sum_{i=1}^m \sup_{\Prob_i \in \mathcal{P}(\R)} \big\{  \E^{\Prob_i} \big[ v_i(\xi_i,x_i)\big] \ \left | \  W_p(\Prob_{0,i}, \Prob_i) \leq \varepsilon_i \big\} \right. \ \left | \ \sum_{i=1}^m \varepsilon_i^p = \varepsilon^p \bigg\} \right. \hspace{-4pt}. %\label{eq:Q_separability} 
    \end{align*}
\end{proposition}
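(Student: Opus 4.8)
The plan is to establish the claimed identity by proving two inequalities, relying throughout on the separability of SIR from \cref{eq:separability} together with the two halves of \cref{lemma:Wass_dist_additive}. Since each $v_i$ grows linearly and hence lies in $\mathcal{G}(\R)$, all expectations below are finite and well-defined, and the Wasserstein distances between distributions in $\mathcal{P}(\R^m)$ are finite.

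For the direction ``$\leq$'', I would take an arbitrary $\Prob \in \mathcal{B}_\varepsilon(\Prob_0)$ and set $\varepsilon_i := W_p(\Prob_{0,i}, \Prob_i)$ for each $i$. By inequality \cref{eq:Wasserstein_inequality} we have $\sum_{i=1}^m \varepsilon_i^p \leq W_p^p(\Prob_0, \Prob) \leq \varepsilon^p$. Using separability of SIR, $\E^{\Prob}[v(\xi,x)] = \sum_{i=1}^m \E^{\Prob_i}[v_i(\xi_i,x_i)]$, and each term is bounded by the inner supremum over the ball of radius $\varepsilon_i$ around $\Prob_{0,i}$. Because this inner supremum is nondecreasing in $\varepsilon_i$ (enlarging the radius enlarges the feasible set), I can absorb any slack by increasing one radius to obtain $\varepsilon_i' \geq \varepsilon_i$ with $\sum_i (\varepsilon_i')^p = \varepsilon^p$; the resulting tuple is feasible for the outer supremum and does not decrease the bound. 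Taking the supremum over $\Prob \in \mathcal{B}_\varepsilon(\Prob_0)$ then yields $\mathcal{Q}_\varepsilon^{\Prob_0}(x)$ bounded above by the right-hand side.

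For the reverse direction, fix any admissible $(\varepsilon_1,\ldots,\varepsilon_m)$ with $\sum_i \varepsilon_i^p = \varepsilon^p$ and, for each $i$, any marginal $\Prob_i$ with $W_p(\Prob_{0,i},\Prob_i) \leq \varepsilon_i$. The crux is to exhibit a \emph{single} joint distribution in $\mathcal{B}_\varepsilon(\Prob_0)$ whose marginals are exactly these $\Prob_i$ and whose objective equals $\sum_i \E^{\Prob_i}[v_i]$. This is precisely what the second half of \cref{lemma:Wass_dist_additive} delivers: applying it with $\Prob_0$ in the role of $\Prob$ and any coupling of the $\Prob_i$ in the role of $\bar{\Prob}$ produces a $\tilde{\Prob}$ with marginals $\tilde{\Prob}_i = \Prob_i$ and $W_p^p(\Prob_0,\tilde{\Prob}) = \sum_i W_p^p(\Prob_{0,i},\Prob_i) \leq \sum_i \varepsilon_i^p = \varepsilon^p$, so that $\tilde{\Prob} \in \mathcal{B}_\varepsilon(\Prob_0)$. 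Separability of SIR then gives $\sum_i \E^{\Prob_i}[v_i] = \E^{\tilde{\Prob}}[v(\xi,x)] \leq \mathcal{Q}_\varepsilon^{\Prob_0}(x)$, and taking suprema over the $\Prob_i$ and over the $\varepsilon_i$ completes this direction.

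I expect the reverse inequality to be the only substantive step, and its difficulty is entirely localized in the existence of the equality-achieving coupling $\tilde{\Prob}$ with prescribed marginals; since \cref{lemma:Wass_dist_additive} already grants this, the work here reduces to invoking that lemma with the correct roles assigned. The remaining care is routine: where optimal transportation plans fail to exist one argues along approximating sequences exactly as in the proof of \cref{lemma:Wass_dist_additive}, and one should flag the minor bookkeeping that the outer constraint $\sum_i \varepsilon_i^p = \varepsilon^p$ is an equality whereas \cref{eq:Wasserstein_inequality} only yields ``$\leq$'', which the monotonicity-and-slack argument above reconciles.
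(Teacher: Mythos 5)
Your proposal is correct and rests on exactly the same ingredients as the paper's proof — separability of SIR from \cref{eq:separability} together with both halves of \cref{lemma:Wass_dist_additive} — so it is essentially the same argument, merely organized as two explicit inequalities rather than the paper's reasoning about an optimal $\Prob^*$ depleting the budget. If anything, your version is slightly more careful, since it avoids assuming an optimizer exists and explicitly reconciles the inequality from \cref{eq:Wasserstein_inequality} with the equality constraint $\sum_i \varepsilon_i^p = \varepsilon^p$ in the outer supremum.
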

\begin{proof}
    By definition, $\mathcal{Q}_{\varepsilon}^{\Prob_0}(x) = \sup_{\Prob \in \mathcal{P}(\R^m)} \Big\{ \sum_{i=1}^m \E^{\Prob_i}\big[ v_i(\xi_i,x_i) \big] \ | \ W_p^p(\Prob_0, \Prob) \leq \varepsilon^p \Big\}$. Assume that an optimal solution $\Prob^*$ exists for the right-hand side above (if not, an analogous proof exists in terms of an optimal sequence of solutions). Observe that $\Prob^*$ should deplete the entire Wasserstein budget, i.e., $W_p^p(\Prob_0, \Prob^*) = \varepsilon^p$, since otherwise another feasible solution $\Prob$ exists with a higher objective value. Moreover, separability should hold for this Wasserstein distance, since otherwise, by \cref{lemma:Wass_dist_additive}, there exists $\tilde{\Prob}$ with the same marginal distributions and hence, the same objective function value, that does not deplete the Wasserstein budget. It follows that we can write
    \begin{align*}
        \mathcal{Q}_{\varepsilon}^{\Prob_0}(x) &\hspace{-2pt} = \sup_{\Prob \in \mathcal{P}(\R^m)} \Big\{ \sum_{i=1}^m \E^{\Prob_i}\big[ v_i(\xi_i,x_i) \big] \ | \ \sum_{i=1}^m W_p^p(\Prob_{0,i}, \Prob_i) \leq \varepsilon^p \Big\} \\
        &\hspace{-2pt} = \sup_{\varepsilon_1, \ldots, \varepsilon_m \geq 0} \bigg\{ \sum_{i=1}^m \sup_{\Prob_i \in \mathcal{P}(\R)} \big\{  \E^{\Prob_i} \big[ v_i(\xi_i,x_i)\big] \ \left | \  W_p^p(\Prob_{0,i}, \Prob_i) \leq \varepsilon_i^p \big\} \right. \ \left | \ \sum_{i=1}^m \varepsilon_i^p = \varepsilon^p \bigg\} \right. \hspace{-4pt},
    \end{align*}
    which completes the proof.
\end{proof}

\Cref{prop:separability_Q} separates an $m$-dimensional Wasserstein DRSIR problem into $m$ one-dimensional Wasserstein DRSIR problems. Hence, the properties of general, $m$-dimensional Wasserstein DRSIR problems can be derived by first studying one-dimensional problems and then generalizing using \cref{prop:separability_Q}. We will use this strategy extensively throughout the remainder of the paper.

\subsection{Primal-dual formulation} \label{subsec:primal_formulation}

In this subsection we derive an infinite- \linebreak dimensional LP-representation of the Wasserstein DRSIR problem \cref{eq:Wass_DRSIR_problem_naive}. Moreover, we derive a dual to this primal problem and prove that strong duality holds. 

First, we show that we can replace the value function $v(\cdot,x)$ by its upper semi-continuous envelope $\bar{v}(\cdot,x)$. This is crucial for proving strong duality in \cref{lemma:strong_duality}.

\begin{lemma} \label{lemma:v_usc}
	Consider the DRSIR function $\mathcal{Q}_{\varepsilon}^{\Prob_0}$ from \cref{eq:Wass_DRSIR_problem_naive}. For every $x \in \R^m$, let $\bar{v}(\cdot,x)$ denote the upper semi-continuous envelope of $v(\cdot, x)$. That is, $\bar{v}(\bar{s},x) := \limsup_{s \to \bar{s}} v(s,x)$. Then, for every $\varepsilon > 0$ we have
	\begin{align}
	    \mathcal{Q}_{\varepsilon}^{\Prob_0}(x) &= \sup_{\Prob \in \mathcal{B}_{\varepsilon}(\Prob_0)} \E^{\Prob}\big[ \bar{v}(\xi,x) \big], \quad x \in \R. \label{eq:Q_v_bar}
	\end{align}
\end{lemma}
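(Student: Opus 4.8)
The statement asserts that we may freely replace the lower-semicontinuous step function $v(\cdot,x)$ by its upper-semicontinuous envelope $\bar v(\cdot,x)$ inside the worst-case expectation. Since $v(\cdot,x) \leq \bar v(\cdot,x)$ pointwise, one inequality is immediate:
\begin{align*}
    \mathcal{Q}_{\varepsilon}^{\Prob_0}(x) = \sup_{\Prob \in \mathcal{B}_{\varepsilon}(\Prob_0)} \E^{\Prob}\big[v(\xi,x)\big] \leq \sup_{\Prob \in \mathcal{B}_{\varepsilon}(\Prob_0)} \E^{\Prob}\big[\bar{v}(\xi,x)\big].
\end{align*}
(Note all these expectations are finite, since $v(\cdot,x),\bar v(\cdot,x)\in\mathcal{G}(\R^m)$ by their linear growth, so integrability against any $\Prob\in\mathcal{P}(\R^m)$ holds by \cref{def:measure_theory}.) The entire content is the reverse inequality, and the difficulty is exactly that $\bar v$ and $v$ differ on the grid $D_x := \{s \in \R^m : s_i - x_i \in \Z \text{ for some } i\}$, on which $\Prob$ is allowed to place atoms.

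The plan for the reverse inequality is a \emph{shift} construction. First I would record the key geometric fact about the univariate pieces: by \cref{eq:separability}, $v(s,x)=\sum_i v_i(s_i,x_i)$, and for each coordinate $i$ and each $s_i$ with $s_i-x_i\in\Z$, the envelope value $\bar v_i(s_i,x_i)$ equals a \emph{one-sided} limit of $v_i(\cdot,x_i)$ at $s_i$, since the jumps of $v_i$ are upward (rightward for $s_i-x_i>0$, leftward for $s_i-x_i<0$, and toward the larger of $q_i^\pm$ at $s_i=x_i$); the same one-sided direction $\sigma_i\in\{-1,+1\}$ achieves that value for all sufficiently small displacements. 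I would then define, for $\eta\in(0,1/2)$, the transport map $\tau_\eta$ that shifts coordinate $s_i$ by $\sigma_i\eta$ whenever $s_i-x_i\in\Z$ and leaves $s_i$ unchanged otherwise, and set $\Prob_\eta := (\tau_\eta)_\#\Prob$. By the geometric fact, $v(\tau_\eta(s),x)=\bar v(s,x)$ for \emph{every} $s$, so $\E^{\Prob_\eta}[v(\xi,x)] = \E^{\Prob}[\bar v(\xi,x)]$ \emph{exactly}. Moreover $\tau_\eta$ displaces each point by at most $m^{1/p}\eta$ in $\ell_p$-norm, and using the identity map on the complementary coordinates as a coupling gives $W_p(\Prob,\Prob_\eta)\leq m^{1/p}\eta$, whence by the triangle inequality $\Prob_\eta\in\mathcal{B}_{\varepsilon+m^{1/p}\eta}(\Prob_0)$. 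Taking the supremum over $\Prob\in\mathcal{B}_\varepsilon(\Prob_0)$ yields
\begin{align*}
    \sup_{\Prob \in \mathcal{B}_{\varepsilon}(\Prob_0)} \E^{\Prob}\big[\bar{v}(\xi,x)\big] \leq \mathcal{Q}_{\varepsilon + m^{1/p}\eta}^{\Prob_0}(x), \qquad \eta\in(0,1/2).
\end{align*}

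It remains to let $\eta\downarrow 0$, and the main obstacle is precisely this boundary effect: the shift may push mass out of the ball, so I must establish right-continuity of $\varepsilon \mapsto \mathcal{Q}_{\varepsilon}^{\Prob_0}(x)$ for $\varepsilon>0$. I would prove this through concavity of $t \mapsto g(t) := \sup\{\E^{\Prob}[v(\xi,x)] : W_p^p(\Prob_0,\Prob)\leq t\}$, so that $\mathcal{Q}_\varepsilon^{\Prob_0}(x)=g(\varepsilon^p)$. Concavity follows from two facts: $\Prob\mapsto\E^{\Prob}[v(\xi,x)]$ is affine, and $\Prob\mapsto W_p^p(\Prob_0,\Prob)$ is convex (mixing couplings $\lambda\pi_1+(1-\lambda)\pi_2\in\Pi(\Prob_0,\lambda\Prob_1+(1-\lambda)\Prob_2)$ shows $W_p^p(\Prob_0,\cdot)$ is convex, as the transport cost is linear in the coupling). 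Hence a mixture of budget-$t_1$ and budget-$t_2$ optimizers is feasible at budget $\lambda t_1+(1-\lambda)t_2$, giving $g(\lambda t_1+(1-\lambda)t_2)\geq \lambda g(t_1)+(1-\lambda)g(t_2)$. A finite concave function is continuous on $(0,\infty)$, so $\mathcal{Q}_{\varepsilon+m^{1/p}\eta}^{\Prob_0}(x)\to \mathcal{Q}_\varepsilon^{\Prob_0}(x)$ as $\eta\downarrow0$, closing the gap. (Alternatively, one can avoid the concavity lemma entirely by first contracting toward $\Prob_0$, replacing $\Prob$ by $(1-\delta)\Prob+\delta\Prob_0$ to free up budget $\delta\varepsilon^p$ before shifting, keeping $\Prob_\eta$ inside $\mathcal{B}_\varepsilon(\Prob_0)$ and letting $\delta\downarrow0$; this relies on the same convexity of $W_p^p(\Prob_0,\cdot)$ and finiteness of $\E^{\Prob_0}[\bar v(\xi,x)]$.)
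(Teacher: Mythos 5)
Your proof is correct, and its core device is the same as the paper's: perturb the worst-case distribution by displacing the probability mass sitting on the jump set of $v(\cdot,x)$ in the one-sided direction that realizes the upper semi-continuous envelope, which turns $\E^{\Prob}[v(\xi,x)]$ into $\E^{\Prob}[\bar v(\xi,x)]$ at arbitrarily small Wasserstein cost. Where you genuinely diverge is in how the resulting budget overshoot is absorbed. The paper mixes the shifted distribution with $\Prob_0$, setting $\Prob_n^* = \lambda_n\tilde\Prob_n + (1-\lambda_n)\Prob_0$ with $\lambda_n = \varepsilon/(\varepsilon+n^{-1/p})$, so that the triangle inequality places $\Prob_n^*$ back inside $\mathcal{B}_\varepsilon(\Prob_0)$ exactly, and then passes to the limit along a maximizing sequence; this is precisely the ``contraction toward $\Prob_0$'' alternative you mention in your final parenthesis. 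Your primary route instead establishes right-continuity of the radius-to-value map via concavity of $t\mapsto \sup\{\E^{\Prob}[v(\xi,x)] : W_p^p(\Prob_0,\Prob)\le t\}$, which follows from convexity of $W_p^p(\Prob_0,\cdot)$ (mixing couplings) and linearity of the objective in $\Prob$. That is an extra, correct lemma the paper does not need, but it buys a statement of independent use --- continuity of $\varepsilon\mapsto\mathcal{Q}^{\Prob_0}_\varepsilon(x)$ on $(0,\infty)$ --- and it avoids tracking the mixture weights. One further point in your favor: you shift every point with at least one coordinate on the grid $\{s \in \R^m : s_i - x_i\in\Z \ \text{for some}\ i\}$ and resolve the tie at $s_i = x_i$ by shifting toward the larger of $q_i^\pm$, whereas the paper's construction as written relocates mass only on $x+\Z^m$ and uses $\mathrm{sign}(s_i-x_i)$, which vanishes at $s_i=x_i$; your version is the more careful of the two.
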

\begin{proof}
    Let $x \in \R^m$ be given. Since $\bar{v}(s,x) \geq v(s,x)$, $s,x \in \R^m$, it follows that $\mathcal{Q}_{\varepsilon}^{\Prob_0}(x) \leq \sup_{\Prob \in \mathcal{B}_{\varepsilon}(\Prob_0)} \E^{\Prob}\big[ \bar{v}(\xi,x) \big]$. It remains to show that the reverse inequality also holds.
    
    Let $\{ \bar{\Prob}_n \}_{n=1}^\infty$ be a maximizing sequence of solutions for the right-hand side of \cref{eq:Q_v_bar}. Let $n=1,2,\ldots$ be given and define $\tilde{\Prob}_n$ as follows. Starting out with $\bar{\Prob}_n$, for every $s \in x + \Z^m$ that has probability mass under $\bar{\Prob}_n$, move the mass to $\tilde{s}$, defined by $\tilde{s}_i := \text{sign}(s_i - x_i)\delta_n$, where $\delta_n := \big(n \Prob_n(x + \Z^m)\big)^{-1} \leq 1$. Then, by construction, $\E^{\tilde{\Prob}_n}\big[v(\xi,x)\big] = \E^{\bar{\Prob}_n}\big[\bar{v}(\xi,x)\big]$ and $W_p^p(\bar{\Prob}_n, \tilde{\Prob}_n) \leq m \bar{\Prob}_n(x + \Z^m) \delta_n = n^{-1}$. Define the distribution $\Prob^*_n := \lambda_n \tilde{\Prob}_n + (1 - \lambda_n) \Prob_0$, where $\lambda_n := \frac{\varepsilon}{\varepsilon + n^{-1/p}} \in (0,1]$. Then, using the triangle inequality for the Wasserstein distance \cite{clement2008elementary}, we have
    \begin{align*}
        W_p(\Prob_0, \Prob_n^*) &= \lambda_n W_p(\Prob_0, \tilde{\Prob}_n) \leq \lambda_n \big( W_p(\Prob_0, \bar{\Prob}_n) + W_p(\bar{\Prob}_n, \tilde{\Prob}_n) \big) \leq \lambda_n (\varepsilon + n^{-1/p}) = \varepsilon.
    \end{align*}
    Hence, $\Prob^*_n$ is feasible in the maximization problem defining $\mathcal{Q}^{\Prob_0}_\varepsilon(x)$. Moreover, we have
    \begin{align*}
        \lim_{n \to \infty} \E^{\Prob^*_n} \big[ v(\xi,x) \big] &= \lim_{n \to \infty} \Big( \lambda_n \E^{\tilde{\Prob}_n} \big[ v(\xi,x) \big] + (1 - \lambda) \E^{\Prob_0} \big[ v(\xi,x) \big] \Big) \\
        &= \lim_{n \to \infty} \lambda_n \cdot \lim_{n \to \infty} \E^{\tilde{\Prob}_n} \big[ v(\xi,x) \big] +  \lim_{n \to \infty} (1 - \lambda_n) \E^{\Prob_0} \big[ v(\xi,x) \big] \\
        &= 1 \cdot \lim_{n \to \infty} \E^{\bar{\Prob}_n} \big[ \bar{v}(\xi,x) \big] + 0 =  \sup_{\Prob \in \mathcal{B}_{\varepsilon}(\Prob_0)} \E^{\Prob}\big[ \bar{v}(\xi,x) \big].
    \end{align*}
    Hence, $\mathcal{Q}_{\varepsilon}^{\Prob_0}(x) \geq \sup_{\Prob \in \mathcal{B}_{\varepsilon}(\Prob_0)} \E^{\Prob}\big[ \bar{v}(\xi,x) \big]$, which completes the proof.
\end{proof}

Next, we provide a primal-dual infinite-dimensional LP representation of the Wasserstein DRSIR poblem \cref{eq:Wass_DRSIR_problem_naive} defining $\mathcal{Q}^{\Prob_0}(x)$. To define the primal problem, observe that instead of optimizing over the distribution $\Prob \in \mathcal{B}_\varepsilon(\Prob_0)$, we can optimize over the transportation plan $\pi \in \Pi(\Prob_0, \Prob)$ corresponding to the Wasserstein distance $W_p(\Prob_0, \Prob)$. This allows us to express the Wasserstein constraint $W_p(\Prob_0, \Prob) \leq \varepsilon$ as a \textit{linear} constraint in $\pi$:
\begin{align*}
    W_p(\Prob_0, \Prob) \leq \varepsilon &\iff \inf_{\pi \in \Pi(\Prob_0, \Prob)} \int_{S \times \bar{S}} \|s - \bar{s}\|_p^p \pi(ds, d\bar{s}) \leq \varepsilon^p.
\end{align*}
Observing that $\pi \in \Pi(\Prob_0, \Prob)$ is equivalent to the conditions $\pi \in \mathcal{M}_+(S \times \bar{S})$, $\proj_{S} \pi = \Prob_0$, and $\proj_{\bar{S}} \pi = \Prob$, we obtain following \textit{primal} infinite-dimensional LP-formulation:
\begin{align}
    \mathcal{Q}_{\varepsilon}^{\Prob_0}(x) &= \sup_{\pi \in \mathcal{M}_+(S \times \bar{S})}  \left \{ \left. \int_{S \times \bar{S}} \bar{v}(\bar{s}, x) \pi(ds, d\bar{s}) \  \right | \ \proj_{S} \pi = \Prob_0, \right. \label{eq:Wass_DRSIR_problem_primal}  \\ 
    &\qquad \qquad \qquad \qquad \qquad \qquad \qquad \qquad \left.  \ \int_{S \times \bar{S}} \|s - \bar{s}\|_p^p \pi(ds, d\bar{s}) \leq \varepsilon^p \right \}. \nonumber 
\end{align}
Introducing the dual variables $\nu \in \mathcal{G}(S)$ and $\lambda \in \R_+$ for the first and second constraint, respectively, we can write the corresponding \textit{dual} problem as
\begin{align}
    \inf_{\substack{\lambda^p \in \R_+, \\ \ \nu \in \mathcal{G}(S)}} \Big\{ \lambda \varepsilon + \int_{S} \nu(s) \Prob_0(ds) \ \left | \ \nu(s) + \lambda \|s - \bar{s}\|_p^p \geq \bar{v}(\bar{s}, x), \ (s,\bar{s}) \in S \times \bar{S} \Big\}. \right. \label{eq:Wass_DRSIR_problem_dual}
\end{align}
We can show that strong duality holds between these primal-dual formulations.

\begin{lemma}\label{lemma:strong_duality}
Let $x \in \R^m$ and $\varepsilon > 0$ be given. Then, strong duality holds for the primal-dual pair \cref{eq:Wass_DRSIR_problem_primal}--\cref{eq:Wass_DRSIR_problem_dual}. Consequently, we can write
\begin{align}
    \mathcal{Q}_{\varepsilon}^{\Prob_0}(x) &= \inf_{\lambda \in \R_+} \Big\{ \lambda \varepsilon^p + \int_{S} \nu^\lambda(s,x) \Prob_0(ds) \Big\}, \label{eq:Wass_DRSIR_strong_duality}
\end{align}
where
\begin{align}
    \nu^\lambda(s,x) := \sup_{\bar{s} \in \bar{S}} \{ \bar{v}(\bar{s},x) - \lambda \| s - \bar{s} \|_p^p \}, \quad s \in S. \label{eq:nu_sup}
\end{align}
\end{lemma}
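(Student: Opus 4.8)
The plan is to establish strong duality by relaxing only the single scalar budget constraint and then collapsing the resulting inner problem pointwise to obtain \eqref{eq:Wass_DRSIR_strong_duality}. Weak duality is immediate: for any dual-feasible pair $(\lambda,\nu)$ and any primal-feasible $\pi$, integrating the constraint $\nu(s) + \lambda\|s-\bar s\|_p^p \ge \bar v(\bar s,x)$ against $\pi$ and using $\proj_S\pi = \Prob_0$ (so that $\int \nu(s)\,d\pi = \int_S \nu\,d\Prob_0$) together with $\int\|s-\bar s\|_p^p\,d\pi \le \varepsilon^p$ gives $\int \bar v(\bar s,x)\,d\pi \le \lambda\varepsilon^p + \int_S \nu\,d\Prob_0$; taking the supremum over $\pi$ and the infimum over $(\lambda,\nu)$ shows the primal value is at most the dual value. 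I also note that, for fixed $\lambda$, the dual objective in \eqref{eq:Wass_DRSIR_problem_dual} is minimized over $\nu$ pointwise: the smallest feasible $\nu$ is exactly $\nu(s) = \nu^\lambda(s,x)$ from \eqref{eq:nu_sup}, so the dual already equals the right-hand side of \eqref{eq:Wass_DRSIR_strong_duality}. It therefore remains to prove the reverse (no-gap) inequality.

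For the reverse inequality I would keep the marginal constraint $\proj_S\pi = \Prob_0$ inside the feasible set and Lagrangianize only the budget inequality. Since the diagonal coupling $\pi_0$ supported on $\{s = \bar s\}$ (i.e. $\Prob = \Prob_0$) satisfies $\int\|s-\bar s\|_p^p\,d\pi_0 = 0 < \varepsilon^p$ because $\varepsilon > 0$, Slater's condition holds for this single convex inequality, and Lagrangian duality gives $\mathcal{Q}_\varepsilon^{\Prob_0}(x) = \inf_{\lambda\ge 0}\{\lambda\varepsilon^p + h(\lambda)\}$, where $h(\lambda) := \sup_{\pi:\,\proj_S\pi=\Prob_0} \int_{S\times\bar S} [\bar v(\bar s,x) - \lambda\|s-\bar s\|_p^p]\,d\pi$. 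Working with the upper semi-continuous envelope $\bar v$ from \cref{lemma:v_usc} is essential here: $\mathcal{Q}_\varepsilon^{\Prob_0}$ was rewritten in terms of $\bar v$ there, and upper semicontinuity is what makes the inner maximization well posed.

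The crux is to evaluate $h(\lambda) = \int_S \nu^\lambda(s,x)\,\Prob_0(ds)$. Disintegrating any feasible $\pi$ against its fixed first marginal as $\pi(ds,d\bar s) = \Prob_0(ds)\,\kappa_s(d\bar s)$ with $\kappa_s$ a probability kernel, the inner $\bar s$-integral is bounded pointwise by $\nu^\lambda(s,x)$, which yields $h(\lambda) \le \int_S \nu^\lambda(s,x)\,\Prob_0(ds)$. For the reverse bound I would, for each $\delta > 0$, measurably select $\bar s(\cdot)$ with $\bar v(\bar s(s),x) - \lambda\|s-\bar s(s)\|_p^p \ge \min\{\nu^\lambda(s,x),\,1/\delta\} - \delta$ and take the pushforward of $\Prob_0$ under $s \mapsto (s,\bar s(s))$ as a feasible $\pi$; letting $\delta \downarrow 0$ gives the matching lower bound. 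This interchange of supremum and integration is precisely the interchangeability principle for normal integrands (e.g. Rockafellar--Wets, \emph{Variational Analysis}, Thm.~14.60), which applies because $(s,\bar s)\mapsto \bar v(\bar s,x) - \lambda\|s-\bar s\|_p^p$ is upper semi-continuous in $\bar s$ and measurable in $s$. As an alternative to this self-contained route, one may instead invoke the general strong-duality theorems for Wasserstein DRO (e.g. \cite{esfahani2018data,kuhn2019wasserstein}).

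I expect two main obstacles. First, the measurable-selection/interchange step is the technical heart; it relies on upper semicontinuity of $\bar v(\cdot,x)$, hence on \cref{lemma:v_usc}, so that the pointwise supremum in \eqref{eq:nu_sup} is a well-behaved, measurable function of $s$. Second, I must control finiteness and integrability: for $\lambda$ too small, $\nu^\lambda(\cdot,x) \equiv +\infty$ (when $p=1$ the penalty cannot dominate the linear growth of $v$), so those $\lambda$ contribute $+\infty$ and are irrelevant to the infimum, whereas for $\lambda$ large enough---or any $\lambda > 0$ when $p > 1$, since the penalty is then superlinear---the growth condition $\bar v(\cdot,x) \in \mathcal{G}(\bar S)$ forces $\nu^\lambda(\cdot,x) \in \mathcal{G}(S)$, making all integrals against $\Prob_0 \in \mathcal{P}(\R^m)$ finite and the above reductions legitimate. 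I would finally remark that closedness and tightness make $\mathcal{B}_\varepsilon(\Prob_0)$ weakly compact, so the primal supremum is in fact attained, although attainment is not needed for the Slater-based argument.
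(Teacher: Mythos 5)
Your proof is correct, but it takes a more self-contained route than the paper. The paper's own proof is essentially a two-step citation: equality of \cref{eq:Wass_DRSIR_problem_primal} and \cref{eq:Wass_DRSIR_problem_dual} is obtained by invoking Theorem~1 of \cite{gao2016distributionally}, whose hypothesis (upper semi-continuity of the integrand) is exactly what \cref{lemma:v_usc} supplies, and then \cref{eq:Wass_DRSIR_strong_duality} follows from the same observation you make, namely that $\nu$ enters the dual objective with a non-negative coefficient so the pointwise-smallest feasible choice $\nu(s)=\nu^\lambda(s,x)$ is optimal. You instead reprove the underlying duality theorem: weak duality by integrating the dual constraint against a feasible coupling, then the no-gap direction by dualizing only the scalar budget constraint (noting the diagonal coupling gives a Slater point since $\varepsilon>0$, and that the perturbation function is finite and concave) and collapsing the inner maximization over couplings with fixed first marginal to $\int_S\nu^\lambda(s,x)\,\Prob_0(ds)$ via disintegration and a measurable-selection/interchangeability argument in the style of Rockafellar--Wets. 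This is essentially the proof of the cited theorem, so the two routes are mathematically equivalent in content; what yours buys is transparency about where each hypothesis is used ($\varepsilon>0$ for strict feasibility, upper semi-continuity of $\bar v(\cdot,x)$ for the measurable selection, the growth condition $\bar v(\cdot,x)\in\mathcal{G}(\bar S)$ for finiteness of $\nu^\lambda$ when $\lambda$ is large enough, or for any $\lambda>0$ when $p>1$), at the cost of carrying the measure-theoretic machinery that the paper delegates to \cite{gao2016distributionally}. Your closing remarks on the irrelevance of the $\lambda$ for which $\nu^\lambda\equiv+\infty$ and on weak compactness of $\mathcal{B}_\varepsilon(\Prob_0)$ are correct but not needed for the statement.
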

\begin{proof}
    Using the fact that $\bar{v}(\cdot,x)$ is upper semi-continuous, strong duality (i.e., equality) between the primal-dual pair \cref{eq:Wass_DRSIR_problem_primal}--\cref{eq:Wass_DRSIR_problem_dual} follows from Theorem~1 in \cite{gao2016distributionally}. Observing that for every $s \in S$, $\nu(s)$ enters the dual objective with a non-negative coefficient, it follows that $\nu(s) = \nu^\lambda(s,x)$ is optimal for every $\lambda \in \R_+$ and hence, the right-hand side of \cref{eq:Wass_DRSIR_strong_duality} is equal to the dual in \cref{eq:Wass_DRSIR_problem_dual}. This completes the proof.
\end{proof}

Note that from an optimization perspective, the non-concavity and non-smooth- \linebreak ness of $\bar{v}(\cdot,x)$ in the right-hand side of \cref{eq:nu_sup} complicate the dual formulation of $\mathcal{Q}^{\Prob_0}_\varepsilon(x)$. In \cref{sec:pragmatic_Wass_DRSIR,sec:pragmatic_moment_DRSIR} we will show how our pragmatic approach overcomes this issue.

\subsection{Convexity of $\mathcal{Q}_\varepsilon^{\Prob_0}$} \label{subsec:convexity_result}

We conclude this section by investigating conditions under which the Wasserstein DRSIR function $\mathcal{Q}_\varepsilon^{\Prob_0}$ is convex. To this end, we investigate the special case $p=1$, making use of the primal-dual formulation derived above. Initially, we anticipate that $\mathcal{Q}_\varepsilon^{\Prob_0}$ will be non-convex as a consequence of non-convexity of the underlying value function $v(\xi,x)$, as a function of $x$. Indeed, we show that $\mathcal{Q}_\varepsilon^{\Prob_0}$ is typically non-convex, except for values of $\varepsilon$ that are sufficiently large under some special conditions on the cost vector $q$.

From \cref{eq:nu_sup} and the fact that $\nu$ has a non-negative coefficient in the dual objective it follows that $\lambda \geq \| q \|_\infty$; otherwise, we obtain $\nu^\lambda(s,x) = +\infty$ and hence, a dual objective of $+\infty$. As a result, given $\lambda \geq \| q \|_\infty$ we can compute an explicit expression for $\nu^\lambda(s,x)$:
\begin{align}
    \nu^\lambda(s,x) = \bar{v}(s,x) + r^\lambda(s,x), \quad s,x \in \R^m, \label{eq:def_nu}
\end{align}
with $r^\lambda(s,x) := \sum_{i=1}^m r^\lambda(s_i,x_i)$, $s,x \in \R^m$, where, with $x_i^\lambda := x_i - (q^+_i - q^-_i)/\lambda$,
\begin{align}
    r^\lambda_i(s_i,x_i) &:= \begin{cases}
    \max\{ q^-_i - \lambda (s_i - \floor{s_i}_{x_i}), 0 \} &\text{if } s_i \leq x_i^\lambda, \\
    \max\{ q^+_i - q^-_i - \lambda (\ceil{s_i}_{x_i} - s_i), 0\} &\text{if } x_i^\lambda <  s_i < x_i, \\
    \max\{ q^+_i - \lambda (\ceil{s_i}_{x_i} - s_i), 0\} &\text{if } s_i \geq x_i,
    \end{cases} \label{eq:def_r_plus}
\end{align}
if $q_i^+ \geq q_i^-$, and 
\begin{align}
    r^\lambda_i(s_i,x_i) &:= \begin{cases}
    \max\{ q^-_i - \lambda (s_i - \floor{s_i}_{x_i}), 0 \} &\text{if } s_i \leq x_i, \\
    \max\{ q^-_i - q^+_i - \lambda (s_i - \floor{s_i}_{x_i}), 0\} &\text{if } x_i <  s_i < x_i^\lambda, \\
    \max\{ q^+_i - \lambda (\ceil{s_i}_{x_i} - s_i), 0\} &\text{if } s_i \geq x_i^\lambda,
    \end{cases} \label{eq:def_r_minus}
\end{align}
if $q_i^+ \leq q_i^-$. See \cref{fig:nu_star} for an illustration. Using this representation of $\nu^\lambda$, we can prove that $\lambda = \| q \|_\infty$ is optimal.

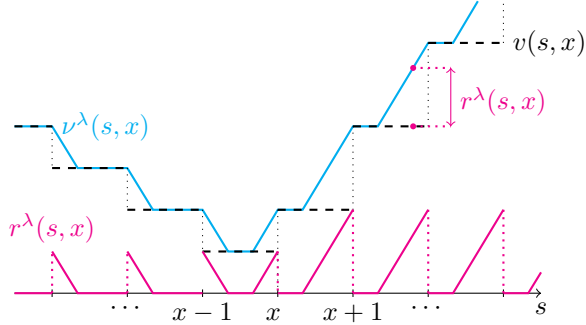
\begin{figure}
    \centering
    \begin{tikzpicture}
% horizontal axis
\draw[->] (-3.5,0) -- (3.5,0);
% labels
\draw (-3,0) node[anchor=north] {}
(-2,0) node[anchor=north] {$\cdots$}
(-1,0) node[anchor=north] {$x-1$}
(0,0) node[anchor=north] {\textcolor{white}{0}$x$ \textcolor{white}{0}}
(1,0) node[anchor=north] {$x+1$}
(2,0) node[anchor=north] {$\cdots$}
(3,0) node[anchor=north] {}
(3.5,0) node[anchor=north] {$s$};
%ticks
\draw (-3,-.05) -- (-3,0.05)
(-2,-.05) -- (-2,0.05)
(-1,-.05) -- (-1,0.05)
(0,-.05) -- (0,0.05)
(1,-.05) -- (1,0.05)
(2,-.05) -- (2,0.05)
(3,-.05) -- (3,0.05);

% vertical axis
%\draw[->] (0,0) -- (0,7);
% labels
%none
%ticks
%none

%Draw nu^* 
\draw[thick, cyan] 
(-3.5,4*0.555) -- (-3,4*0.555) -- (-2.66,3*0.555) -- (-2,3*0.555) -- (-1.66,2*0.555) -- (-1,2*0.555) -- (-.66,1*0.555) -- (-.33,1*0.555) -- (0,2*0.555) -- (0.33,2*0.555) -- (1,4*0.555) -- (1.33,4*0.555) -- (2,6*0.555) -- (2.33,6*0.555) -- (2.66,7*0.555);
%label
\draw (-3,4*0.555) node[anchor=west] {\textcolor{cyan}{$\nu^\lambda(s,x)$}};

%Draw value function
%value function itself
\draw[thick, dashed] 
(-3.5,4*0.555) -- (-3,4*0.555)
(-3,3*0.555) -- (-2,3*0.555)
(-2,2*0.555) -- (-1,2*0.555)
(-1,1*0.555) -- (0,1*0.555)
(0,2*0.555) -- (1,2*0.555) 
(1,4*0.555) -- (2,4*0.555)
(2,6*0.555) -- (3,6*0.555);
%connecting dots
\draw[dotted]
(-3,4*0.555) -- (-3,3*0.555)
(-2,3*0.555) -- (-2,2*0.555)
(-1,2*0.555) -- (-1,1*0.555)
(0,0*0.555) -- (0,2*0.555)
(1,2*0.555) -- (1,4*0.555)
(2,4*0.555) -- (2,6*0.555)
(3,6*0.555) -- (3,7*0.555);
%label
\draw (3,6*0.555) node[anchor=west] {$v(s,x)$};

%Draw r^\lambda
%First: indication between other graphs
\node at (1.8,5.4*0.555) [circle,fill,magenta, inner sep = 0.8 pt]{};
\node at (1.8,4.0*0.555) [circle,fill,magenta, inner sep = 0.8 pt]{};
\draw[dotted,thick,magenta] (1.8,5.4*0.555) -- (2.3,5.4*0.555);
\draw[dotted,thick,magenta] (1.8,4.0*0.555) -- (2.3,4.0*0.555);
\draw[<->,magenta] (2.3,5.4*0.555) -- (2.3,4.0*0.555);
\draw (2.3,4.6*0.555) node[anchor=west] {\textcolor{magenta}{$r^\lambda(s,x)$}};

%Second: r^\lambda itself
\draw[thick, magenta] (-3.5,0*0.555) -- (-3,0*0.555);
\draw[thick, magenta, dotted] (-3,0*0.555) -- (-3,1*0.555);
\draw[thick, magenta] (-3,1*0.555) -- (-2.66,0*0.555) -- (-2,0*0.555);
\draw[thick, magenta, dotted] (-2,0*0.555) -- (-2,1*0.555);
\draw[thick, magenta] (-2,1*0.555) -- (-1.66,0*0.555) -- (-1,0*0.555);
blue, dotted] (-1,0*0.555) -- (-1,1*0.555);
\draw[thick, magenta] (-1,1*0.555) -- (-0.66,0*0.555) -- (-.33,0*0.555) -- (0,1*0.555);
\draw[thick, magenta, dotted] (0,1*0.555) -- (0,0*0.555);
\draw[thick, magenta] (0,0*0.555) -- (.33,0*0.555) -- (1,2*0.555);
\draw[thick, magenta, dotted] (1,2*0.555) -- (1,0*0.555);
\draw[thick, magenta] (1,0*0.555) -- (1.33,0*0.555) -- (2,2*0.555);
\draw[thick, magenta, dotted] (2,2*0.555) -- (2,0*0.555);
\draw[thick, magenta] (2,0*0.555) -- (2.33,0*0.555) -- (3,2*0.555);
\draw[thick, magenta, dotted] (3,2*0.555) -- (3,0*0.555);
\draw[thick, magenta] (3,0*0.555) -- (3.33,0*0.555) -- (3.5,0.5*0.555);
%label
\draw (-3,1*0.555) node[anchor=south] {\textcolor{magenta}{$r^\lambda(s,x)$}};

\end{tikzpicture}
    \vspace{-20pt}
    \caption{A plot of $\nu^\lambda(\cdot,x)$, $v(\cdot,x)$, and their difference $r^\lambda(\cdot,x)$ for the one-dimensional case ($m=1$) with $q^+=2$, $q^- = 1$, and $\lambda = 3$.}
    \label{fig:nu_star}
\end{figure}

\begin{lemma} \label{lemma:DRSIR_function_dual_expression}
Consider the DRSIR function $\mathcal{Q}^{\Prob_0}_\varepsilon$. If $\varepsilon^p \geq \sum_{i=1}^m \| q_i \|_\infty$, where $q_i=(q_i^+,q_i^-)$ then
\begin{align*}
    \mathcal{Q}_\varepsilon^{\Prob_0}(x) = \E^{\Prob_0}\big[\nu^{\|q\|_\infty}(\xi,x)\big] + \|q\|_\infty \cdot \varepsilon,
\end{align*}
where $\nu^{\|q\|_\infty}$ is defined as in \cref{eq:def_nu}.
\end{lemma}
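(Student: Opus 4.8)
The plan is to start from the strong-duality representation \cref{eq:Wass_DRSIR_strong_duality}, specialized to the case $p=1$ (so that $\varepsilon^p=\varepsilon$) considered throughout this subsection, and to show that under the stated lower bound on $\varepsilon$ the dual infimum over the multiplier $\lambda$ is attained at $\lambda=\|q\|_\infty$. Writing $h(\lambda):=\lambda\varepsilon+\E^{\Prob_0}\big[\nu^\lambda(\xi,x)\big]$ for the dual objective, we have $\mathcal{Q}_\varepsilon^{\Prob_0}(x)=\inf_{\lambda\in\R_+}h(\lambda)$. As already observed before \cref{eq:def_nu}, $h(\lambda)=+\infty$ for $\lambda<\|q\|_\infty$, so the infimum may be restricted to $\lambda\ge\|q\|_\infty$, where the explicit expression $\nu^\lambda=\bar{v}+r^\lambda$ from \cref{eq:def_nu}--\cref{eq:def_r_minus} is valid. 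Since substituting $\lambda=\|q\|_\infty$ into $h$ reproduces exactly the claimed formula, it remains only to prove optimality of this choice, i.e.\ that $h$ is nondecreasing on $[\|q\|_\infty,\infty)$. (Note that $h$ is in fact convex there, being a sum of a linear term and an expectation of $\lambda\mapsto\nu^\lambda(s,x)$, which is a supremum of affine functions of $\lambda$; this makes it enough to control the behaviour of $h$ near the left endpoint.)

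The main tool is a one-sided Lipschitz bound in $\lambda$. Fix $\|q\|_\infty\le\lambda<\lambda'$ and let $\bar{s}^{*}$ be a maximizer in \cref{eq:nu_sup} defining $\nu^{\lambda}(s,x)$. Since $\bar{s}^{*}$ is feasible in the supremum defining $\nu^{\lambda'}(s,x)$, we get $\nu^{\lambda'}(s,x)\ge\bar{v}(\bar{s}^{*},x)-\lambda'\|s-\bar{s}^{*}\|_1=\nu^{\lambda}(s,x)-(\lambda'-\lambda)\|s-\bar{s}^{*}\|_1$, whence $\nu^{\lambda}(s,x)-\nu^{\lambda'}(s,x)\le(\lambda'-\lambda)\|s-\bar{s}^{*}\|_1$. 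Integrating against $\Prob_0$ yields
\[
    h(\lambda')-h(\lambda)\;\ge\;(\lambda'-\lambda)\big(\varepsilon-\E^{\Prob_0}\big[\|\xi-\bar{s}^{*}\|_1\big]\big).
\]

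The crucial estimate is therefore a bound on the location of the maximizer $\bar{s}^{*}$. By separability \cref{eq:separability} the supremum in \cref{eq:nu_sup} decouples across coordinates, and for each $i$ the staircase $\bar{v}_i(\cdot,x_i)$ jumps up by $q_i^+$ (resp.\ $q_i^-$) at each grid point of $x_i+\Z$ to the right (resp.\ left). Because $\lambda\ge\|q\|_\infty\ge\|q_i\|_\infty$, moving $\bar{s}_i$ past more than the single nearest grid point can never be profitable, and crossing even that point is profitable only when it lies within distance $\|q_i\|_\infty/\lambda$ of $s_i$; hence any maximizer satisfies $|\xi_i-\bar{s}_i^{*}|\le\|q_i\|_\infty/\lambda$. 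Consequently $\E^{\Prob_0}\big[\|\xi-\bar{s}^{*}\|_1\big]\le\tfrac1\lambda\sum_{i=1}^m\|q_i\|_\infty$, a quantity that is largest at $\lambda=\|q\|_\infty$ and is rendered at most $\varepsilon$ there by the stated lower bound on $\varepsilon$. The displayed inequality then gives $h(\lambda')\ge h(\lambda)$ for all $\lambda'>\lambda\ge\|q\|_\infty$, so $\lambda=\|q\|_\infty$ minimizes $h$, and substituting back produces the asserted identity.

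I expect the maximizer-location step to be the main obstacle. One must justify that the supremum in \cref{eq:nu_sup} is attained (using upper semicontinuity of $\bar{v}$ and the fact that the penalty dominates the staircase growth once $\lambda\ge\|q\|_\infty$, making the effective feasible region bounded), verify the coordinatewise distance bound against the explicit branch description \cref{eq:def_r_plus}--\cref{eq:def_r_minus} including the threshold $x_i^\lambda$, and handle the measure-zero sets of grid points and branch boundaries where the piecewise formula changes. The remaining ingredients---separability, integrating the Lipschitz bound, and the final substitution $\lambda=\|q\|_\infty$---are routine.
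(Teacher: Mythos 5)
Your proposal is correct and follows essentially the same route as the paper: both restrict the dual \cref{eq:Wass_DRSIR_strong_duality} to $\lambda\ge\|q\|_\infty$ and show that increasing $\lambda$ beyond $\|q\|_\infty$ gains at most $(\lambda-\|q\|_\infty)\sum_{i}\|q_i\|_\infty/\|q\|_\infty$ in the expectation term while losing $(\lambda-\|q\|_\infty)\varepsilon$ in the linear term, so the hypothesis on $\varepsilon$ forces optimality at $\lambda=\|q\|_\infty$. The only difference is mechanical: the paper reads the pointwise bound $r_i^{\|q\|_\infty}-r_i^{\lambda}\le(\lambda-\|q\|_\infty)\|q_i\|_\infty/\|q\|_\infty$ directly off the explicit branch formulas \cref{eq:def_r_plus}--\cref{eq:def_r_minus}, whereas you derive the equivalent estimate via an envelope argument bounding the displacement of the maximizer in \cref{eq:nu_sup} (where you should say \emph{some} maximizer rather than \emph{any}, since at $\lambda=\|q_i\|_\infty$ ties permit distant maximizers); both yield the same inequality and your final normalization $\varepsilon\ge\sum_i\|q_i\|_\infty/\|q\|_\infty$ matches the one the paper itself invokes.
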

\begin{proof}
    It suffices to show that $\lambda = \| q \|_\infty$ is optimal in \eqref{eq:Wass_DRSIR_strong_duality}. Suppose that $\lambda = \| q \|_\infty + \delta$ with $\delta > 0$. We will compare the corresponding objective value with the objective value corresponding to $\lambda = \| q \|_\infty$. For the first term in the objective we have $\lambda \varepsilon^p - \| q \|_\infty \varepsilon^p = \delta \varepsilon^p \geq \delta \frac{ \sum_{i=1}^m \|q_i\|_\infty}{\| q \|_\infty}$, where the last inequality follows from the assumption on $\varepsilon^p$. 
    
    Next, we compare the second term in the objective. We have 
    \begin{align*}
        \int_{S} \nu^{\| q \|_\infty}(s,x) \Prob_0(ds) - \int_{S} \nu^\lambda(s,x) \Prob_0(ds) &= \int_{S} \big( \nu^{\| q \|_\infty}(s,x) - \nu^\lambda(s,x) \big) \Prob_0(ds) \\
        &= \int_{S} \big( r^{\| q \|_\infty}(s,x) - r^\lambda(s,x) \big) \Prob_0(ds).
    \end{align*}
    We have $r^{\| q \|_\infty}(s,x) - r^\lambda(s,x) = \sum_{i=1}^m \big( r_i^{\| q \|_\infty}(s_i,x_i) - r_i^\lambda(s_i,x_i)\big)$. Let $i=1,\ldots,m$, be given. Then, by definition of $r^\lambda_i$ (and using \cref{fig:nu_star}), it is not hard to see that
    \begin{align*}
        r_i^{\| q \|_\infty}(s_i,x_i) - r_i^\lambda(s_i,x_i)\big) &\leq \delta \|q_i\|_\infty/\|q\|_\infty. 
    \end{align*}
    Substituting this above yields
    \begin{align*}
        \int_{S} \nu^{\| q \|_\infty}(s,x) \Prob_0(ds) - \int_{S} \nu^\lambda(s,x) \Prob_0(ds) &\leq \delta \frac{ \sum_{i=1}^m \|q_i\|_\infty}{\| q \|_\infty}.
    \end{align*}
    Comparing the two terms, it follows that the objective function value for $\lambda = \| q \|_\infty + \delta$ is higher than for $\lambda = \| q \|_\infty$. Since $\delta > 0$ is chosen arbitrarily, it follows that $\lambda = \| q \|_\infty$ is optimal. This completes the proof.
\end{proof}

From \cref{lemma:DRSIR_function_dual_expression}, it follows that convexity of $\mathcal{Q}^{\Prob_0}_\varepsilon$ depends on convexity of \linebreak $\nu^{\| q \|_\infty}(\xi,\cdot)$. In \cref{fig:nu_star} we observe that in the one-dimensional case, the slope of $\nu^{\lambda}(\cdot,x)$ is either $0$, $\lambda$, or $-\lambda$, and we know that $\lambda \geq \max\{q^+, q^-\}$. From this fact, we have that $\nu^{\lambda}(\cdot,x)$ is convex if and only if $\lambda = \max\{q^+, q^-\}$ and \linebreak $q^+, q^- \in \{0, \max\{q^+, q^-\} \}$. By symmetry, $\nu^{\lambda}(\xi,\cdot)$ is convex under the exact same conditions. Generalizing this result to multiple dimensions, we obtain the following. 

\begin{theorem} \label{thm:DRSIR_convexity}
    Consider the DRSIR function $\mathcal{Q}_\varepsilon^{\Prob_0}$ from \cref{eq:Wass_DRSIR_problem_naive}. Then,  $\mathcal{Q}_\varepsilon^{\Prob_0}$ is convex if $q_i^+, q_i^- \in \{0, \|q\|_\infty \}$, $i=1,\ldots,m$, and $\varepsilon \geq m$.
\end{theorem}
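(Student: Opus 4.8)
The plan is to convert the convexity question into the coordinate-wise analysis of $\nu^\lambda$ already prepared in the discussion preceding the theorem, using the closed-form dual expression of \cref{lemma:DRSIR_function_dual_expression}. First I would verify that the radius hypothesis is strong enough to invoke that lemma: since $\|q_i\|_\infty \leq \|q\|_\infty$ for every $i$, the effective radius threshold governing \cref{lemma:DRSIR_function_dual_expression} is controlled by $m$, so (with $p=1$) the assumption $\varepsilon \geq m$ places us in the regime where
\begin{align*}
    \mathcal{Q}_\varepsilon^{\Prob_0}(x) = \E^{\Prob_0}\big[\nu^{\|q\|_\infty}(\xi,x)\big] + \|q\|_\infty \cdot \varepsilon
\end{align*}
holds for all $x$. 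The additive term $\|q\|_\infty \varepsilon$ is constant in $x$ and expectation preserves convexity, so it suffices to prove that $x \mapsto \nu^{\|q\|_\infty}(\xi,x)$ is convex for each fixed $\xi$.

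Next I would exploit separability of $\nu^\lambda$. By \cref{eq:def_nu} and separability of $v$, we have $\nu^{\|q\|_\infty}(\xi,x) = \sum_{i=1}^m \nu_i^{\|q\|_\infty}(\xi_i,x_i)$ with $\nu_i^\lambda := \bar{v}_i + r_i^\lambda$, so convexity reduces to convexity of each coordinate map $x_i \mapsto \nu_i^{\|q\|_\infty}(\xi_i,x_i)$. For each $i$ I would split into two cases. If $q_i^+ = q_i^- = 0$, then $v_i \equiv 0$ and, reading \cref{eq:def_r_plus}, $r_i^{\|q\|_\infty} \equiv 0$, so $\nu_i^{\|q\|_\infty} \equiv 0$ is trivially convex. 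Otherwise the hypothesis $q_i^+,q_i^- \in \{0,\|q\|_\infty\}$ forces $\max\{q_i^+,q_i^-\} = \|q\|_\infty$, i.e., the evaluation multiplier $\lambda = \|q\|_\infty$ coincides with the local cost magnitude $\|q_i\|_\infty$; then the one-dimensional slope analysis summarized just before the theorem (and illustrated in \cref{fig:nu_star}) applies verbatim, yielding convexity of $\nu_i^{\|q\|_\infty}(\cdot,x_i)$, and by symmetry of $\nu_i^{\|q\|_\infty}(\xi_i,\cdot)$, exactly when $q_i^+,q_i^- \in \{0,\|q\|_\infty\}$. Summing the coordinates gives convexity of $\nu^{\|q\|_\infty}(\xi,\cdot)$, hence of $\E^{\Prob_0}[\nu^{\|q\|_\infty}(\xi,\cdot)]$, and therefore of $\mathcal{Q}_\varepsilon^{\Prob_0}$.

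The main obstacle — and the reason the theorem requires $q_i^\pm \in \{0,\|q\|_\infty\}$ rather than merely $q_i^\pm \in \{0,\|q_i\|_\infty\}$ — is the mismatch between the \emph{global} multiplier $\lambda = \|q\|_\infty$ forced by \cref{lemma:DRSIR_function_dual_expression} and the \emph{local} threshold $\max\{q_i^+,q_i^-\}$ that governs convexity of each $\nu_i^\lambda$. The one-dimensional result guarantees convexity only when $\lambda = \max\{q_i^+,q_i^-\}$; for a coordinate with $\|q_i\|_\infty < \|q\|_\infty$ the global $\lambda$ would be strictly too large, so the ``valley'' structure of $\nu_i^\lambda$ (whose slopes lie in $\{0,\pm\lambda\}$) would reappear and destroy convexity. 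The hypothesis $q_i^\pm \in \{0,\|q\|_\infty\}$ is precisely what rules this out: every nontrivial coordinate attains the global maximum, reconciling the global and local thresholds so that the one-dimensional argument transfers cleanly. Establishing this reconciliation is the crux; everything else is routine preservation of convexity under summation and expectation.
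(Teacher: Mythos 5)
Your proof is correct and follows essentially the same route as the paper's: invoke the closed-form dual expression of \cref{lemma:DRSIR_function_dual_expression}, reduce via separability to the one-dimensional slope analysis of $\nu_i^{\|q\|_\infty}(\cdot,x_i)$, transfer convexity in $s_i$ to convexity in $x_i$ by symmetry, and conclude by summing coordinates and taking expectations. Your explicit case split for coordinates with $q_i^+ = q_i^- = 0$ and your check that $\varepsilon \geq m$ suffices for the dual expression are slightly more careful than, but fully consistent with, the paper's own argument.
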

\begin{proof}
    Let $i=1,\ldots,m$, be given and fix $x_i \in \R$. We first derive conditions under which $\nu^\lambda_i(\cdot,x_i)$ is convex. Using the fact that $\lambda \geq \| q \|_\infty$, we see that $\nu^\lambda_i(\cdot,x_i)$ is convex on $[x^\lambda_i, \infty)$ if $q^+_i \in \{0,\lambda\}$, and moreover, it is non-decreasing on that interval. Similarly, $\nu^\lambda_i(\cdot,x_i)$ is convex on $(-\infty, x^\lambda_i]$ if $q^-_i \in \{0,\lambda\}$, and is non-increasing on that interval. Combining these facts with the observation that $\nu^\lambda_i(\cdot,x_i)$ is continuous, we find that $\nu^\lambda_i(\cdot,x_i)$ is convex if $q^+_i, q^-_i \in \{0,\lambda\}$. By symmetry, it follows that for any fixed $s_i \in \R$,  $\nu^\lambda_i(s_i,\cdot)$ is convex if $q^+_i, q^-_i \in \{0,\lambda\}$. Since $\nu^\lambda(s,x) = \sum_{i=1}^m \nu^\lambda_i(s_i,x_i)$, it follows that $\nu^\lambda(s,\cdot)$ is convex if $q^+_i, q^-_i \in \{0,\lambda\}$ for every $i=1,\ldots,m$. Finally, substituting $\lambda = \| q \|_\infty$, using \cref{lemma:DRSIR_function_dual_expression}, and using the fact that the expected value of convex functions is convex, it follows that $\mathcal{Q}^{\Prob_0}_\varepsilon$ is convex provided $q^+_i, q^-_i \in \{0,\| q \|_\infty\}$, $i=1,\ldots,m$.
\end{proof}

So surprisingly, \cref{thm:DRSIR_convexity} shows that there are conditions under which Wasserstein DRSIR is in fact convex, despite the integer restrictions in the second stage. These conditions include the important special case of the single-product newsvendor problem, for which we can find an analytical solution, as presented in the following example.

\begin{example} \label{ex:newsvendor_problem}
    Consider the Wasserstein DRSIR problem \cref{eq:Wass_DRSIR_problem_naive} with $p=1$, $m=1$, $X = \R$, $q^+ > c > 0$, $q^- = 0$, and $\varepsilon \geq 1$. Then, by \cref{thm:DRSIR_convexity} it follows that $\mathcal{Q}^{\Prob_0}_\varepsilon(x) = \E^{\Prob_0}\big[ q^+(\xi - x + 1)^+\big] + q^+ \varepsilon$. Thus an optimal solution to \cref{eq:Wass_DRSIR_problem_naive} is given by $x^* := \min_{z \in \R}\big\{ \Prob_0\{ \xi \leq z \} \geq c/q^+\big\}$. \qedexample
\end{example}

Nevertheless, for multi-dimensional problems, the conditions in \cref{thm:DRSIR_convexity} are quite restrictive. In particular, we require that all $2m$ elements of $q$ can only have one common positive value and that $\varepsilon$ is rather large. While \cref{thm:DRSIR_convexity}'s conditions are not necessary for $\mathcal{Q}_\varepsilon^{\Prob_0}$ to be convex, we do not expect convexity to hold if the hypotheses of the theorem are not satisfied. Specifically, if $\varepsilon \geq m$, but $q^{\pm}_i \notin \{0,\| q \|_\infty\}$ for some $q$-coefficient, then by \cref{fig:nu_star}, the function $\nu^\lambda$ will be non-convex in the corresponding direction. Hence, we expect $\mathcal{Q}_\varepsilon^{\Prob_0}(x) = \E^{\Prob_0}\big[\nu^{\|q\|_\infty}(\xi,x)\big] + \|q\|_\infty \cdot \varepsilon$ to be non-convex too, even though exceptions occur for very particular choices of $\Prob_0$. We illustrate the general behavior of $\mathcal{Q}^{\Prob_0}_\varepsilon$ by two examples in \cref{fig:convexity}. As a result we infer that, generally speaking, we expect the DRSIR problem to be \textit{non-convex}.

\begin{figure}%
    \centering
    \subfloat[\centering $q^+ = 2$, $q^- = 0$]{{\includegraphics[scale=0.6921]{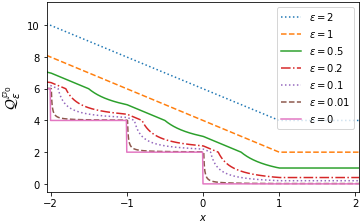} }}\label{fig:convexity:one-sided}% 
    %\quad
    \subfloat[\centering $q^+ = 2$, $q^- = 1$]{{\includegraphics[scale=0.6921]{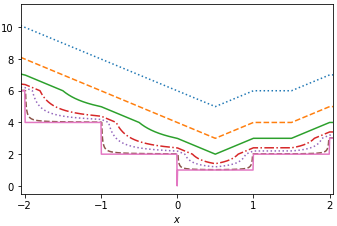} }}\label{fig:convexity:two-sided}%
    \caption{Plots of the one-dimensional Wasserstein DRSIR function $\mathcal{Q}_\varepsilon^{\Prob_0}$ under the distribution $\Prob_0\{\xi = 0\} = 1$.}%
    \label{fig:convexity}%
\end{figure}

\section{Pragmatic Wasserstein DRSIR} \label{sec:pragmatic_Wass_DRSIR}

We now introduce our pragmatic approach that aims to address the non-convexity issues of standard DRSIR outlined in the previous section. We first outline the main approach and then apply it to a
setting with a Wasserstein-based uncertainty set. In \cref{sec:pragmatic_moment_DRSIR} we show that the same approach works for (generalized) moment-based DRSIR problems as well.

\subsection{Defining the pragmatic uncertainty set} \label{subsec:defining_pragmatic_U}

The main idea in our pragmatic approach for SIR is to model the distributional uncertainty slightly differently than traditional approaches, in order to gain convexity. We start with an arbitrary uncertainty set $\mathcal{U}$ that is interesting from a modeling perspective, which could be a typical uncertainty set, e.g., a Wasserstein ball or a moment-based set. Then, we transform $\mathcal{U}$ to a structurally similar pragmatic uncertainty set $\bar{\mathcal{U}}$, which results in a \textit{convex} model. In particular, we construct $\bar{\mathcal{U}}$ so that it is a subset of $\mathcal{C}$, the set of ``convexifying'' distributions from \cref{lemma:SIR_convexity_C}, yielding a pragmatic DRSIR function that is convex.

\begin{lemma} \label{lemma:Q_U_convex}
Suppose that $\bar{\mathcal{U}} \subseteq \mathcal{C}$, where $\mathcal{C}$ is defined in \cref{lemma:SIR_convexity_C}. Then, the DRSIR function
\begin{align*}
    \mathcal{Q}_{\bar{\mathcal{U}}}(x) := \sup_{\Prob \in \bar{\mathcal{U}}} \E^{\Prob}\big[v(\xi,x)\big], \quad x \in \R^m,
\end{align*}
is convex in $x$.
\end{lemma}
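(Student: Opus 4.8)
The plan is to recognize $\mathcal{Q}_{\bar{\mathcal{U}}}$ as a pointwise supremum, over $\Prob \in \bar{\mathcal{U}}$, of the individual expected recourse functions $Q^{\Prob}(x) = \E^{\Prob}\big[v(\xi,x)\big]$, and then to invoke the elementary fact that a pointwise supremum of convex functions is convex. Thus the argument splits naturally into two parts: first establish convexity of each $Q^{\Prob}$, and then handle the supremum.

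For the first part, I would note that by hypothesis $\bar{\mathcal{U}} \subseteq \mathcal{C}$, so every $\Prob \in \bar{\mathcal{U}}$ lies in $\mathcal{C}$. \Cref{lemma:SIR_convexity_C} then immediately yields that $Q^{\Prob}$ is convex in $x$ (this is precisely the ``if'' direction of that equivalence). The only caveat is the standing hypothesis $q^+ + q^- > 0$ appearing in \cref{lemma:SIR_convexity_C}; in the degenerate case $q = 0$ we have $v \equiv 0$, so every $Q^{\Prob}$ is identically zero and hence trivially convex, and the conclusion holds regardless.

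For the second part, since $\mathcal{Q}_{\bar{\mathcal{U}}}(x) = \sup_{\Prob \in \bar{\mathcal{U}}} Q^{\Prob}(x)$, I would verify convexity directly from the definition: for $x_1, x_2 \in \R^m$ and $\alpha \in [0,1]$, convexity of each $Q^{\Prob}$ gives $Q^{\Prob}(\alpha x_1 + (1-\alpha) x_2) \le \alpha Q^{\Prob}(x_1) + (1-\alpha) Q^{\Prob}(x_2) \le \alpha \mathcal{Q}_{\bar{\mathcal{U}}}(x_1) + (1-\alpha)\mathcal{Q}_{\bar{\mathcal{U}}}(x_2)$, where the second inequality uses $Q^{\Prob}(x_j) \le \mathcal{Q}_{\bar{\mathcal{U}}}(x_j)$ for each $\Prob \in \bar{\mathcal{U}}$. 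Taking the supremum over $\Prob \in \bar{\mathcal{U}}$ on the left-hand side then delivers $\mathcal{Q}_{\bar{\mathcal{U}}}(\alpha x_1 + (1-\alpha)x_2) \le \alpha \mathcal{Q}_{\bar{\mathcal{U}}}(x_1) + (1-\alpha)\mathcal{Q}_{\bar{\mathcal{U}}}(x_2)$, which is the desired convexity inequality.

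I do not anticipate a genuine obstacle here, since the statement is essentially a repackaging of \cref{lemma:SIR_convexity_C} together with the ``supremum of convex functions is convex'' principle. The only point requiring mild care is making sure the functions $Q^{\Prob}$ are well defined so that the supremum and the convexity inequalities make sense; this is fine because $v(\cdot,x) \in \mathcal{G}(\R^m)$ and the measures in $\bar{\mathcal{U}} \subseteq \mathcal{P}(\R^m)$ integrate every function in $\mathcal{G}(\R^m)$ by \cref{def:measure_theory}. Even in the event that $\mathcal{Q}_{\bar{\mathcal{U}}}$ attains the value $+\infty$ at some points, the same chain of inequalities still establishes convexity in the extended-real-valued sense.
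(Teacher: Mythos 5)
Your proposal is correct and follows essentially the same route as the paper's proof: each $Q^{\Prob}$ with $\Prob \in \bar{\mathcal{U}} \subseteq \mathcal{C}$ is convex by \cref{lemma:SIR_convexity_C}, and the pointwise supremum of convex functions is convex. Your additional remarks on the degenerate case $q^+ + q^- = 0$ and on well-definedness of the expectations are sensible refinements that the paper leaves implicit, but they do not change the argument.
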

\begin{proof}
    For every $\Prob \in \bar{\mathcal{U}}$, we have $\Prob \in \mathcal{C}$ and thus, by \cref{lemma:SIR_convexity_C}, $\E^{\Prob}\big[v(\xi,x)\big]$ is convex as a function of $x$. So $\mathcal{Q}_{\bar{\mathcal{U}}}$ is the pointwise supremum of convex functions, which is convex.
\end{proof}

How exactly do we transform $\mathcal{U}$ to a subset of $\mathcal{C}$? From \cref{subsec:convexity_result}, two approaches arise: intersecting $\mathcal{U}$ with $\mathcal{C}$ or transforming $\mathcal{U}$ by the UIMA transformation $\Gamma$, i.e.,
\begin{align*}
    \tilde{\mathcal{U}} := \mathcal{U} \cap \mathcal{C} \qquad \text{or} \qquad \hat{\mathcal{U}} := \Gamma(\mathcal{U}). %\label{eq:def_U_tilde_U_hat}
\end{align*}
While in principle these approaches can apply to any initial uncertainty set, in what follows we restrict attention to two special cases: Wasserstein-based uncertainty sets (this section) and generalized moment-based uncertainty sets (\cref{sec:pragmatic_moment_DRSIR}).

We now restrict ourselves to the setting in which the initial DRSIR model has an uncertainty set that is a Wasserstein ball around an empirical distribution: $\mathcal{U} := \mathcal{B}_\varepsilon(\Prob_0)$. We start by showing that both $\tilde{\mathcal{U}}$ and $\hat{\mathcal{U}}$ are quantifiably ``close'' to the initial set $\mathcal{U}$.

\begin{lemma} \label{lemma:Wass_dist_max_m/4}
Let $\Prob \in \mathcal{P}(\R^m)$ be given, $\mathcal{C}$ be defined in \cref{lemma:SIR_convexity_C}, $\Gamma$ be specified in \cref{def:Gamma}, and $W_p$ be given by \cref{def:Wasserstein_dist}. Then, 
\begin{align*}
    \inf_{\tilde{\Prob} \in \mathcal{C}} W_p(\Prob,\tilde{\Prob}) \leq W_p(\Prob,\Gamma(\Prob)) \leq m/4.
\end{align*}
\end{lemma}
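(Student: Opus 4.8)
The plan is to treat the two inequalities separately, the left one being essentially a definitional observation and the right one the real content. For the left inequality, note that by \cref{remark:Gamma:details} we have $\Gamma(\mathcal{P}(\R^m)) \subset \mathcal{C}(\R^m)$, so $\Gamma(\Prob)$ is itself a feasible point of the infimum over $\mathcal{C}$; hence $\inf_{\tilde{\Prob} \in \mathcal{C}} W_p(\Prob, \tilde{\Prob}) \leq W_p(\Prob, \Gamma(\Prob))$ with no further work.

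For the right inequality I would avoid solving the optimal-transport problem and instead exhibit one explicit coupling. The starting point is to reinterpret $\Gamma(\Prob)$ probabilistically via the $m$-dimensional analogue of \cref{prop:properties_gamma}: statement \eqref{prop:properties_gamma:xi_integral} says that in one dimension $\gamma(\bar{\Prob})$ has density equal to the convolution of $\bar{\Prob}$ with the uniform density on $(-1/2,1/2)$, and applying this coordinatewise in \cref{def:Gamma} identifies $\Gamma(\Prob)$ with the law of $\xi + U$, where $\xi \sim \Prob$ and $U = (U_1,\dots,U_m)$ has independent coordinates, each uniform on $(-1/2,1/2)$, independent of $\xi$. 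I would then let $\pi$ be the joint law of the pair $(\xi, \xi+U)$; its marginals are $\Prob$ and $\Gamma(\Prob)$, so $\pi \in \Pi(\Prob,\Gamma(\Prob))$ is admissible in \cref{def:Wasserstein_dist}. Since the two coordinates of $\pi$ differ exactly by $U$, this yields
\begin{align*}
    W_p^p(\Prob,\Gamma(\Prob)) \leq \int \|s-\bar{s}\|_p^p \, d\pi(s,\bar{s}) = \E\big[\|U\|_p^p\big] = \sum_{i=1}^m \E\big[|U_i|^p\big].
\end{align*}

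A one-line computation gives $\E[|U_i|^p] = \int_{-1/2}^{1/2}|u|^p\,du = \tfrac{1}{(p+1)2^p}$, so for $p=1$ this equals $\tfrac14$ and we obtain $W_1^1(\Prob,\Gamma(\Prob)) \leq m/4$, which is the stated bound; note also that because the perturbation $U$ is bounded, the estimate holds for every $\Prob \in \mathcal{P}(\R^m)$ with no moment assumptions. The only steps requiring care are checking that $\pi$ is a genuine element of $\Pi(\Prob,\Gamma(\Prob))$ and the convolution identification of $\Gamma(\Prob)$, neither of which is deep.

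The one genuine subtlety I anticipate concerns the exponent: the coupling delivers the constant $\big(m/(2^p(p+1))\big)^{1/p}$, which coincides with $m/4$ exactly at $p=1$ (the regime in which the convexity analysis of \cref{sec:wasserstein_DRSIR} was carried out) but exceeds $m/4$ for $p>1$ (for instance $\Prob = \delta_0$, $m=1$, $p=2$ already gives $W_2 = 1/\sqrt{12} > 1/4$). So I expect the clean bound $m/4$ to be, at bottom, a statement about the $p=1$ case, and I would foreground that specialization rather than claim the constant $m/4$ for all $p$.
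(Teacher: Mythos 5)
Your proof is correct and follows essentially the same route as the paper's: the left inequality is the observation that $\Gamma(\Prob)$ is itself feasible for the infimum over $\mathcal{C}$, and the right inequality uses exactly the coupling the paper constructs, namely the joint law of $(\xi,\xi+U)$ with $U$ uniform on $(-1/2,1/2)^m$ and independent of $\xi$ (the paper encodes this same transport plan via the conditional density $f_{\bar{S}|S}$). Your caveat about the exponent is well spotted, and it is in fact a point where the paper's own proof is imprecise: the paper bounds $\int \|s-\bar{s}\|_1\,d\pi$, which controls $W_1$ only, so the stated constant $m/4$ is at bottom a $p=1$ statement; your example $\Prob=\delta_0$, $m=1$, $p=2$, for which $W_2(\Prob,\Gamma(\Prob))=1/\sqrt{12}>1/4$, confirms that the constant genuinely fails for $p>1$, where the coupling instead yields $\bigl(m/(2^p(p+1))\bigr)^{1/p}$ as you compute. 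Since the lemma is only invoked downstream with $p=1$ (e.g., in \cref{thm:DRSIR_bounds}), nothing in the paper breaks, but foregrounding that specialization, as you do, is the right call.
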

\begin{proof} 
    The first inequality holds since $\Gamma(\Prob) \in \mathcal{C}$ by definition of $
    \mathcal{C}$ if $m=1$ and by \cref{lemma:Gamma_inclusions} of \cref{appendix:Gamma} if $m > 1$. For the second inequality, let $\pi$ be defined through the continuous conditional distribution $\pi_{\bar{S} | S}$ with conditional pdf $f_{\bar{S} | S}$, defined as
    \begin{align*}
        f_{\bar{S} | S}(s | \bar{s}) = \begin{cases}
        1 &\text{if } \bar{s} \in s + (-1/2, 1/2)^m, \\
        0 &\text{otherwise.}
        \end{cases}
    \end{align*}
    Then clearly, $\pi \in \Pi(\Prob, \Gamma(\Prob))$. Hence,  writing $\pi_S := \proj_S \pi$, we have:
    \begin{align*}
        W_p(\Prob,\Gamma(\Prob)) & \hspace{-2pt} \leq \int_{S \times \bar{S}} \| s - \bar{s} \|_1 \pi(ds, d\bar{s}) = \int_{S} \int_{\bar{S}} \| s - \bar{s} \|_1 \pi_{\bar{S}|S}(d\bar{s}|s) \pi_S(ds) \\
        & \hspace{-2pt} = \int_{S} \int_{\bar{S}} \| s - \bar{s} \|_1 f_{\bar{S}|S}(\bar{s}|s) d\bar{s} \pi_S(ds) = \int_{S} \int_{s + (-1/2, 1/2)^m}  \| s - \bar{s} \|_1 d\bar{s} \, \pi_S(ds) \\
        & \hspace{-2pt} = \int_{S} \Big( \sum_{i=1}^m \int_{s_i - 1/2}^{s_i + 1/2}  | s_i - \bar{s}_i | d\bar{s}_i \Big)  \pi_S(ds) = \int_{S} m \cdot \frac{1}{4}  \pi_S(ds) = m/4.
    \end{align*}
\end{proof}

From the first inequality in \cref{lemma:Wass_dist_max_m/4}---and from knowing that $\Gamma$ smears probability mass locally within unit intervals---we learn that for any $\Prob \in \mathcal{P}(\R^m)$ there is always some $\tilde{\Prob} \in \mathcal{C}$ close to $\Prob$. Hence, all distributions that we ``cut off'' when we intersect $\mathcal{B}_\varepsilon(\Prob_0)$ with $\mathcal{C}$ can be approximated reasonably well by a distribution in $\tilde{\mathcal{U}}$. Similarly, by the second inequality in \cref{lemma:Wass_dist_max_m/4}, any distribution in $\mathcal{B}_\varepsilon(\Prob_0)$ can be approximated reasonably well by a distribution in $\hat{\mathcal{U}}$ in the sense made precise by the lemma's bound, $m/4$. So if the original Wasserstein ball is a reasonable choice for the uncertainty set from a modeling perspective, then one can argue that $\tilde{\mathcal{U}}$ and $\hat{\mathcal{U}}$ are as well, at least approximately.

Which of the two options, $\tilde{\mathcal{U}}$ or $\hat{\mathcal{U}}$, is preferable is not clear a priori. The set $\tilde{\mathcal{U}}$ has the advantage that we know that $\tilde{\mathcal{U}} \subseteq \mathcal{U}$, so this approach is less conservative than standard Wasserstein DRSIR. However, a drawback is the fact that we cannot guarantee that $\tilde{\mathcal{U}}$ is non-empty if $\varepsilon < m/4$. Conversely, while ensuring convexity, the set $\hat{\mathcal{U}}$ is not necessarily a subset of $\mathcal{U}$, so it is not clear whether this approach is less or more conservative than standard Wasserstein DRSIR. However, we can ensure that $\hat{\mathcal{U}}$ is non-empty.

In \cref{subsec:primal/dual_formulations} we argue that the set $\hat{\mathcal{U}}$ is preferable from a pragmatic point of view because it results in a simpler, tractable optimization problem. Nevertheless, we also present the analysis for $\tilde{\mathcal{U}}$ because it provides valuable insight. In particular, it highlights how explicit inclusion of the constraint $\Prob \in \mathcal{C}$ from $\tilde{\mathcal{U}}: = \mathcal{U} \cap \mathcal{C}$ complicates the resulting model. This issue may carry over to settings beyond Wasserstein DRSIR, though not always as we show in \cref{sec:pragmatic_moment_DRSIR}.

\subsection{Dual formulations and tractability} \label{subsec:primal/dual_formulations}

We now derive dual formulations for the pragmatic DRSIR functions
\begin{align}
    \tilde{\mathcal{Q}}^{\Prob_0}_\varepsilon(x) := \sup_{\Prob \in \tilde{\mathcal{U}}}  \E^{\Prob}\big[v(\xi,x)\big], \quad \text{and} \quad  \hat{\mathcal{Q}}^{\Prob_0}_\varepsilon(x) := \sup_{\Prob \in \hat{\mathcal{U}}}  \E^{\Prob}\big[v(\xi,x)\big], \quad  x \in \R^m, \label{eq:def_Q_tilde_and_hat}
\end{align}
and use these to investigate tractability of the resulting pragmatic DRSIR problems. First, for $\tilde{\mathcal{U}}$ we show that, despite convexity of the resulting $\tilde{\mathcal{Q}}^{\Prob_0}_\varepsilon$ it still suffers from tractability issues due to the explicit inclusion of the constraint $\Prob \in \mathcal{C}$. Next, we show that $\hat{\mathcal{U}}$ does yield a tractable model and we sketch a row generation algorithm that may be used to solve the resulting pragmatic DRSIR model. Moreover, we show that if $p=1$, the pragmatic DRSIR model reduces to a continuous recourse problem under a known distribution.

In what follows, we use the fact that by an analogue of \cref{prop:separability_Q}, both $\tilde{\mathcal{Q}}^{\Prob_0}_\varepsilon$ and $\hat{\mathcal{Q}}^{\Prob_0}_\varepsilon$ are separable. Hence, we restrict ourselves to the one-dimensional case ($m=1$).

\subsubsection{Option 1: $\tilde{\mathcal{U}}$} \label{subsubsec:U_tilde}

First, we consider the pragmatic uncertainty set $\tilde{\mathcal{U}} := \mathcal{B}_\varepsilon(\Prob_0) \cap \mathcal{C}$. We will show that this choice does not yield a tractable model, despite convexity of $\tilde{\mathcal{Q}}^{\Prob_0}_\varepsilon$. For this purpose, we consider the special case where $\Prob_0$ is discrete on a finite set $\Xi^N := \{ \xi^1, \ldots, \xi^N\}$ of points, with $p^k := \Prob_0\{\xi = \xi^k\}$, $k=1,\ldots,N$. We derive a primal-dual pair for this setting. To this end, we make a few observations.

First, since any $\Prob \in \mathcal{C}$ is continuous, we can express any transportation plan $\pi \in \Pi(\Prob_0, \Prob)$ in terms of (scaled) conditional pdfs, $f^k$, $k =1,\ldots,N$, where $f^k$ is 
%the density 
associated with $\proj_{\bar{S}} \pi(\xi^k, \cdot)$, $k=1,\ldots,N$. Note that $f^k$ is a function from $\R$ to $\R_+$, integrating to $p^k$, and that $f := \sum_{k=1}^N f^k$ is the pdf associated with $\Prob$.
Second, the requirement $\Prob \in \mathcal{C}$ is equivalent to
\begin{align*}
    f(s) = \bar{F}(s + 1/2) - \bar{F}(s - 1/2), \quad s \in \R,
\end{align*}
for some cdf $\bar{F} \in \mathcal{F}(\R)$. In order to obtain an optimization problem over \textit{density functions} only, we represent the cdf $\bar{F}$ by its corresponding pdf $\bar{f}$; i.e., we assume that $\bar{F}$ is
differentiable almost everywhere.
Finally, by \cref{cor:QP=QhatPbar}, we can write the objective in terms of $\hat{v}$ and $\bar{f}$, rather than $v$ (or $\bar{v}$) and $f$. Together, these observations allow us to write
\begin{align*}
    \tilde{\mathcal{Q}}^{\Prob_0}_\varepsilon(x) = \sup_{f^k, \bar{f} \in \mathcal{G}_+(\R)} \Big\{ &\int_{\R} \hat{v}(s,x)\bar{f}(s)ds & & & \\
        \text{s.t.} \ \ \ &\sum_{k=1}^N \int_{\R} | s - \xi^k |^p f^k(s) ds \leq \varepsilon^p, & & \qquad [\lambda]  & \\
        &\int_{\R} f^k(s) ds = p^k, &\forall k,& \qquad  [\nu^k] & \\
        &\sum_{k=1}^N f^k(s) = \int_{s - 1/2}^{s + 1/2} \bar{f}(t)dt, &\forall s, & \qquad  [\mu(s)] & \Big\}.  
\end{align*}
The corresponding dual problem is given by 
\begin{align}
    \inf_{\lambda \in \R_+, \ \nu \in \R^N, \ \mu \in \mathcal{G}(\R)} \Big\{ &\lambda \varepsilon^p + \sum_{k=1}^N p^k \nu^k  & & & \\ \nonumber
        \text{s.t.} \ \ \ &\lambda | s - \xi^k |^p + \nu^k - \mu(s) \geq 0, &\forall s, \ \forall k & \qquad [f^k(s)]  & \\ \nonumber
        &\int_{s - 1/2}^{s + 1/2} \mu(s) ds \geq \hat{v}(s, x), &\forall s, & \qquad  [\bar{f}(s)] & \Big\}. \label{eq:Q_tilde_dual}
\end{align}
Both the primal and the dual problems have \textit{functions} as decision variables, and as a result, solving the corresponding pragmatic DRSIR problem is nontrivial, even though it is convex. In particular, the dual problem contains the function $\mu(\cdot)$, corresponding to the primal constraint $\Prob \in \mathcal{C}$. Hence, as we allude to above, the fact that we need to explicitly include the constraint $\Prob \in \mathcal{C}$ in $\tilde{\mathcal{U}}$ makes tractability difficult.

\subsubsection{Option 2: $\hat{\mathcal{U}}$} \label{subsubsec:U_hat}

Next, we consider the pragmatic uncertainty set $\hat{\mathcal{U}} := \Gamma\big( \mathcal{B}_\varepsilon(\Prob_0) \big)$. We show that this uncertainty set does lead to a tractable model. Using \cref{cor:QP=QhatPbar} we can write 
\begin{align*}
    \hat{\mathcal{Q}}^{\Prob_0}_\varepsilon(x) &= \sup_{\bar{\Prob} \in \mathcal{B}_\varepsilon(\Prob_0)} \E^{\bar{\Prob}}\big[ \hat{v}(\xi,x) \big].
\end{align*}
We can derive a dual formulation along the lines of \cref{lemma:strong_duality}.

\begin{theorem} \label{thm:Q_hat_dual}
    Consider the pragmatic Wasserstein DRSIR function $\hat{\mathcal{Q}}^{\Prob_0}_\varepsilon$. Then,
    \begin{align*}
        \hat{\mathcal{Q}}^{\Prob_0}_\varepsilon(x) &= \inf_{\lambda \in \R_+, \ \nu \in \mathcal{G}_+(S)} \Big\{ \lambda \varepsilon^p + \int_{S}  \nu(s) \Prob_0(ds) \ \Big|  \\
        &\qquad\qquad\qquad\qquad\qquad \nu(s) \geq \hat{v}(\bar{s},x) - \lambda \| s - \bar{s}\|_p^p, \quad  (s,\bar{s}) \in S \times \bar{S}  \Big\}.
    \end{align*}
\end{theorem}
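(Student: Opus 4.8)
The plan is to establish this dual formulation for $\hat{\mathcal{Q}}^{\Prob_0}_\varepsilon$ by essentially reapplying the machinery of \cref{lemma:strong_duality}, but with the non-convex value function $v$ replaced by its convexified counterpart $\hat{v}$. The key observation enabling this is \cref{cor:QP=QhatPbar} (via \cref{lemma:QP=QhatPbar_Gamma}), which lets us rewrite the supremum over $\hat{\mathcal{U}} = \Gamma(\mathcal{B}_\varepsilon(\Prob_0))$ as a supremum over the original ball $\mathcal{B}_\varepsilon(\Prob_0)$ of the transformed integrand $\hat{v}$:
\begin{align*}
    \hat{\mathcal{Q}}^{\Prob_0}_\varepsilon(x) = \sup_{\bar{\Prob} \in \mathcal{B}_\varepsilon(\Prob_0)} \E^{\bar{\Prob}}\big[ \hat{v}(\xi,x) \big].
\end{align*}
This is exactly the same structure as the standard Wasserstein DRSIR problem \cref{eq:Wass_DRSIR_problem_naive}, but with $\hat{v}$ in place of $v$. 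First, I would verify that $\hat{v}(\cdot,x) \in \mathcal{G}(\R^m)$ so that the problem is well-defined in the sense of \cref{def:measure_theory}; this follows from the explicit piecewise-affine expression for $\hat{v}_i$ in \cref{cor:QP=QhatPbar}, which grows at most linearly.

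Next, I would retrace the primal-dual derivation preceding \cref{lemma:strong_duality}. Reparametrizing the supremum over $\bar\Prob$ as an optimization over transportation plans $\pi \in \Pi(\Prob_0, \bar\Prob)$ yields a primal infinite-dimensional LP identical to \cref{eq:Wass_DRSIR_problem_primal} except with $\hat{v}$ replacing $\bar{v}$, and dualizing the mass-conservation and Wasserstein-budget constraints gives precisely the claimed dual with constraint $\nu(s) \geq \hat{v}(\bar{s},x) - \lambda\|s-\bar{s}\|_p^p$. A subtlety worth noting is that in \cref{lemma:strong_duality} the value function first had to be replaced by its upper semi-continuous envelope $\bar{v}$ (via \cref{lemma:v_usc}) before strong duality from \cite{gao2016distributionally} could be invoked; here this step is unnecessary, since by \cref{ex:UIMA_SIR} the transformed function $\hat{v}(\cdot,x)$ is already piecewise affine and continuous, hence trivially upper semi-continuous and equal to its own envelope. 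This is an \emph{advantage} of the pragmatic formulation, not an obstacle.

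The main technical point is therefore to invoke Theorem~1 of \cite{gao2016distributionally} to conclude strong duality between the primal LP and its dual, which requires upper semi-continuity of the integrand; as just noted, continuity of $\hat{v}(\cdot,x)$ supplies this immediately. A further refinement relative to \cref{lemma:strong_duality} is the restriction $\nu \in \mathcal{G}_+(S)$ (nonnegativity) appearing in the statement. I would justify this by observing that $\hat{v} \geq 0$ (it is a sum of nonnegative terms $q_i^+(\cdot)^+ + q_i^-(\cdot)^-$), so the right-hand side $\hat{v}(\bar{s},x) - \lambda\|s-\bar{s}\|_p^p$ attains a nonnegative value at $\bar{s}=s$, forcing the optimal $\nu$ to be nonnegative; alternatively one eliminates $\nu$ pointwise as in \cref{lemma:strong_duality}, taking $\nu(s) = \sup_{\bar s}\{\hat v(\bar s,x) - \lambda\|s-\bar s\|_p^p\} \geq \hat v(s,x) \geq 0$.

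The hard part, if any, is purely bookkeeping: confirming that the hypotheses of the cited strong-duality theorem are met under the growth and measurability conditions of \cref{def:measure_theory} and that the reparametrization in terms of $\pi$ is valid. I do not anticipate genuine difficulty, precisely because the convexification via $\Gamma$ has already removed the non-concavity and non-smoothness of the integrand that complicated \cref{eq:nu_sup}; the continuity of $\hat{v}$ makes the duality argument strictly cleaner than its counterpart for the standard DRSIR function.
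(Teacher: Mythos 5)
Your proposal is correct and follows essentially the same route as the paper: rewrite $\hat{\mathcal{Q}}^{\Prob_0}_\varepsilon(x)$ as $\sup_{\bar{\Prob} \in \mathcal{B}_\varepsilon(\Prob_0)} \E^{\bar{\Prob}}[\hat{v}(\xi,x)]$ via \cref{cor:QP=QhatPbar} (and \cref{lemma:QP=QhatPbar_Gamma}), then invoke Theorem~1 of \cite{gao2016distributionally}, which applies because $\hat{v}(\cdot,x)$ is continuous and hence upper semi-continuous. Your additional remarks on the dispensability of the usc-envelope step and the nonnegativity of $\nu$ are accurate elaborations of what the paper leaves implicit.
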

\begin{proof}
    The result follows from Theorem~1 in \cite{gao2016distributionally}, which holds since $\hat{v}(\cdot,x)$ is upper semi-continuous for every $x \in \R^m$.
\end{proof}
Observe that compared with the dual problem from \cref{eq:Q_tilde_dual} corresponding to $\tilde{U}$, the dual problem of~\cref{thm:Q_hat_dual} does not have the function $\mu(\cdot)$; rather, it is replaced by $\mu(s) = \hat{v}(s,x)$. Moreover, the dual variable $\nu$ admits the solution
$\nu(s) = \sup_{\bar{s} \in \bar{S}} \big\{ \hat{v}(\bar{s},x) - \lambda \| s - \bar{s} \|_p^p \big\}$. Observing that $\hat{v}(\bar{s},x) = \max\big\{ q^+(s - x + 1/2)^+, q^+(s - x + 1/2)^+ + q^-(s - x - 1/2)^-, q^-(s - x - 1/2)^- \big\}$, we can rewrite the maximization representation of $\nu(s)$ as the maximum over three \textit{concave} maximization problems, showing that the dual problem above is indeed tractable.

We can envisage a natural row generation-based algorithm, with a master problem of the form
\begin{align*}
    \inf_{\substack{x \in X \\ \lambda \in \R_+ \\ \nu \in \mathcal{G}_+(S)}} \Big\{ c^Tx + \lambda \varepsilon^p + \int_{S}  \nu(s) \Prob_0(ds) \ \Big| \  \nu(s) \geq \hat{v}(\bar{s},x) - \lambda \| s - \bar{s}\|_p^p, \quad  (s,\bar{s}) \in \mathcal{A}  \Big\}.
\end{align*}
Every iteration, for a candidate value of $\lambda$, for each value of $s$ with positive probability mass, the constraint corresponding to $(s,\bar{s})$ is added to the set $\mathcal{A}$, where $\bar{s}$ is found by solving the separation subproblem $\sup_{\bar{s} \in \bar{S}} \big\{ \hat{v}(\bar{s},x) - \lambda \| s - \bar{s} \|_p^p \big\}$.

Finally, we show that in the special case $p=1$, the pragmatic DRSIR problem simplifies further.

\begin{theorem} \label{thm:Q_hat}
Consider the pragmatic DRSIR function $\hat{\mathcal{Q}}^{\Prob_0}_\varepsilon$ and suppose that $p=1$. Then, for every $x \in \R^m$ we have
\begin{align*}
    \hat{\mathcal{Q}}^{\Prob_0}_\varepsilon(x) = \E^{\Prob_0}\big[\hat{v}(\xi,x)\big] + \| q \|_\infty \cdot \varepsilon, 
\end{align*}
where $\hat{v}$ is the function from \cref{cor:QP=QhatPbar}.
\end{theorem}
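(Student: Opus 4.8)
The plan is to work from the dual representation of $\hat{\mathcal{Q}}^{\Prob_0}_\varepsilon$ established in \cref{thm:Q_hat_dual}, specialized to $p=1$, and to follow the template of \cref{lemma:DRSIR_function_dual_expression}; the argument becomes substantially simpler here because $\hat{v}(\cdot,x)$ is convex and globally Lipschitz rather than merely upper semi-continuous. Since $\nu$ enters the dual objective with a non-negative coefficient, its optimal value is $\nu^\lambda(s,x) := \sup_{\bar{s} \in \bar{S}} \{ \hat{v}(\bar{s},x) - \lambda \| s - \bar{s}\|_1 \}$, so that
\begin{align*}
    \hat{\mathcal{Q}}^{\Prob_0}_\varepsilon(x) = \inf_{\lambda \in \R_+} \Big\{ \lambda \varepsilon + \int_S \nu^\lambda(s,x) \, \Prob_0(ds) \Big\}.
\end{align*}
The whole proof then reduces to identifying the optimal $\lambda$.

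First I would pin down the Lipschitz constant of $\hat{v}(\cdot,x)$. From the closed form in \cref{ex:UIMA_SIR} (equivalently \cref{cor:QP=QhatPbar}), each $\hat{v}_i(\cdot,x_i)$ is piecewise affine in $\xi_i$ with slopes $-q_i^-$, $q_i^+ - q_i^-$, and $q_i^+$, all of magnitude at most $\|q_i\|_\infty = \max\{q_i^+, q_i^-\}$. Because $\hat{v}(\cdot,x) = \sum_{i=1}^m \hat{v}_i(\cdot,x_i)$ and $p=1$ so that $\|s-\bar{s}\|_1 = \sum_{i=1}^m |s_i - \bar{s}_i|$, it follows that $\hat{v}(\cdot,x)$ is Lipschitz with respect to the $\ell_1$-norm with tight constant $\|q\|_\infty$.

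Next I would analyze $\nu^\lambda$ in two regimes. For $\lambda \geq \|q\|_\infty$, the Lipschitz bound gives $\hat{v}(\bar{s},x) - \hat{v}(s,x) \leq \|q\|_\infty \|s - \bar{s}\|_1 \leq \lambda \|s-\bar{s}\|_1$ for all $\bar{s}$, so the supremum defining $\nu^\lambda(s,x)$ is attained at $\bar{s}=s$ and equals $\hat{v}(s,x)$; the dual objective is then $\lambda \varepsilon + \E^{\Prob_0}[\hat{v}(\xi,x)]$, which is nondecreasing in $\lambda$. For $\lambda < \|q\|_\infty$, there is a coordinate $i^*$ realizing $\|q_{i^*}\|_\infty = \|q\|_\infty$; pushing $\bar{s}_{i^*}$ to infinity along the corresponding steep affine piece makes $\hat{v}(\bar{s},x) - \lambda \|s-\bar{s}\|_1 \to +\infty$, so $\nu^\lambda(s,x) = +\infty$ and the dual objective is $+\infty$. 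Hence the infimum over $\lambda \in \R_+$ is attained at $\lambda = \|q\|_\infty$, yielding exactly $\E^{\Prob_0}[\hat{v}(\xi,x)] + \|q\|_\infty \cdot \varepsilon$.

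The only delicate point is the boundary case $\lambda = \|q\|_\infty$, where one must verify that the supremum stays finite (and attained at $\bar{s}=s$) rather than diverging. This turns out to be clean: along any affine piece of slope exactly $\|q\|_\infty$, the gain $\hat{v}(\bar{s},x) - \hat{v}(s,x)$ is precisely offset by the transport penalty $\lambda\|s-\bar{s}\|_1$, so moving mass yields zero net change and does not push $\nu^\lambda$ above $\hat{v}(s,x)$. I therefore expect the tight-Lipschitz identification together with this boundary dichotomy to be the main (though mild) obstacle; everything else follows directly from the dual and from the expectation of a fixed function.
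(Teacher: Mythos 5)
Your proposal is correct, and its one-dimensional core coincides with the paper's: both arguments rest on the dual representation of \cref{thm:Q_hat_dual} and on the fact that $\hat{v}(\cdot,x)$ is Lipschitz with constant exactly $\|q\|_\infty$, which forces $\lambda = \|q\|_\infty$ and $\nu(s) = \hat{v}(s,x)$ in the dual (the paper cites Corollary~2 of Gao and Kleywegt for this step; your two-regime analysis of $\nu^\lambda$, including the divergence for $\lambda < \|q\|_\infty$ and the exact cancellation at $\lambda = \|q\|_\infty$, proves the same thing from scratch). Where you genuinely diverge is the multivariate case. The paper first establishes the result for $m=1$ and then lifts it to $\R^m$ via an analogue of \cref{prop:separability_Q}, solving an outer budget-allocation problem over $\varepsilon_1,\dots,\varepsilon_m$ whose optimum concentrates all budget on the coordinate $i^*$ maximizing $\|q_{i^*}\|_\infty$. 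You instead work directly in the $m$-dimensional dual and observe that, because $\hat{v}(\cdot,x)=\sum_i \hat{v}_i(\cdot,x_i)$ and the transport cost is $\|s-\bar{s}\|_1=\sum_i|s_i-\bar{s}_i|$, the $\ell_1$-Lipschitz constant of the separable sum is $\max_i\|q_i\|_\infty=\|q\|_\infty$; the budget-allocation conclusion is thereby absorbed into a single Lipschitz computation. Your route is shorter and avoids invoking the (unstated) multivariate separability analogue, at the cost of not exhibiting how the adversary's budget is distributed across coordinates — information the paper's route makes explicit and reuses in \cref{thm:Wasserstein_bound_multi-dim}. Both are valid; no gap.
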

\begin{proof}
    We first prove the result for the one-dimensional case, $m=1$. Then, we extend the result to the multi-dimensional case using an analogue of \cref{prop:separability_Q}.
    
    Let $m=1$, let $x \in \R$ be given, and suppress the $i$ index. Consider the dual problem from \cref{thm:Q_hat_dual}. Corollary~2 in \cite{gao2016distributionally} states that $\lambda = \|q\|_\infty$ is optimal for all $\varepsilon > 0$ if $\sup_{\bar{s} \in \bar{S}} \big\{ \hat{v}(\bar{s},x) - \| q \|_\infty | s - \bar{s} | \big\} \leq \hat{v}(s,x)$ for every $s \in S$. Indeed, as $\hat{v}$ is Lipschitz continuous with Lipschitz constant $L = \| q \|_\infty$, the inequality above holds with equality. Hence, $\lambda = \|q\|_\infty$ is optimal for all $\varepsilon > 0$. Substituting $\lambda = \|q\|_\infty$ and $\nu(s) = \sup_{\bar{s} \in \bar{S}} \big\{ \hat{v}(\bar{s},x) - \| q \|_\infty | s - \bar{s} | \big\} = \hat{v}(s,x)$ into the dual problem from \cref{thm:Q_hat_dual} yields
    \begin{align*}
        \hat{\mathcal{Q}}^{\Prob_0}_\varepsilon(x) &= \| q \|_\infty \varepsilon + \int_{S}  \hat{v}(s,x) \Prob_0(ds),
    \end{align*}
    which proves the result for $m=1$.
    
    Now we extend the result to the multi-dimensional case. By an analogue of \cref{prop:separability_Q}, we have
    \begin{align*}
        \hat{\mathcal{Q}}_{\varepsilon}^{\Prob_0}(x) &= \hspace{-8pt} \sup_{\varepsilon_1, \ldots, \varepsilon_m \geq 0} \Bigg\{ \sum_{i=1}^m \sup_{\Prob_i \in
        \mathcal{P}(\R)
        } \big\{  \E^{\gamma(\bar{\Prob}_i)} \big[ v_i(\xi_i,x_i)\big] \ \left | \  W_p(\Prob_{0,i}, \bar{\Prob}_i) \leq \varepsilon_i \big\} \right. \ \left | \ \sum_{i=1}^m \varepsilon_i = \varepsilon \Bigg\}. \right.
    \end{align*}
    \begin{comment}    
        \begin{align*}
            \hat{\mathcal{Q}}_{\varepsilon}^{\Prob_0}(x) &= \sup_{\varepsilon_i \geq 0} \Big\{ \sum_{i=1}^m \sup_{\Prob_i \in \mathcal{P}(\R)} \big\{  \E^{\gamma(\bar{\Prob}_i)} \big[ v_i(\xi_i,x_i)\big] \ | \  W_p(\Prob_{0,i}, \bar{\Prob}_i) \leq \varepsilon_i \big\} \ | \ \sum_{i=1}^m \varepsilon_i = \varepsilon \Big\}
        \end{align*}
    \end{comment}    
    Applying the one-dimensional result to the inner maximization problem, we obtain 
    \begin{align*}
        \hat{\mathcal{Q}}_{\varepsilon}^{\Prob_0}(x) &= \sup_{\varepsilon_1, \ldots, \varepsilon_m \geq 0} \left \{ \sum_{i=1}^m  \E^{\Prob_{0,i}} \big[ \hat{v}_i(\xi_i,x_i)\big] + \|q_i \|_\infty \cdot \varepsilon  \ \left | \ \sum_{i=1}^m \varepsilon_i = \varepsilon \right \}. \right.
    \end{align*}
    Let $i^* \in \argmax_{i=1,\ldots,m} \|q_i\|_\infty$. Then, clearly, an optimal solution is given by $\varepsilon_{i^*} = \varepsilon$ and $\varepsilon_j = 0$, $j \neq i^*$. Substituting this into the objective yields the result.
\end{proof}

By \cref{thm:Q_hat}, the pragmatic DRSIR problem with DRSIR function $\hat{\mathcal{Q}}_\varepsilon^{\Prob_0}$ is equivalent to the \textit{continuous} recourse problem, with value function $\hat{v}$, under the \textit{known} distribution $\Prob_0$. So, interestingly both difficulties of the original DRSIR problem (i.e., integer restrictions in the second stage and distributional uncertainty) disappear in this pragmatic approach. As a result, we can solve the resulting model efficiently.

\subsubsection{Distributional robustness through $\hat{v}$} \label{subsubsec:v_hat}

The results in \cref{subsubsec:U_hat} show that pragmatic DRSIR problems can be expressed in terms of the approximating value function $\hat{v}$. In fact, in the case $p=1$, \cref{thm:Q_hat} shows that the optimal solution $x^*$ does not depend on the value of $\varepsilon$, a well-known result for distributionally robust continuous recourse problems \cite{duque2020distributionally}. Hence, in that case, any robustifying behavior is a direct consequence of the fact that $v$ has been replaced by $\hat{v}$. These observations raise the question of how replacing $v$ by $\hat{v}$ can account for distributional robustness. We provide insight into this issue using the following example.

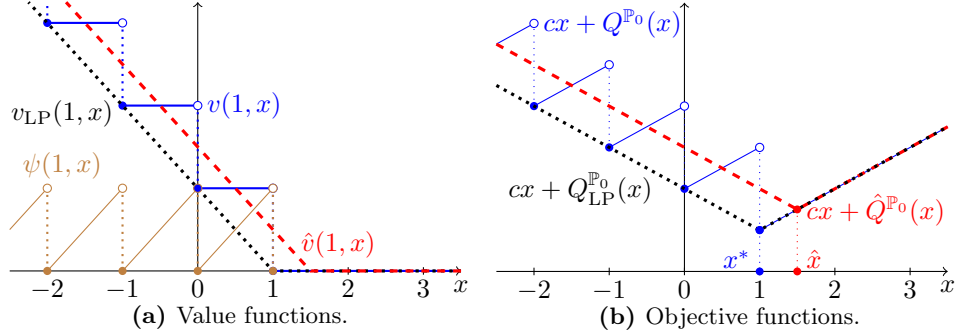
\begin{figure}[ht]
\centering
\begin{subfigure}{.5\textwidth}
  \centering
  \begin{tikzpicture}
% horizontal axis
\draw[->] (-2.5,0) -- (3.5,0);
% labels
\draw
(-2,0) node[anchor=north] {$-2$}
(-1,0) node[anchor=north] {$-1$}
(0,0) node[anchor=north] {$0$}
(1,0) node[anchor=north] {$1$}
(2,0) node[anchor=north] {$2$}
(3,0) node[anchor=north] {$3$}
%(4,0) node[anchor=north] {$4$}
(3.5,0) node[anchor=north] {$x$};
%ticks
\draw 
(-2,-.05) -- (-2,0.05)
(-1,-.05) -- (-1,0.05)
(0,-.05) -- (0,0.05)
(1,-.05) -- (1,0.05)
(2,-.05) -- (2,0.05)
(3,-.05) -- (3,0.05)
%(4,-.05) -- (4,0.05)
;
% vertical axis
\draw[->] (0,0) -- (0,0.55*6.5);
% labels
%none
%ticks
%none

%%%%%%%%%%%%%%%%%%%%%%%%%%%%%%%%%%%%%%%%%%%%%%%%%%%%%%%%%%%%%%%%%%%

%Draw functions:

%v(1,x)
\draw[thick, blue] 
(3.5,0.55*0) -- (1,0.55*0)  (1,0.55*2) -- (0,0.55*2)  (0,0.55*4) -- (-1,0.55*4)  (-1,0.55*6) -- (-2,0.55*6)  (-2,0.55*6.5);
%dots
\draw[thick, dotted, blue] 
(3.5,0.55*0)  (1,0.55*0) -- (1,0.55*2)  (0,0.55*2) -- (0,0.55*4)  (-1,0.55*4) -- (-1,0.55*6)  (-2,0.55*6) -- (-2,0.55*6.5);
\draw (0,0.55*4) node[blue, anchor=west] {$v(1,x)$};
%dots lower
\node[circle,fill=blue, inner sep=1.1pt] at (1,0.55*0){};
\node[circle,fill=blue, inner sep=1.1pt] at (0,0.55*2){};
\node[circle,fill=blue, inner sep=1.1pt] at (-1,0.55*4){};
\node[circle,fill=blue, inner sep=1.1pt] at (-2,0.55*6){};
%dots upper
\node[circle, blue, fill=white, inner sep=1.1pt, draw] at (1,0.55*2){};
\node[circle, blue, fill=white, inner sep=1.1pt, draw] at (0,0.55*4){};
\node[circle, blue, fill=white, inner sep=1.1pt, draw] at (-1,0.55*6){};

%v_{\text{LP}}(1,x)
\draw[very thick, dotted, black] 
(3.5,0.55*0) -- (1,0.55*0) -- (-2.25,0.55*6.5);
\draw (-1,0.55*3.8) node[black, anchor=east] {$v_{\text{LP}}(1,x)$};

%\hat{v}(1,x)
\draw[very thick, dashed, red] 
(3.5,0.55*0) -- (1.5,0.55*0) -- (-1.75,0.55*6.5);
\draw (1.25,0.55*0.65) node[red, anchor=west] {$\hat{v}(1,x)$};

%\psi(1,x)
\draw[brown]
(3.5,0.55*0)  (1,0.55*0)  (1,0.55*2) -- (0,0.55*0)  (0,0.55*2) -- (-1,0.55*0)  (-1,0.55*2) -- (-2,0.55*0)  (-2,0.55*2) -- (-2.5,0.55*1);
%dots
\draw[thick, dotted, brown]
(3.5,0.55*0)  (1,0.55*0) -- (1,0.55*2)  (0,0.55*0) -- (0,0.55*2)  (-1,0.55*0) -- (-1,0.55*2)  (-2,0.55*0) -- (-2,0.55*2)  (-2.5,0.55*1);
\draw (-1.8,0.55*2) node[brown, anchor=south] {$\psi(1,x)$};
%dots lower
\node[circle,fill=brown, inner sep=1.1pt] at (-2,0.55*0){};
\node[circle,fill=brown, inner sep=1.1pt] at (-1,0.55*0){};
\node[circle,fill=brown, inner sep=1.1pt] at (0,0.55*0){};
\node[circle,fill=brown, inner sep=1.1pt] at (1,0.55*0){};
%dots upper
\node[circle,brown,fill=white,inner sep=1.1pt, draw] at (-2,0.55*2){};
\node[circle,brown,fill=white,inner sep=1.1pt, draw] at (-1,0.55*2){};
\node[circle,brown,fill=blue,inner sep=1.1pt, draw] at (0,0.55*2){};
\node[circle,brown,fill=white,inner sep=1.1pt, draw] at (1,0.55*2){};

\end{tikzpicture}

\vspace{-20pt}
  \caption{Value functions.}
  \label{fig:example_v_hat}
\end{subfigure}%
\begin{subfigure}{.5\textwidth}
  \centering
  \begin{tikzpicture}
% horizontal axis
\draw[->] (-2.5,0) -- (3.5,0);
% labels
\draw
(-2,0) node[anchor=north] {$-2$}
(-1,0) node[anchor=north] {$-1$}
(0,0) node[anchor=north] {$0$}
(1,0) node[anchor=north] {$1$}
(2,0) node[anchor=north] {$2$}
(3,0) node[anchor=north] {$3$}
%(4,0) node[anchor=north] {$4$}
(3.5,0) node[anchor=north] {$x$};
%ticks
\draw 
(-2,-.05) -- (-2,0.05)
(-1,-.05) -- (-1,0.05)
(0,-.05) -- (0,0.05)
(1,-.05) -- (1,0.05)
(2,-.05) -- (2,0.05)
(3,-.05) -- (3,0.05)
%(4,-.05) -- (4,0.05)
;
% vertical axis
\draw[->] (0,0) -- (0,0.55*6.5);
% labels
%none
%ticks
%none

%%%%%%%%%%%%%%%%%%%%%%%%%%%%%%%%%%%%%%%%%%%%%%%%%%%%%%%%%%%%%%%%%%%

%Draw functions:

%cx
%\draw[thick, dotted] 
%(-1,-1) -- (5.5,5.5);
%\draw (1,1) node[anchor=east] {$cx$};

%v(z,x)
%\draw[dashed, blue] 
%(2,0) -- (2,2) -- (1,2) -- (1,4) -- (0,4) -- (0,6) -- (-1,6) -- (-1,8) -- (-1.5,8);
%\draw (-1,8) node[blue, anchor=west] {$Q^{\Prob^N}(x)$};

%cx + Q(x)
\draw[blue] 
(3.5,0.55*3.5) -- (1,0.55*1)  (1,0.55*3) -- (0,0.55*2)  (0,0.55*4) -- (-1,0.55*3)  (-1,0.55*5) -- (-2,0.55*4)  (-2,0.55*6) -- (-2.5,0.55*5.5);
\draw[dotted, blue] 
(3.5,0.55*3.5)  (1,0.55*1) -- (1,0.55*3)  (0,0.55*2) -- (0,0.55*4)  (-1,0.55*3) -- (-1,0.55*5)  (-2,0.55*4) -- (-2,0.55*6)  (-2.5,0.55*5.5);
\draw (-2,0.55*6) node[blue, anchor=west] {$cx + Q^{\Prob_0}(x)$};
%dots lower
\node[circle,fill=blue, inner sep=1.1pt] at (1,0.55*1){};
\node[circle,fill=blue, inner sep=1.1pt] at (0,0.55*2){};
\node[circle,fill=blue, inner sep=1.1pt] at (-1,0.55*3){};
\node[circle,fill=blue, inner sep=1.1pt] at (-2,0.55*4){};
%dots upper
\node[circle, blue, fill=white, inner sep=1.1pt, draw] at (1,0.55*3){};
\node[circle, blue, fill=white, inner sep=1.1pt, draw] at (0,0.55*4){};
\node[circle, blue, fill=white, inner sep=1.1pt, draw] at (-1,0.55*5){};
\node[circle, blue, fill=white, inner sep=1.1pt, draw] at (-2,0.55*6){};

%\hat{Q}(x)
%\draw[dashed, red]
%(-1.5,8) -- (2.5,0) -- (5.5,0);
%\draw (2.25, 0.5) node[red, anchor=west] {$\hat{Q}^{\Prob^N}(x)$};

%cx + \hat{Q}(x)
\draw[dashed, very thick, red]
(-2.5,0.55*5.5) -- (1.5,0.55*1.5) -- (3.5,0.55*3.5);
\draw (1.5,0.55*1.5) node[red, anchor=west] {$cx + \hat{Q}^{\Prob_0}(x)$};

%cx + Q_{LP}(x)
\draw[dotted, very thick, black]
(-2.5,0.55*4.5) -- (1,0.55*1) -- (3.5,0.55*3.5);
\draw (-0.3,0.55*2) node[black, anchor=east] {$cx + Q_{\text{LP}}^{\Prob_0}(x)$};

%Optimal values:
\fill[blue] (1,0.55*1) circle (1.5pt);
\draw[dotted, blue] (1,0.55*1) -- (1,0.55*0);
\fill[blue] (1,0.55*0) circle (1.5pt);
\draw (1,0.55*0.3) node[blue,anchor=east] {$x^*$};

\fill[red] (1.5,0.55*1.5) circle (1.5pt);
\draw[dotted, red] (1.5,0.55*1.5) -- (1.5,0);
\fill[red] (1.5,0.55*0) circle (1.5pt);
\draw (1.5,0.55*0.3) node[red,anchor=west] {$\hat{x}$};
\end{tikzpicture}

\vspace{-20pt}
  \caption{Objective functions.}
  \label{fig:example_Q_hat}
\end{subfigure}
\caption{A plot of the value function $v$ and related functions (a) and the corresponding objective functions (b) for the problem of \cref{ex:v_hat} with $a=1$.}
\label{fig:example_v_Q_hat}
\end{figure}

\begin{example} \label{ex:v_hat}
Consider a one-dimensional SIR problem with $c = 1$, $q^- = 0$, $q^+ = 2$; see \cref{fig:example_v_Q_hat}. Suppose we have a sample $\Xi^N = \{a\}$ of size $N=1$, and let $\Prob_0$ be the corresponding empirical distribution. If we naively assume that $\Prob_0$ is the true distribution, then we obtain an optimal value of $x^* = a$. Observe that we fit our solution $x^*$ exactly to the value of $a$, a point at which the objective function has a discontinuity (a ``jump''). Note, however, that if $\xi$ were just slightly smaller than $a$, we would have to pay the cost $q^+$ of the ``jump.'' In a context of distributional uncertainty, where the realization of $\xi$ may well be different from $a$, it follows that $x^*$ is a ``risky'' solution: it has been overfit to the sample $\Xi^N$. 

To better understand this overfitting mechanism, we decompose the value function into two parts: $v(\xi,x) = v_{\text{LP}}(\xi,x) + \psi(\xi,x)$, where $v_{\text{LP}}$ is the LP-relaxation of $v$ (i.e., the value function of the LP-relaxation of the second-stage mixed-integer program) and $\psi$ represents the ``cost of integer restrictions.'' We can split both $v_{\text{LP}}$ and $\psi(\xi,x)$ into two parts, corresponding to the two halves of their domains. First, $v_{\text{LP}}(\xi,x)$ equals $\xi - x$ if $x \leq \xi$ and zero if $x > \xi$. Second, $\psi(\xi,x)$ equals the periodic function $\psi^1(\xi,x) = \ceil{\xi - x} - (\xi - x)$ if $x \leq \xi$ and zero if $x > \xi$. Now, observe that our ``naive'' solution $x^*$ is located at a local minimum of the periodic function $\psi^1(a,x)$, right next to a ``jump.'' This is the cause of the ``riskiness'' of the solution $x^*$ observed above. We may conclude that in a distributionally uncertain context, the interaction between the periodicity of $\psi$ and the reference distribution on $\Xi^N$ leads to ``risky'' solutions that may have poor out-of-sample performance.

Given this observation, one approach to mitigate this issue may be to eliminate the periodic behavior of $\psi$. We discuss one particular approach that turns out to be especially relevant in our setting. First, we replace the periodic function $\psi^1$ by its mean value $\mu^1 := 1/2$. Next, we combine $\mu^1$ with the corresponding expression for $v_{\text{LP}}(\xi,x)$, yielding $\xi - x + \mu^1$. Doing the same for the expressions corresponding to $x > \xi$ yields the expression $0 + 0$. We combine both expressions by taking their maximum. This yields our approximating value function $\tilde{v}(\xi,x) := \max\{ \xi - x + \mu^1, 0 \} = (\xi - x + \mu^1)^+$. Interestingly, this is exactly the same as the approximating value function $\hat{v}$ from our paper: $\tilde{v} \equiv \hat{v}$. In other words, $\hat{v}$ does exactly what we set out to do: it eliminates the periodic behavior from $\psi$.

Now, applying this approach and solving the problem with the approximating value function $\hat{v}$ yields an optimal value of $x = \hat{x} := a + 1/2$, as illustrated in \cref{fig:example_v_Q_hat}. Indeed, this solution is more robust against realizations of $\xi$ that slightly differ from the observed value $a$. To make this precise, suppose that the true distribution $\Prob_{\text{true}}$ is uniformly distributed on the unit interval around $a$, i.e., on $(a - 1/2, a + 1/2)$. Observe that $\Prob_{\text{true}} = \gamma(\Prob_0)$. Then, the true optimization problem is to minimize $cx + Q^{\Prob_{\text{true}}}(x) = cx + Q^{\gamma(\Prob_0)}(x) =  cx + \hat{Q}^{\Prob_0}(x)$, where the last equality follows from \cref{lemma:QP=QhatPbar_Gamma}. Hence, in this case, the approach using $\hat{v}$ is \textit{exact}. Note that the distributional uncertainty that we hedged against in this example was about the \textit{fractional value} of $\xi$. We assume that $\xi$ is approximately located at $a$, but its fractional value is unknown. Hence, we conclude that replacing $v$ by $\hat{v}$ yields solutions that are distributionally robust against \textit{fractional} distributional uncertainty.

On a final note for this example, observe that another potential way of eliminating the periodicity of $\psi$ is to simply ignore $\psi$ altogether and replace $v$ by its LP-relaxation $v_{\text{LP}}$. However, in the example above, it is not hard to see that this approach would still yield the same ``risky'' solution $x^*$. Hence, this does not suffice to obtain a distributionally robust solution. The periodicity in $\psi$ should be eliminated, but its mean value (i.e., the ``average cost of having integer restrictions'') should be taken into account. This suggests that the results above are nontrivial. \qedexample
\end{example}

Interestingly, the discussion above provides a potential pathway for generalizing our pragmatic approach to settings beyond SIR. Recall that an analogue of the set~$\mathcal{C}$, the foundation for our approach, is not known in more general settings than SIR. However, generalizations of $\hat{v}$ is well-known for general MIR models \cite{romeijnders2016general}, which rely on the exact same process of eliminating periodicity from the function $\psi$. Hence, replacing $v$ by $\hat{v}$ could be a fruitful way to hedge against (fractional) distributional uncertainty in general mixed-integer recourse models. What is more, it leads to \textit{convex} models that are computationally more tractable than standard MIR models. In this sense, it solves two problems at once. We believe that this is a promising idea that merits future research.

\section{Stability of SIR when $\bar{\Prob} \in \mathcal{C}$ and implications} \label{sec:error_bounds}
In this section we derive a stability result for SIR, based on our analysis of pragmatic Wasserstein DRSIR problems in \cref{sec:wasserstein_DRSIR,sec:pragmatic_Wass_DRSIR}. In particular, we derive an upper bound of the form
\begin{align}
\sup_{\Prob \in \mathcal{B}_\varepsilon(\bar{\Prob})} \| Q^\Prob - Q^{\bar{\Prob}} \|_\infty  &\leq G(\varepsilon), \label{eq:bound_goal}
\end{align}
for $\bar{\Prob} \in \mathcal{C}$ if $p=1$. Interestingly, we find an upper bound $G(\varepsilon)$ that converges to zero as $\varepsilon \downarrow 0$. This is nontrivial because for, say, a degenerate distribution $\bar{\Prob}$, it is not hard to show that the left-hand side in \cref{eq:bound_goal} is at least $\| q \|_\infty$ for every $\varepsilon > 0$.

Our result can be interpreted and used in several ways. First, it is a stability result for SIR models \textit{without} distributional uncertainty because it shows that small perturbations of $\bar{\Prob} \in \mathcal{C}$ only lead to small changes in the expected value function $Q^{\bar{\Prob}}$.  Second, we use the result to compare pragmatic Wasserstein DRSIR with standard Wasserstein DRSIR. Finally, it yields an error bound for convex approximations of standard SIR models when we interpret $\Prob$ as the true distribution in the SIR model and $\bar{\Prob} \in \mathcal{C}$ as a ``convexifying'' approximating distribution in the sense made precise in \cref{def:convex_approximations} below.

\subsection{Stability result}

Our aim is to derive an inequality of the form \cref{eq:bound_goal}. We do so by finding a function, $G(\varepsilon)$, such that for every $x \in \R^m$,
\begin{subequations}\label{eq:bound_sup_inf_inequalities}
\begin{align}
\mathcal{Q}_\varepsilon^{\bar{\Prob}}(x) = \sup_{\Prob \in \mathcal{B}_\varepsilon(\bar{\Prob})} \E^{\Prob}\big[v(\xi,x)\big] &\leq \E^{\bar{\Prob}}\big[v(\xi,x)\big] + G(\varepsilon), \qquad \text{and}\label{eq:bound_sup_inequality}\\
\inf_{\Prob \in \mathcal{B}_\varepsilon(\bar{\Prob})} \E^{\Prob}\big[v(\xi,x)\big] &\geq \E^{\bar{\Prob}}\big[v(\xi,x)\big] - G(\varepsilon). \label{eq:bound_inf_inequality}
\end{align}
\end{subequations}
We focus on the first inequality \cref{eq:bound_sup_inequality}; the second inequality \cref{eq:bound_inf_inequality} follows analogously. As in \cref{prop:separability_Q} we can separate $\mathcal{Q}_\varepsilon^{\bar{\Prob}}(x)$ into $m$ one-dimensional subproblems:
\begin{align}
    \mathcal{Q}^{\bar{\Prob}}_\varepsilon(x) &= \sup_{\varepsilon_1, \ldots, \varepsilon_m \geq 0} \Bigg\{ \sum_{i=1}^m \sup_{\Prob_i \in \mathcal{P}(\R)} \Big\{  \E^{\Prob_i}\big[v_i(\xi_i,x_i)\big] \ \left | \ W_1(\bar{\Prob}_i, \Prob_i) \leq \varepsilon_i \Big\} \right. \ \left | \ \sum_{i=1}^m \varepsilon_i = \varepsilon \Bigg\}. \right. \label{eq:Q_P_bar_separable}
\end{align}
Hence, we can start with the one-dimensional case.

Let $m=1$, fix $\bar{\Prob} \in \mathcal{C}$, suppress the $i$ index, fix $x \in \R$, and consider $\mathcal{Q}^{\bar{\Prob}}_\varepsilon(x)$. In order to find an upper bound on $\mathcal{Q}^{\bar{\Prob}}_\varepsilon(x)$, it suffices to find a feasible solution to the dual problem
\begin{align*}
\inf_{\nu \in \mathcal{G}(S), \, \lambda \in \R_+} \Big\{ \int_{S} \nu(s) \bar{\Prob}(ds) + \lambda \varepsilon \ \left | \ \nu(s) + \lambda |s - \bar{s}| \geq v(\bar{s}, x), \ (s,\bar{s}) \in S \times \bar{S} \Big\}. \right.
\end{align*}
More specifically, we derive an \textit{upper bound} on the dual objective value corresponding to a dual feasible solution. By the development that led to~\cref{eq:def_nu} we can write the dual objective as 
\begin{align}
    \E^{\bar{\Prob}}\big[ v(\xi,x) \big] + \E^{\bar{\Prob}}\big[ r^\lambda(\xi,x) \big] + \lambda \varepsilon, \label{eq:dual_objective_r}
\end{align}
where $r^\lambda$ is the univariate version of the function defined in \cref{eq:def_r_plus}--\cref{eq:def_r_minus} (also see \cref{fig:nu_star}). We first derive a uniform upper bound on $\E^{\bar{\Prob}}\big[ r^\lambda(\xi,x) \big]$, using that $\bar{\Prob}$ is in~$\mathcal{C}$, and thus the probability distribution $\bar{\Prob}$ can be interpreted as a convex combination of uniform distributions on unit intervals; see \cref{prop:properties_gamma}.

\begin{lemma} \label{lemma:r_ub}
    Consider the function $r^\lambda$ from \cref{eq:def_r_plus}--\cref{eq:def_r_minus} in the one-dimensional setting. Let $x \in \R$ be given. Then, for any $\bar{\Prob} \in \mathcal{C}$ and $\lambda \geq \| q \|_\infty$ we have
    \begin{align*}
        \E^{\bar{\Prob}}\big[ r^\lambda(\xi,x) \big] &\leq \frac{ \| q \|_\infty^2}{2\lambda}.
    \end{align*}
\end{lemma}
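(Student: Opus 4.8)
The plan is to reduce the claim to a purely one-dimensional, deterministic estimate on the unit-interval moving average of $r^\lambda(\cdot,x)$, and then exploit the near-periodic, triangular structure of $r^\lambda$ visible in \cref{fig:nu_star}. Since we are in the one-dimensional setting and $\bar{\Prob}\in\mathcal{C}$, \cref{lemma:SIR_convexity_C} gives $\bar{\Prob}=\gamma(\tilde{\Prob})$ for some $\tilde{\Prob}\in\mathcal{P}(\R)$. Applying \cref{lemma:gamma_prob_func_equivalent} with $g(\cdot)=r^\lambda(\cdot,x)$ (which lies in $\mathcal{G}(\R)$ since $r^\lambda$ is bounded by $\|q\|_\infty$) then yields
\[
\E^{\bar{\Prob}}\big[r^\lambda(\xi,x)\big]=\E^{\tilde{\Prob}}\big[\gamma\circ r^\lambda(\xi,x)\big],
\]
so it suffices to prove the pointwise bound $\gamma\circ r^\lambda(s,x)\le \tfrac{\|q\|_\infty^2}{2\lambda}$ for every $s\in\R$; taking the $\tilde{\Prob}$-expectation of this uniform bound immediately gives the result. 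Equivalently, one may invoke \cref{prop:properties_gamma} to write $\bar{\Prob}$ as a mixture of uniform distributions on unit intervals and bound each inner integral $\int_{s-1/2}^{s+1/2}r^\lambda(t,x)\,dt$; both routes lead to the same deterministic estimate.

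Next I would analyze $\gamma\circ r^\lambda(s,x)=\int_{s-1/2}^{s+1/2}r^\lambda(t,x)\,dt$ directly from the explicit formulas \cref{eq:def_r_plus}--\cref{eq:def_r_minus}. The function $r^\lambda(\cdot,x)$ is a non-negative comb of disjoint triangular ``teeth'' of slope $\lambda$ anchored at the grid $x+\Z$: in the region $s\ge x$ every tooth has height $q^+$ and width $q^+/\lambda$, in the region $s\le x^\lambda$ every tooth has height $q^-$ and width $q^-/\lambda$, with a single transitional tooth of height $|q^+-q^-|$ in between (and symmetrically when $q^+\le q^-$). Because $\lambda\ge\|q\|_\infty$, each tooth has width at most $1$, so the teeth are disjoint and each has area $\tfrac{(q^\pm)^2}{2\lambda}\le \tfrac{\|q\|_\infty^2}{2\lambda}$. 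On the two \emph{pure} regions $r^\lambda(\cdot,x)$ is periodic with period $1$, and the moving average of a period-$1$ function over a unit interval equals its mean over one period, i.e.\ the area of one tooth per unit length; hence $\gamma\circ r^\lambda(s,x)$ is constant and equal to $\tfrac{(q^+)^2}{2\lambda}$ (respectively $\tfrac{(q^-)^2}{2\lambda}$) there, both at most $\tfrac{\|q\|_\infty^2}{2\lambda}$.

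It then remains to control $\gamma\circ r^\lambda(s,x)$ for the windows $[s-1/2,s+1/2]$ that meet the bounded transition region around $[x^\lambda,x]$, and this is the step I expect to be the main obstacle. Here the period-$1$ argument fails, since both the tooth heights and (across the $q^+\lessgtr q^-$ boundary) their orientation change, so the average need not be constant. The guiding principle is that any unit-length window still captures ``at most one tooth's worth'' of mass: the teeth are disjoint and spaced exactly one unit apart, each of height at most $\|q\|_\infty$, so in the pure regions the part of a tooth cut off on one side of the window is exactly compensated by the congruent part of the adjacent tooth entering on the other side, giving the clean constant value above. In the transition the adjacent teeth are no longer congruent, so I would make the bound rigorous by a short case check over the at most two pure-region teeth together with the single transitional tooth that can intersect a given window, comparing the resulting truncated triangular pieces against one full tooth of height $\|q\|_\infty$. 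Verifying that this mixed configuration never overshoots $\tfrac{\|q\|_\infty^2}{2\lambda}$ is the only genuinely computational part, and it establishes the uniform pointwise bound $\gamma\circ r^\lambda(s,x)\le\tfrac{\|q\|_\infty^2}{2\lambda}$, completing the proof.
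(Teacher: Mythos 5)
Your reduction is exactly the paper's: you pass from $\bar{\Prob}\in\mathcal{C}$ to a uniform pointwise bound on $R^\lambda(t,x):=\int_{t-1/2}^{t+1/2}r^\lambda(s,x)\,ds$ (the paper does this via the mixture-of-uniforms representation of \cref{prop:properties_gamma} and Fubini, which is the same as your $\gamma\circ r^\lambda$ route), and your treatment of the two pure periodic regions, where the moving average equals one tooth's area $\tfrac{(q^\pm)^2}{2\lambda}$, matches the paper's first case. The problem is that you stop exactly where the lemma's actual content begins. The transition region is not closable by the heuristic ``a unit window captures at most one tooth's worth of mass'': a window straddling $x$ can simultaneously contain (part of) a $q^-$-tooth, the full transitional tooth of height $q^+-q^-$, and (part of) a $q^+$-tooth, and the sum of those three areas is controlled by $\tfrac{(q^+)^2}{2\lambda}$ only because of the specific algebraic fact $(q^-)^2+(q^+-q^-)^2\le(q^+)^2$ (valid since $0\le q^-\le q^+$), which appears nowhere in your argument. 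Your stated compensation principle (mass leaving one side of the window reappears congruently on the other side) is precisely the periodicity argument that you yourself note fails here, so announcing ``a short case check'' without specifying the cases or the inequality that makes them work leaves the quantitative heart of the proof unproven.

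For comparison, the paper closes this gap in two steps. For $t\le x+1/2-q^+/\lambda$ (window not yet reaching the support of the first full $q^+$-tooth), it bounds $r^\lambda\le\rho_1^\lambda+\rho_2^\lambda$ pointwise on the window and uses periodicity of each $\rho_j^\lambda$ separately to get $R^\lambda(t,x)\le\tfrac{(q^-)^2+(q^+-q^-)^2}{2\lambda}\le\tfrac{(q^+)^2}{2\lambda}$. For the remaining short interval $x+1/2-q^+/\lambda<t<x+1/2$, it differentiates, $\dv{}{t}R^\lambda(t,x)=r^\lambda(t+1/2,x)-r^\lambda(t-1/2,x)\ge 0$, so $R^\lambda(\cdot,x)$ is nondecreasing there and bounded by its value at $t=x+1/2$, which is the pure-region constant $\tfrac{(q^+)^2}{2\lambda}$. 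If you supply these two ingredients (or an equivalent explicit enumeration of the truncated-triangle configurations), your proof is complete and essentially identical to the paper's; as written, it is not.
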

\begin{proof}
    Since $\bar{\Prob} \in \mathcal{C}$, there exists $\tilde{\Prob}$ such that $\bar{\Prob} = \gamma(\tilde{\Prob})$. Using this fact and the fact that by part~\eqref{prop:properties_gamma:xi_integral} of \cref{prop:properties_gamma} the pdf of $\bar{\Prob}$ can be written as a convex combination of pdfs $u_s$ of uniform distributions on unit intervals $(s-1/2,s+1/2)$, $s \in \R$, we have
    \begin{align}
        \E^{\bar{\Prob}}\big[ r^\lambda(\xi,x) \big] &= \int_{-\infty}^\infty r^\lambda(s,x) \bar{f}(s) ds \nonumber = \int_{-\infty}^\infty r^\lambda(s,x) \int_{-\infty}^\infty u_t(s) d\tilde{F}(t) ds  \nonumber \\
        &= \int_{-\infty}^\infty \int_{t - 1/2}^{t + 1/2} r^\lambda(s,x) ds d\tilde{F}(t), \label{eq:expectation_r}
    \end{align}
    where $\bar{f}$ is the pdf of $\xi$ under $\bar{\Prob}$, $\tilde{F}$ is the cdf of $\xi$ under $\tilde{\Prob}$, and where the last equality follows from Fubini's theorem. In order to find an upper bound on this expression, we first derive an upper bound on $R^\lambda(t,x) := \int_{t - 1/2}^{t + 1/2} r^\lambda(s,x) ds$, for any $t \in \R$. We note that \cref{fig:nu_star} is helpful for the remainder of the proof.
    
     Fix $x \in \R$ and without loss of generality, suppose that $q^+ \geq q^-$, so $r^\lambda$ is defined as in \cref{eq:def_r_plus}. Write $\rho^\lambda_1(s,x) := (q^- - \lambda(s - \floor{s}_x))^+$, $\rho^\lambda_2(s,x) := (q^+ - q^- - \lambda( \ceil{s}_x - s))^+$, and $\rho^\lambda_3(s,x) := (q^+ - \lambda( \ceil{s}_x - s))^+$, $s,x \in \R$, for the functions in \cref{eq:def_r_plus}. Observe that $\rho^\lambda_j(s,x)$ is periodic in $s$ with period 1, $j=1,2,3$. Furthermore, we can identify when each of these functions equal zero:
     \begin{align}
         \rho^\lambda_1(s,x) &= 0 \iff s \in [x + k + q^-/\lambda, x + k + 1) \text{ for some } k \in \Z, \label{eq:rho_1} \\
         \rho^\lambda_2(s,x) &= 0 \iff s \in (x + k - 1, x + k - (q^+ - q^-)/\lambda] \text{ for some } k \in \Z, \label{eq:rho_2} \\
         \rho^\lambda_3(s,x) &= 0 \iff s \in (x + k - 1, x + k - q^+/\lambda] \text{ for some } k \in \Z. \label{eq:rho_3}
     \end{align}
     We use these properties to show that $R^\lambda(t,x) \leq \frac{(q^+)^2}{2\lambda}$ for all $t \in \R$. We consider three cases for $t \in \R$.
     
     First, suppose that $t \geq x + 1/2$. Then, $r^\lambda(s,x) = \rho^\lambda_3(s,x) = (q^+ - \lambda (\ceil{s}_x - s))^+$, for all $s \in [t-1/2, t+1/2]$. Hence, 
     \begin{align}
        R^\lambda(t,x) &= \int_{t - 1/2}^{t + 1/2} (q^+ - \lambda (\ceil{s}_x - s))^+ ds = \int_{x}^{x+1} (q^+ - \lambda (\ceil{s}_x - s))^+ ds \nonumber \\
        &=\int_{x}^{x+1} (q^+ - \lambda (x + 1 - s))^+ ds = \int_0^1 (q^+ - \lambda (1 - u))^+ du = \frac{(q^+)^2}{2\lambda}, \label{eq:R_integral_1}
    \end{align}
    where we use periodicity of $\rho^\lambda_3$, the substitution $u = s - x$, and the inequality $\lambda \geq q^+$ in the second, fourth, and fifth equality, respectively.
    
    Second, suppose that $t \leq x + 1/2 - \frac{q^+}{\lambda}$. Then, by \cref{eq:def_r_plus} and \cref{eq:rho_1}--\cref{eq:rho_3} it follows that for all $s \in [t-1/2, t+1/2]$, we either have $r^\lambda(s,x) = \rho^\lambda_1(s,x) \geq 0$, or $r^\lambda(s,x) = \rho^\lambda_2(s,x) \geq 0$, or $r^\lambda(s,x) = \rho^\lambda_3(s,x) = 0$. It follows that for all $s \in [t-1/2, t+1/2]$, we have $r^\lambda(s,x) \leq \rho^\lambda_1(s,x) + \rho^\lambda_2(s,x)$. Hence,
    \begin{align*}
        R^\lambda(t,x) &\leq \int_{t-1/2}^{t+1/2} \rho^\lambda_1(s,x) ds + \int_{t-1/2}^{t+1/2} \rho^\lambda_2(s,x) ds \leq \frac{(q^-)^2}{2\lambda} + \frac{(q^+ - q^-)^2}{2\lambda} \leq \frac{(q^+)^2}{2\lambda}, %\label{eq:R_integral_2}
    \end{align*}
    where the first inequality follows similarly as \cref{eq:R_integral_1}, and the second inequality follows from $(q^-)^2 + (q^+ - q^-)^2 = (q^+)^2 + 2(q^-)^2 -2q^+ q^- = (q^+)^2 + 2q^-(q^- - q^+) \leq (q^+)^2$, since $0 \leq q^- \leq q^+$ by assumption. 
    
    Finally, suppose that $x + 1/2 - \frac{q^+}{\lambda} < t < x + 1/2$. Consider the derivative of $R^\lambda(t,x)$ with respect to $t$. We have, using \cref{eq:def_r_plus} and \cref{eq:rho_1}--\cref{eq:rho_3}, that
    \begin{align*}
        \dv{}{t}R^\lambda(t,x) &= r^\lambda(t+1/2,x) - r^\lambda(t-1/2,x) = \rho^\lambda_3(t+1/2,x) - \rho^\lambda_2(t-1/2,x) \\
        &= \rho^\lambda_3(t+1/2,x) - \rho^\lambda_2(t+1/2,x) \geq 0,
    \end{align*}
    where the last equality follows by periodicity of $\rho^\lambda_2$. Hence, $\dv{}{t}R^\lambda(t,x) \geq 0$ for all $t \in (x + 1/2 - \frac{q^+}{\lambda} , x + 1/2)$. It follows that for all $t \in (x + 1/2 - \frac{q^+}{\lambda} , x + 1/2)$, we have $R^\lambda(t,x) \leq \lim_{\tau \to x + 1/2} R^\lambda(\tau,x) = R^\lambda(x+1/2,x) = \frac{(q^+)^2}{2\lambda}$.
    
    The three cases above prove that $R^\lambda(t,x) \leq \frac{(q^+)^2}{2\lambda}$ for all $t \in \R$. Combining this with \cref{eq:expectation_r}, we obtain 
    \begin{align*}
        \E^{\bar{\Prob}}\big[ r^\lambda(\xi,x) \big] \leq \int_{\R} \frac{ \| q \|_\infty^2}{2\lambda} \bar{\Prob}(ds) = \frac{ \| q \|_\infty^2}{2\lambda},
    \end{align*}
    which proves the result.
\end{proof}

By \cref{lemma:r_ub} the dual objective \cref{eq:dual_objective_r} is bounded from above by
\begin{align}
    \E^{\bar{\Prob}}\big[ v(\xi,x) \big] + \frac{ \| q \|_\infty^2}{2\lambda} + \lambda \varepsilon. \label{eq:dual_objective_r_ub}
\end{align}
This constitutes an upper bound on $\mathcal{Q}_{\varepsilon}^{\bar{\Prob}}(x)$ for any value of $\lambda \geq \| q \|_\infty$. To find the \textit{best} upper bound over $\lambda$, we minimize expression~\cref{eq:dual_objective_r_ub} over $\lambda$. Hence, we solve
\begin{align*}
\inf_{\lambda \geq \| q \|_\infty} \Big\{ \E^{\bar{\Prob}}\big[ v(\xi,x) \big] + \frac{ \| q \|_\infty^2}{2\lambda} + \lambda \varepsilon  \Big\},
\end{align*}
which yields
\begin{align*}
    \lambda^*(\varepsilon) = \begin{cases}
    \|q\|_\infty / \sqrt{2\varepsilon}, &\text{if } 0 < \varepsilon \leq 1/2, \\
    \|q\|_\infty, &\text{if } \varepsilon \geq 1/2.
    \end{cases}
\end{align*}
Plugging this into the dual objective yields an inequality of the form \cref{eq:bound_sup_inequality}. Using an analogous analysis for \cref{eq:bound_inf_inequality} yields the following result for the one-dimensional case. Here, we use $g$ rather than the $G$ in inequalities~\cref{eq:bound_sup_inf_inequalities} to distinguish it from the multivariate result which follows. 

\begin{theorem} \label{thm:Wasserstein_bound_one-dim}
Consider the one-dimensional Wasserstein DRSIR function $\mathcal{Q}^{\bar{\Prob}}_\varepsilon$ with $\bar{\Prob} \in \mathcal{C}$. Then, for every $\varepsilon > 0$, we have
\begin{align*}
    \sup_{\Prob \in \mathcal{B}_\varepsilon(\bar{\Prob})} \| Q^\Prob - Q^{\bar{\Prob}} \|_\infty  &\leq g(\varepsilon),
\end{align*}
where $g: \R_+ \to \R$ is defined by
\begin{align*}
    g(\varepsilon) = \begin{cases}
    \| q \|_\infty \sqrt{2 \varepsilon}, &\text{if } 0 \leq \varepsilon \leq 1/2, \\
    \| q \|_\infty (\varepsilon + 1/2), &\text{if } \varepsilon \geq 1/2.
    \end{cases}
\end{align*}
\end{theorem}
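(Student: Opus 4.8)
The plan is to assemble the bound $g(\varepsilon)$ from the pieces already in place and then verify that the symmetric, lower-tail inequality \cref{eq:bound_inf_inequality} holds with the same constant. Since $\| Q^\Prob - Q^{\bar{\Prob}} \|_\infty = \sup_{x \in \R} | Q^\Prob(x) - Q^{\bar{\Prob}}(x) |$, it suffices to establish \cref{eq:bound_sup_inequality} and \cref{eq:bound_inf_inequality} with $G(\varepsilon) = g(\varepsilon)$: together they give $| Q^\Prob(x) - Q^{\bar{\Prob}}(x) | \leq g(\varepsilon)$ for every $x \in \R$ and every $\Prob \in \mathcal{B}_\varepsilon(\bar{\Prob})$, and taking the supremum first over $x$ and then over $\Prob$ yields the theorem.

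First I would dispatch the upper inequality \cref{eq:bound_sup_inequality}. By \cref{eq:dual_objective_r_ub}, which combines dual feasibility of $\nu^\lambda$ with \cref{lemma:r_ub}, we already have $\mathcal{Q}_\varepsilon^{\bar{\Prob}}(x) \leq \E^{\bar{\Prob}}[v(\xi,x)] + \tfrac{\| q \|_\infty^2}{2\lambda} + \lambda \varepsilon$ for every $\lambda \geq \| q \|_\infty$. The only remaining work is the scalar minimization of $\phi(\lambda) := \tfrac{\| q \|_\infty^2}{2\lambda} + \lambda \varepsilon$ over $\lambda \geq \| q \|_\infty$. The unconstrained stationary point is $\lambda = \| q \|_\infty / \sqrt{2\varepsilon}$, which respects the constraint precisely when $\varepsilon \leq 1/2$; substituting it gives $\phi = \| q \|_\infty \sqrt{2\varepsilon}$. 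For $\varepsilon \geq 1/2$ the minimizer sits on the boundary $\lambda = \| q \|_\infty$, giving $\phi = \| q \|_\infty (\varepsilon + 1/2)$. This reproduces $g(\varepsilon)$ exactly, establishing \cref{eq:bound_sup_inequality}.

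Next I would treat the lower inequality \cref{eq:bound_inf_inequality}. Writing $\inf_{\Prob} \E^{\Prob}[v(\xi,x)] = -\sup_{\Prob} \E^{\Prob}[-v(\xi,x)]$ and dualizing via Theorem~1 in \cite{gao2016distributionally}, the optimal inner variable becomes the lower-Lipschitz envelope $\underline{\nu}^\lambda(s,x) := \inf_{\bar{s}} \{ \underline{v}(\bar{s},x) + \lambda | s - \bar{s} | \}$ built from the lower semi-continuous envelope $\underline{v}$ of $v$, so that $\E^{\bar{\Prob}}[v(\xi,x)] - \inf_{\Prob} \E^{\Prob}[v(\xi,x)] = \inf_{\lambda} \{ \E^{\bar{\Prob}}[v(\xi,x) - \underline{\nu}^\lambda(\xi,x)] + \lambda \varepsilon \}$. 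The nonnegative gap $v - \underline{\nu}^\lambda$ is the mirror image of the function $r^\lambda$ from \cref{eq:def_r_plus}--\cref{eq:def_r_minus} under the reflection $s \mapsto 2x - s$, which maps the integer-gap sawtooth of $v(\cdot,x)$ to itself up to interchanging $q^+$ and $q^-$ and hence preserves $\| q \|_\infty$. A mirror of \cref{lemma:r_ub} then yields $\E^{\bar{\Prob}}[v(\xi,x) - \underline{\nu}^\lambda(\xi,x)] \leq \tfrac{\| q \|_\infty^2}{2\lambda}$ for $\bar{\Prob} \in \mathcal{C}$, and the identical $\lambda$-minimization delivers the same $g(\varepsilon)$.

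I expect the main obstacle to be this lower-tail direction: the phrase ``follows analogously'' conceals the need to set up the correct dual for a \emph{minimization} over a Wasserstein ball and then to re-prove \cref{lemma:r_ub} for the lower envelope, i.e.\ to verify that the reflected gap $v - \underline{\nu}^\lambda$ enjoys exactly the uniform bound $\tfrac{\| q \|_\infty^2}{2\lambda}$. The sawtooth symmetry makes this plausible, but it must be checked through the same kind of three-case argument used in \cref{lemma:r_ub}, with care taken that the reflection swaps the roles of $q^+$ and $q^-$ when $q^+ \neq q^-$. By contrast, the $\lambda$-minimization is routine, hinging only on the elementary fact that the constraint $\lambda \geq \| q \|_\infty$ becomes active exactly at $\varepsilon = 1/2$, which is what produces the two-regime form of $g$.
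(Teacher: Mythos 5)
Your treatment of the upper inequality \cref{eq:bound_sup_inequality} matches the paper exactly: dual feasibility of $\nu^\lambda$, the uniform bound of \cref{lemma:r_ub}, and the scalar minimization of $\|q\|_\infty^2/(2\lambda)+\lambda\varepsilon$ over $\lambda\ge\|q\|_\infty$, with the constraint becoming active at $\varepsilon=1/2$. That half is correct.

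The step that fails is the lower-tail inequality \cref{eq:bound_inf_inequality} --- precisely the place you flagged as the main obstacle. The gap $v-\underline{\nu}^\lambda$ is \emph{not} a mirror image of $r^\lambda$: the reflection $s\mapsto 2x-s$ carries upper Lipschitz envelopes to upper Lipschitz envelopes of the reflected function (one needs negation, not reflection, to pass to lower envelopes), and, more importantly, the two gaps have genuinely different local structure at $s=x$. On the maximization side one works with $\bar v(x,x)=\max\{q^+,q^-\}$, so (taking $q^+\ge q^-$) the two gap triangles adjacent to $x$ have heights $q^-$ and $q^+-q^-$, whose squares sum to at most $(q^+)^2$; this is exactly what the delicate second and third cases in the proof of \cref{lemma:r_ub} exploit. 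On the minimization side one works with $\underline v(x,x)=v(x,x)=0$ --- the value function has an isolated zero at $\xi=x$ --- so the two triangles of $v-\underline{\nu}^\lambda$ adjacent to $x$ have the \emph{full} heights $q^-$ and $q^+$, and a unit interval centered at $x$ collects $\big((q^+)^2+(q^-)^2\big)/(2\lambda)$, which exceeds $\|q\|_\infty^2/(2\lambda)$ whenever $q^+q^->0$. The ``mirror of \cref{lemma:r_ub}'' you invoke is therefore false, and so is \cref{eq:bound_inf_inequality} with $G=g$: take $m=1$, $p=1$, $q^+=q^-=1$, $x=0$, let $\bar{\Prob}$ be uniform on $(-1/2,1/2)$ (so $\bar{\Prob}=\gamma$ of a point mass at $0$, hence $\bar{\Prob}\in\mathcal{C}$), and let $\Prob$ collapse the mass on $(-\delta,\delta)$ to an atom at $0$ with $\delta=\sqrt{\varepsilon}\le 1/2$. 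Then $W_1(\bar{\Prob},\Prob)\le\delta^2=\varepsilon$, while $Q^{\bar{\Prob}}(0)-Q^{\Prob}(0)=2\delta=2\sqrt{\varepsilon}>\sqrt{2\varepsilon}=g(\varepsilon)$: the minimizing adversary harvests $q^+$ from the right of $x$ and $q^-$ from the left of $x$ simultaneously at the single dip point, which has no analogue on the maximization side. To be fair, the paper's own proof dismisses \cref{eq:bound_inf_inequality} with ``follows analogously,'' so you are reproducing its gap rather than creating a new one; but the lower-tail constant must be at least $\sqrt{(q^+)^2+(q^-)^2}\,\sqrt{2\varepsilon}$ for small $\varepsilon$, and it agrees with $g(\varepsilon)$ only in the one-sided case $q^+q^-=0$.
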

\begin{proof}
    The result is equivalent to the expressions in \cref{eq:bound_sup_inf_inequalities}, which follow from the discussion above.
\end{proof}

As $\lim_{\varepsilon \downarrow 0} g(\varepsilon) = 0$, \cref{thm:Wasserstein_bound_one-dim} shows that the Wasserstein SIR function is stable at any $\bar{\Prob} \in \mathcal{C}$. Intuitively, this fact can be interpreted as follows. Starting out with a distribution $\bar{\Prob} \in \mathcal{C}$, we have a Wasserstein budget of $\varepsilon$ to spend on changing the distribution in such a way that we increase the objective function as much as possible. This gain in the objective is indicated by the function~$g$; see \cref{fig:g}. Suppose for the moment that $0 < \varepsilon < 1/2$. Then, the most profitable changes in the distribution are ``rounding'' moves, i.e., moving probability mass over a distance less than one to points in $x + \Z$, where the value function $v(\cdot,x)$ has jumps. We make these rounding moves in decreasing order of (adversarial) profitability, hence the decreasing slope of $g$ in the figure. It takes a budget of at most $\varepsilon = 1/2$ to complete such rounding moves, and any additional budget is spent on moves of integer size, keeping all probability mass on $x + \Z$. Probability mass is moved in the most profitable direction, i.e., with a marginal return of $\|q\|_\infty$, which is captured in the slope of~$g$.
%for $\varepsilon \geq 1/2$.
%
The following example shows that the inequality of \cref{thm:Wasserstein_bound_one-dim} is tight in the sense that it can hold with equality. 
%An interesting question is whether the inequality in \cref{thm:Wasserstein_bound_one-dim} is tight. To partially answer this question, we provide an example for which the inequality holds with equality.

\begin{example}\label{ex:tightness}
    Consider the setting of \cref{thm:Wasserstein_bound_one-dim} and suppose that $q^+ \geq q^-$, $\varepsilon = 1/2$, and let $\bar{\Prob}$ represent a uniform distribution on $(a,a+1)$ for some $a \in\R$. Let $\Prob^*$ denote a degenerate distribution at $a+1$. Note that $W_1(\bar{\Prob}, \Prob^*) = 1/2 = \varepsilon$, so $\Prob^* \in \mathcal{B}_{\varepsilon}(\bar{\Prob})$. Hence,
    \begin{align*}
        \sup_{\Prob \in \mathcal{B}_\varepsilon(\bar{\Prob})} \| Q^\Prob - Q^{\bar{\Prob}} \|_\infty &\geq \sup_{\Prob \in \mathcal{B}_\varepsilon(\bar{\Prob})} \big\{ Q^{\Prob}(a) - Q^{\bar{\Prob}}(a) \big\} = \sup_{\Prob \in \mathcal{B}_\varepsilon(\bar{\Prob})} \E^{\Prob}\big[\bar{v}(\xi,a)\big] - \E^{\bar{\Prob}}\big[\bar{v}(\xi,a)\big] \\
        &\geq \E^{\Prob^*}\big[\bar{v}(\xi,a)\big] - \E^{\bar{\Prob}}\big[\bar{v}(\xi,a)\big] = \bar{v}(a+1,a) - \int_{a}^{a+1} \bar{v}(s,a) ds \\
        &= 2q^+ - q^+ = q^+ = g(\varepsilon),
    \end{align*}
    where the first equality follows by an analogue of \cref{lemma:v_usc} and continuity of the uniform distribution $\bar{\Prob}$. Combining this inequality with \cref{thm:Wasserstein_bound_one-dim}, we obtain $\sup_{\Prob \in \mathcal{B}_\varepsilon(\bar{\Prob})} \| Q^\Prob - Q^{\bar{\Prob}} \|_\infty = g(\varepsilon)$. \qedexample
\end{example}

\begin{figure}
    \centering
    \begin{tikzpicture}
  \draw[->] (-.5, 0.7*0) -- (6.5, 0.7*0) node[right] {$\varepsilon$};
  \draw[->] (0, 0.7*0) -- (0, 0.7*5.5);
  \draw (0,0) node[anchor=north] {$0$};
  \draw[scale=3, domain=0:1, smooth, thick, variable=\x, blue, samples=100] plot ({\x}, {0.7*(\x)^(1/2)});
  \draw[scale=3, domain=1:2, smooth, thick, variable=\x, blue] plot ({\x}, {0.7*(0.5*\x + 0.5)}) node[blue,anchor=west]{$g(\varepsilon)$};
  \draw[dotted, thick] (3,0.7*3) -- (3,0.7*0) node[anchor=north] {$1/2$};
  \draw[dotted, thick] (3,0.7*3) -- (0,0.7*3) node[anchor=east] {$\|q\|_\infty$};
  \draw[dashed] (0,0.7*0) -- (6,0.7*3) node[anchor=west] {$\| q \|_\infty \varepsilon$};
  \draw[<->, red, thick] (4,0.7*2) -- (4,0.7*3.5);
  \draw (4,0.7*3) node[anchor=west, red] {$g^*(\varepsilon)$};
\end{tikzpicture}

\vspace{-20pt}
    \caption{The function $g$ from \cref{thm:Wasserstein_bound_one-dim} and the one-dimensional analogue $g^*$ of the function $G^*$ from \cref{thm:DRSIR_bounds}.}
    \label{fig:g}
\end{figure}
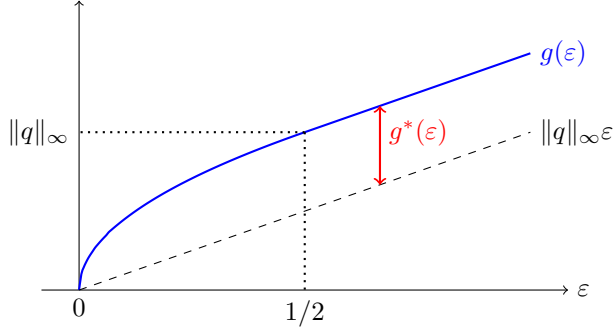

We can extend \cref{thm:Wasserstein_bound_one-dim} to the multi-dimensional case by applying \cref{thm:Wasserstein_bound_one-dim} to the inner maximization problems in \cref{eq:Q_P_bar_separable} and solving the outer budget allocation problem over $\varepsilon_i$, $i=1,\ldots,m$.

\begin{theorem} \label{thm:Wasserstein_bound_multi-dim}
Consider the multi-dimensional Wasserstein DRSIR function $\mathcal{Q}^{\bar{\Prob}}_\varepsilon$ with $\bar{\Prob} \in \mathcal{C}$. Then, for every $\varepsilon > 0$ we have
\begin{align*}
    \sup_{\Prob \in \mathcal{B}_\varepsilon(\bar{\Prob})} \| Q^\Prob - Q^{\bar{\Prob}} \|_\infty  &\leq G(\varepsilon),
\end{align*}
where $G: \R_+ \to \R$ is defined by
\begin{align}
    G(\varepsilon) := \begin{cases}
    \|\bar{q}\|_2 \sqrt{ 2 \varepsilon} &\text{if } 0 \leq \varepsilon \leq \bar{\varepsilon}, \\
    \| \bar{q} \|_2 \sqrt{2 \bar{\varepsilon}} + \| \bar{q} \|_\infty (\varepsilon - \bar{\varepsilon}) &\text{if } \varepsilon > \bar{\varepsilon},
    \end{cases} \label{eq:def_G}
\end{align}
where $\bar{q} := (\bar{q}_1,\ldots,\bar{q}_m)$, with  $\bar{q}_i := \| q_i \|_\infty = \max\{q^+_i, q^-_i\}$, and $\bar{\varepsilon}=\frac{1}{2} \| \bar{q} \|_2^2 / \| \bar{q} \|_\infty^2$.
\end{theorem}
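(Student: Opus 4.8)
The plan is to reduce the multivariate bound to the one-dimensional bound of \cref{thm:Wasserstein_bound_one-dim} via the separability formula \cref{eq:Q_P_bar_separable}, and then to solve the resulting Wasserstein-budget allocation problem in closed form by a water-filling argument. First I would note that $\bar{\Prob} \in \mathcal{C}(\R^m)$ implies, by the characterization in \cref{lemma:SIR_convexity_C}, that every marginal $\bar{\Prob}_i$ lies in $\mathcal{C}(\R)$. Hence \cref{thm:Wasserstein_bound_one-dim} applies coordinatewise, with $\| q \|_\infty$ replaced by $\bar{q}_i = \| q_i \|_\infty$, and uniformly in $x_i$: for every $x_i \in \R$, every $\varepsilon_i \geq 0$, and every $\Prob_i$ with $W_1(\bar{\Prob}_i, \Prob_i) \leq \varepsilon_i$, one has $\big| \E^{\Prob_i}[v_i(\xi_i,x_i)] - \E^{\bar{\Prob}_i}[v_i(\xi_i,x_i)] \big| \leq g_i(\varepsilon_i)$, where $g_i$ is the function $g$ of \cref{thm:Wasserstein_bound_one-dim} with $\bar{q}_i$ in place of $\| q \|_\infty$. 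Summing over $i$ for a fixed budget split and invoking \cref{eq:Q_P_bar_separable} (together with its infimum analogue for the lower bound) bounds, for each $x$, both $\mathcal{Q}^{\bar{\Prob}}_\varepsilon(x) - Q^{\bar{\Prob}}(x)$ and $Q^{\bar{\Prob}}(x) - \inf_{\Prob \in \mathcal{B}_\varepsilon(\bar{\Prob})} \E^{\Prob}[v(\xi,x)]$ by $\sup\{ \sum_{i=1}^m g_i(\varepsilon_i) : \varepsilon_i \geq 0, \ \sum_{i=1}^m \varepsilon_i = \varepsilon \}$. Taking the supremum over $x$ and using $\sup_{\Prob}\sup_x = \sup_x\sup_{\Prob}$ then reduces the theorem to showing that this allocation problem has optimal value equal to $G(\varepsilon)$ from \cref{eq:def_G}.

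Next I would solve the allocation problem. Each $g_i$ is concave, nondecreasing, and $C^1$, its two pieces matching in both value and slope at $\varepsilon_i = 1/2$, with marginal return $g_i'(\varepsilon_i) = \bar{q}_i / \sqrt{2 \varepsilon_i}$ on $(0,1/2)$ and $g_i'(\varepsilon_i) = \bar{q}_i$ on $(1/2,\infty)$. The objective $\sum_i g_i(\varepsilon_i)$ is therefore concave, so the KKT (water-filling) conditions---equalizing the marginal returns of all coordinates receiving positive budget to a common multiplier $\kappa$---are both necessary and sufficient for global optimality.

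For $0 \leq \varepsilon \leq \bar{\varepsilon}$ I would try the regime in which every coordinate is in the strictly concave phase $\varepsilon_i \leq 1/2$. Then $\bar{q}_i / \sqrt{2\varepsilon_i} = \kappa$ gives $\varepsilon_i = \bar{q}_i^2 / (2\kappa^2)$, and the budget constraint forces $\kappa = \| \bar{q} \|_2 / \sqrt{2\varepsilon}$; one checks that $\max_i \varepsilon_i \leq 1/2$ is equivalent to $\varepsilon \leq \bar{\varepsilon}$, so this split is feasible exactly on this range. Substituting back yields $\sum_i g_i(\varepsilon_i) = \sum_i \bar{q}_i \sqrt{2\varepsilon_i} = \| \bar{q} \|_2^2 / \kappa = \| \bar{q} \|_2 \sqrt{2\varepsilon}$, matching the first branch of \cref{eq:def_G}. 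For $\varepsilon > \bar{\varepsilon}$, at $\varepsilon = \bar{\varepsilon}$ the common marginal has dropped to exactly $\| \bar{q} \|_\infty$ and the maximal coordinates (those with $\bar{q}_i = \| \bar{q} \|_\infty$) sit at $\varepsilon_i = 1/2$. Since a maximal coordinate retains the constant marginal return $\| \bar{q} \|_\infty$ in its linear phase, whereas increasing any other coordinate would push its marginal return strictly below $\| \bar{q} \|_\infty$, the optimizer freezes the split attained at $\bar{\varepsilon}$ and pours the surplus $\varepsilon - \bar{\varepsilon}$ into a maximal coordinate, giving $G(\varepsilon) = \| \bar{q} \|_2 \sqrt{2 \bar{\varepsilon}} + \| \bar{q} \|_\infty (\varepsilon - \bar{\varepsilon})$, the second branch.

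The bulk of the honest work, and the main obstacle, is the water-filling analysis of the second regime: justifying rigorously that all surplus budget beyond $\bar{\varepsilon}$ is allocated to a single maximal coordinate and that no redistribution among the remaining coordinates can improve the objective. Concavity makes global optimality automatic once the KKT point is exhibited, so the only delicate points are handling the kink of $g_i$ at $\varepsilon_i = 1/2$ and verifying the threshold computation $\bar{\varepsilon} = \tfrac{1}{2} \| \bar{q} \|_2^2 / \| \bar{q} \|_\infty^2$ that marks the transition between the two branches (in particular that $\kappa = \|\bar{q}\|_\infty$ precisely at $\varepsilon = \bar{\varepsilon}$, which guarantees continuity of $G$ across the two regimes).
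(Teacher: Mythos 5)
Your proposal is correct and follows essentially the same route as the paper's proof: reduce to the one-dimensional bound of \cref{thm:Wasserstein_bound_one-dim} via the separability formula \cref{eq:Q_P_bar_separable}, then solve the budget-allocation problem $\sup\{\sum_i g_i(\varepsilon_i) : \sum_i \varepsilon_i = \varepsilon\}$ by equalizing marginal returns, with the same two regimes split at $\bar{\varepsilon}$ and the same surplus-to-a-maximal-coordinate allocation for $\varepsilon > \bar{\varepsilon}$. The water-filling/KKT framing is just a cleaner packaging of the paper's first-order-condition argument, and your explicit remark that $g_i$ is $C^1$ across the kink at $\varepsilon_i = 1/2$ is a detail the paper asserts without comment.
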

\begin{proof}
    It suffices to show that 
    \begin{subequations}
    \begin{align}
        \mathcal{Q}^{\bar{\Prob}}_\varepsilon(x) = \sup_{\Prob \in \mathcal{B}_\varepsilon(\bar{\Prob})} \E^{\Prob}\big[v(\xi,x)\big] &\leq \E^{\bar{\Prob}}\big[ v(\xi,x) \big] + G(\varepsilon), \quad \text{and} \label{eq:theorem_G_sup}\\
        \inf_{\Prob \in \mathcal{B}_\varepsilon(\bar{\Prob})} \E^{\Prob}\big[v(\xi,x)\big] &\geq \E^{\bar{\Prob}}\big[ v(\xi,x) \big] - G(\varepsilon). \label{eq:theorem_G_inf}
    \end{align}
    \end{subequations}
    We prove that \cref{eq:theorem_G_sup} holds; \cref{eq:theorem_G_inf} follows analogously. Applying \cref{thm:Wasserstein_bound_one-dim} to each of the inner maximization problems in \cref{eq:Q_P_bar_separable} yields    
    \begin{align*}
        \mathcal{Q}^{\bar{\Prob}}_\varepsilon(x) &\leq \sup_{\varepsilon_1, \ldots, \varepsilon_m \geq 0} \left \{ \left. \sum_{i=1}^m \Big( \E^{\bar{\Prob}_i}\big[v(\xi_i, x_i)\big] + g_i(\varepsilon_i) \Big)\ \right | \ \sum_{i=1}^m \varepsilon_i = \varepsilon \right \}  \\
        &=  \E^{\bar{\Prob}}\big[v(\xi, x)\big] + \sup_{\varepsilon_1, \ldots, \varepsilon_m \geq 0} \left \{ \sum_{i=1}^m g_i(\varepsilon_i) \ \left | \ \sum_{i=1}^m \varepsilon_i = \varepsilon \right \}, \right.
    \end{align*}
    where $g_i$ denotes the function $g$ from \cref{thm:Wasserstein_bound_one-dim} for the one-dimensional DRSIR model corresponding to the $i^{th}$ dimension. Now, observing that $g_i$, $i=1,\ldots,m$, are non-decreasing, concave, continuously differentiable (for $\varepsilon_i > 0$) functions, it follows from the first-order conditions that in the optimum, we must have $g^\prime_i(\varepsilon_i) = g^\prime_j(\varepsilon_j)$, $\forall i,j = 1,\ldots,m$.
\begin{comment}
    , where
    \begin{align*}
        g^\prime_i(\varepsilon_i) = \begin{cases}
        \bar{q}_i \sqrt{2 \varepsilon}, &\text{if } 0 < \varepsilon \leq 1/2, \\
        \bar{q}_i (\varepsilon + 1/2), &\text{if } \varepsilon \geq 1/2.
        \end{cases}
    \end{align*}
\end{comment}    
    %where $\bar{q} := (\bar{q}_1,\ldots,\bar{q}_m)$, with  $\bar{q}_i := \| q_i \|_\infty = \max\{q^+_i, q^-_i\}$.
    %    Let $i\neq j$ be given. 
    If $0 < \varepsilon_i, \varepsilon_j \leq 1/2$, then
    \begin{align*}
        g_i^\prime(\varepsilon_i) = g_j^\prime(\varepsilon_j) & \iff \frac{\bar{q}_i}{\sqrt{2 \varepsilon_i}} = \frac{\bar{q}_j}{\sqrt{2 \varepsilon_j}} \iff \varepsilon_j = \Big(\frac{\bar{q}_j}{\bar{q}_i}\Big)^2 \varepsilon_i.
    \end{align*}
    Substituting this into $\sum_{j=1}^m \varepsilon_j = \varepsilon$ for every $j=1,\ldots,m$,
    %, we obtain $\varepsilon = \sum_{j=1}^m \varepsilon_j = \frac{\varepsilon_i}{\bar{q}_i^2}\sum_{j=1}^m \bar{q}_j^2  = \frac{\| \bar{q} \|_2^2 \varepsilon_i}{\bar{q}_i^2}$, implying that
    yields
    $\varepsilon_i = \frac{\bar{q}_i^2}{\|\bar{q}\|_2^2} \, \varepsilon$,
    $i=1,\ldots,m$. The corresponding total value is
    \begin{align*}
        \sum_{i=1}^m g_i(\varepsilon_i) = \sum_{i=1}^m \bar{q}_i \sqrt{ 2 \frac{\bar{q}_i^2}{\|\bar{q}\|_2^2} \varepsilon} =
        %= \Big(\sum_{i=1}^m \bar{q}_i \frac{\bar{q}_i}{\|\bar{q}\|_2} \Big) \sqrt{ 2 \varepsilon} = 
        \|\bar{q}\|_2 \sqrt{ 2 \varepsilon}.
    \end{align*}
    This value is maximal if and only if $\varepsilon_j \leq 1/2$ for all $j=1,\ldots,m$, which holds if $\frac{\bar{q}_j^2}{\|\bar{q} \|_2} \varepsilon \leq 1/2$, for all $j=1,\ldots,m$, which is equivalent to $\varepsilon \leq \frac{1}{2} \left( \frac{\| \bar{q} \|_2}{\| \bar{q} \|_\infty} \right)^2 =: \bar{\varepsilon}$.
    
    In case $\varepsilon > \bar{\varepsilon}$, then an optimal solution is to set $\varepsilon_i > 1/2$, so that $g^\prime(\varepsilon_i) = \bar{q}_i$, for an index $i$ with $\bar{q}_i = \|\bar{q} \|_\infty$, and for indices $j\neq i$ set $\varepsilon_j \leq 1/2$ in such a way that $g_j^\prime(\varepsilon_j) = g_i^\prime(\varepsilon_i) = \| \bar{q} \|_\infty$. This holds if and only if $\varepsilon_j = \frac{1}{2} \left( \frac{\bar{q}_j}{\|q\|_\infty} \right)^2$ for all $j\neq i$. Since $\varepsilon_i = \varepsilon - \sum_{j \neq i} \varepsilon_j$, it follows that
    \begin{align*}
        \varepsilon_i = \varepsilon - \sum_{j \neq i} \frac{1}{2} \left( \frac{\bar{q}_j}{\|q\|_\infty} \right)^2 = \varepsilon - \frac{1}{2} \sum_{j=1}^m \left( \frac{\bar{q}_j}{\|q\|_\infty} \right)^2 + \frac{1}{2} \left( \frac{\bar{q}_i}{\|q\|_\infty} \right)^2 = \varepsilon - \bar{\varepsilon} + 1/2,
    \end{align*}
    and thus we obtain a total value of
    \begin{align*}
        \sum_{j=1}^m g_j(\varepsilon_j) &= \sum_{j \neq i} g_j \left(\frac{1}{2} \left( \frac{\bar{q}_j}{\|q\|_\infty} \right)^2\right) + g_i(\varepsilon - \bar{\varepsilon} + 1/2) \\
        &= \sum_{j \neq i} \bar{q}_j \sqrt{\left( \frac{\bar{q}_j}{\| \bar{q} \|_\infty} \right)^2} + \| \bar{q} \|_\infty (\varepsilon - \bar{\varepsilon} + 1) = \frac{\| \bar{q} \|_2^2}{\| \bar{q} \|_\infty} + \| \bar{q} \|_\infty (\varepsilon - \bar{\varepsilon}).
    \end{align*}
    The desired result now follows from the fact that $\frac{\| \bar{q} \|_2^2}{\| \bar{q} \|_\infty} = \|\bar{q} \|_2 \sqrt{2 \bar{\varepsilon}}$.
\end{proof}
The interpretation of \cref{thm:Wasserstein_bound_multi-dim} is analogous to that of \cref{thm:Wasserstein_bound_one-dim}. The only difference is that we now have multiple directions (dimensions) in which we can move probability mass and we pick the most profitable combination of those. %, i.e., we should allocate the total budget $\varepsilon$ over the gain functions $g_i(\varepsilon_i)$, $i=1,\ldots,m$ and it turns out that the optimal allocation is captured by the $\ell_2$ norm of $\bar{q}$.
In the next two subsections we show the implications of \cref{thm:Wasserstein_bound_multi-dim} in two different settings.

\subsection{Comparing standard and pragmatic Wasserstein DRSIR} \label{subsec:comparing}

Here, we use \cref{thm:Wasserstein_bound_multi-dim} to compare standard and pragmatic Wasserstein DRSIR in terms of the objective value. That is, we quantify how much power nature loses if we restrict ourselves to distributions in $\mathcal{C}$.

\begin{theorem} \label{thm:DRSIR_bounds}
    Consider the DRSIR functions $\mathcal{Q}_\varepsilon^{\Prob_0}$ and $\hat{\mathcal{Q}}_\varepsilon^{\Prob_0}$ from \cref{eq:Wass_DRSIR_problem_naive} and \cref{eq:def_Q_tilde_and_hat}. Suppose that $\varepsilon \geq W_1(\Prob_0, \hat{\Prob})$, where $\hat{\Prob} := \Gamma(\Prob_0)$, with $\Gamma$ given in \cref{def:Gamma}. Then, for every $x \in \R^m$,
    \begin{align}
        \hat{\mathcal{Q}}_\varepsilon^{\Prob_0}(x) - \|q\|_\infty &W_1(\Prob_0, \hat{\Prob}) \leq \mathcal{Q}_\varepsilon^{\Prob_0}(x) \label{eq:DRSIR_bounds} \\ &\leq 
        \hat{\mathcal{Q}}_\varepsilon^{\Prob_0}(x) + G^*(\varepsilon + W_1(\Prob_0, \hat{\Prob})) + \|q\|_\infty \cdot W_1(\Prob_0, \hat{\Prob}), \nonumber
    \end{align}
    where $G^* : \R_+ \to \R$ is defined by
    \begin{align*}
        G^*(\varepsilon) = G(\varepsilon) - \|q\|_\infty \cdot \varepsilon, \quad \varepsilon \in \R_+.
    \end{align*}
\end{theorem}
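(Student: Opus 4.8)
The plan is to prove the two inequalities separately, using throughout the identity $\hat{\mathcal{Q}}_\varepsilon^{\Prob_0}(x) = Q^{\hat{\Prob}}(x) + \|q\|_\infty \varepsilon$ from \cref{thm:Q_hat} (valid since $p=1$), where $\hat{\Prob} = \Gamma(\Prob_0) \in \mathcal{C}$, and abbreviating $d := W_1(\Prob_0,\hat{\Prob})$. Note that $Q^{\hat{\Prob}}(x) = \E^{\Prob_0}[\hat{v}(\xi,x)]$ by \cref{lemma:QP=QhatPbar_Gamma}. For the upper bound I would first invoke the triangle inequality for $W_1$ to get $\mathcal{B}_\varepsilon(\Prob_0) \subseteq \mathcal{B}_{\varepsilon + d}(\hat{\Prob})$, whence $\mathcal{Q}_\varepsilon^{\Prob_0}(x) \leq \mathcal{Q}_{\varepsilon+d}^{\hat{\Prob}}(x)$. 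Since $\hat{\Prob} \in \mathcal{C}$, the sup-direction of \cref{thm:Wasserstein_bound_multi-dim} gives $\mathcal{Q}_{\varepsilon+d}^{\hat{\Prob}}(x) \leq Q^{\hat{\Prob}}(x) + G(\varepsilon+d)$. Substituting $G(\varepsilon+d) = G^*(\varepsilon+d) + \|q\|_\infty(\varepsilon+d)$ and $Q^{\hat{\Prob}}(x) = \hat{\mathcal{Q}}_\varepsilon^{\Prob_0}(x) - \|q\|_\infty \varepsilon$ then collapses the expression to exactly $\hat{\mathcal{Q}}_\varepsilon^{\Prob_0}(x) + G^*(\varepsilon + W_1(\Prob_0,\hat{\Prob})) + \|q\|_\infty W_1(\Prob_0,\hat{\Prob})$.

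For the lower bound I would work from the dual of the \emph{standard} problem rather than from the pragmatic one. By \cref{lemma:strong_duality} with $p=1$, together with the fact that $\lambda \geq \|q\|_\infty$ is forced (see the discussion leading to \cref{eq:def_nu}), we may write $\mathcal{Q}_\varepsilon^{\Prob_0}(x) = \inf_{\lambda \geq \|q\|_\infty}\{\lambda \varepsilon + \E^{\Prob_0}[\nu^\lambda(\xi,x)]\}$, where $\nu^\lambda(s,x) = \sup_{\bar{s}} \{ \bar{v}(\bar{s},x) - \lambda\|s-\bar{s}\|_1 \}$ as in \cref{eq:nu_sup}. The decisive step is a coupling inequality. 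Let $(\xi,\eta)$ be distributed according to an optimal transport plan $\pi^* \in \Pi(\Prob_0,\hat{\Prob})$, so that $\E^{\pi^*}[\|\xi-\eta\|_1] = d$ (if none exists, use a near-optimal sequence). Evaluating the supremum defining $\nu^\lambda(\xi,x)$ at $\bar{s}=\eta$ and taking expectations yields $\E^{\Prob_0}[\nu^\lambda(\xi,x)] \geq \E^{\hat{\Prob}}[\bar{v}(\eta,x)] - \lambda d$ for every $\lambda \geq \|q\|_\infty$. Because $\hat{\Prob}$ is absolutely continuous, the jump set of $v(\cdot,x)$ is $\hat{\Prob}$-null, so $\E^{\hat{\Prob}}[\bar{v}] = \E^{\hat{\Prob}}[v] = Q^{\hat{\Prob}}(x) = \E^{\Prob_0}[\hat{v}]$; hence $\E^{\Prob_0}[\nu^\lambda(\xi,x)] \geq \E^{\Prob_0}[\hat{v}(\xi,x)] - \lambda d$.

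Substituting this bound into the dual gives $\mathcal{Q}_\varepsilon^{\Prob_0}(x) \geq \E^{\Prob_0}[\hat{v}] + \inf_{\lambda \geq \|q\|_\infty} \lambda(\varepsilon - d)$. Here the hypothesis $\varepsilon \geq W_1(\Prob_0,\hat{\Prob}) = d$ enters crucially: it makes $\varepsilon - d \geq 0$, so the infimum is attained at $\lambda = \|q\|_\infty$, leaving $\E^{\Prob_0}[\hat{v}] + \|q\|_\infty(\varepsilon - d) = \hat{\mathcal{Q}}_\varepsilon^{\Prob_0}(x) - \|q\|_\infty d$, which is the asserted lower bound. (This argument is already multidimensional; alternatively one could reduce to the one-dimensional case by separability and combine, but the dual route avoids that detour.)

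The main obstacle is the lower bound, specifically finding the right way to certify it. The tempting approach—using the reverse inclusion $\mathcal{B}_{\varepsilon-d}(\hat{\Prob}) \subseteq \mathcal{B}_\varepsilon(\Prob_0)$ to write $\mathcal{Q}_\varepsilon^{\Prob_0}(x) \geq \mathcal{Q}_{\varepsilon-d}^{\hat{\Prob}}(x)$ and then claiming $\mathcal{Q}_{\varepsilon-d}^{\hat{\Prob}}(x) \geq Q^{\hat{\Prob}}(x) + \|q\|_\infty(\varepsilon-d)$—fails: once the mass has been smeared into $\hat{\Prob}$, nature may only be able to increase the objective at the shallower rate $\min_i q_i^{\pm}$ (for instance when $\hat{\Prob}$ places its mass where $v(\cdot,x)$ has the gentler slope), so routing through the pragmatic center loses too much. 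The resolution is to keep the discrete jump structure of $\Prob_0$ intact and pass through the \emph{dual} of the standard problem, where the coupling inequality converts $W_1$-proximity directly into the additive penalty $\lambda d$ and the condition $\varepsilon \geq d$ is exactly what pins the optimal multiplier to $\|q\|_\infty$; this is also what makes the resulting constant $\|q\|_\infty d$, which the tightness checks confirm cannot be improved in general.
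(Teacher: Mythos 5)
Your proof is correct, and the upper bound is proved exactly as in the paper: the inclusion $\mathcal{B}_\varepsilon(\Prob_0) \subseteq \mathcal{B}_{\varepsilon+d}(\hat{\Prob})$, then \cref{thm:Wasserstein_bound_multi-dim} at the center $\hat{\Prob} \in \mathcal{C}$, then the algebraic substitutions via \cref{thm:Q_hat} and \cref{lemma:QP=QhatPbar_Gamma}. For the lower bound you take a genuinely different route. The paper uses the reverse inclusion $\mathcal{B}_{\varepsilon-d}(\hat{\Prob}) \subseteq \mathcal{B}_\varepsilon(\Prob_0)$ and then exhibits a primal maximizing sequence for $\mathcal{Q}^{\hat{\Prob}}_{\varepsilon-d}(x)$ whose value tends to $\E^{\hat{\Prob}}[v(\xi,x)] + \|q\|_\infty(\varepsilon-d)$; you instead pass through the strong dual of the standard problem and use an optimal coupling $\pi^* \in \Pi(\Prob_0,\hat{\Prob})$ to show $\E^{\Prob_0}[\nu^\lambda(\xi,x)] \geq \E^{\Prob_0}[\hat{v}(\xi,x)] - \lambda d$ for every $\lambda \geq \|q\|_\infty$, after which $\varepsilon \geq d$ pins the multiplier at $\|q\|_\infty$. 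Your argument is clean and has the advantage of avoiding the explicit adversarial construction. However, your stated reason for rejecting the paper's route is mistaken: the claim $\mathcal{Q}^{\hat{\Prob}}_{\varepsilon-d}(x) \geq Q^{\hat{\Prob}}(x) + \|q\|_\infty(\varepsilon-d)$ does \emph{not} fail. Nature is not confined to a local rate determined by where $\hat{\Prob}$ places its mass; it can move a vanishing fraction $\delta_n$ of mass a distance $L_n \to \infty$ along the coordinate and sign attaining $\|q\|_\infty$, with $\delta_n L_n = \varepsilon - d$, and since $v_i(\cdot,x_i)$ grows with asymptotic slope $q_i^{\pm}$ at infinity the objective gain converges to $\|q\|_\infty(\varepsilon-d)$ regardless of the starting distribution. (This is also consistent with \cref{thm:Q_hat}, where the gain under the smeared uncertainty set is exactly $\|q\|_\infty\varepsilon$, not $\min_i q_i^{\pm}\varepsilon$.) So both routes work; yours trades the limiting primal construction for a strong-duality invocation plus a coupling inequality.
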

\begin{proof}
    We start by proving the first inequality in \cref{eq:DRSIR_bounds}. Consider a feasible solution $\Prob$ to the DRO problem defined by $\mathcal{Q}_{\varepsilon - W_1(\Prob_0,\hat{\Prob})}^{\hat{\Prob}}(x)$. Then, since $W_1(\Prob,\Prob_0) \leq W_1(\Prob, \hat{\Prob}) + W_1(\hat{\Prob}, \Prob_0) \leq \varepsilon - W_1(\hat{\Prob}, \Prob_0) + W_1(\hat{\Prob}, \Prob_0) = \varepsilon$, $\Prob$ is also feasible for the DRO problem defined by $\mathcal{Q}_\varepsilon^{\Prob_0}(x)$. Hence, $\mathcal{Q}_\varepsilon^{\Prob_0}(x) \geq \mathcal{Q}_{\varepsilon - W_1(\Prob_0,\hat{\Prob})}^{\hat{\Prob}}(x)$. Starting from $\hat{\Prob}$, we can construct a sequence of feasible solutions to $\mathcal{Q}_{\varepsilon - W_1(\Prob_0,\hat{\Prob})}^{\hat{\Prob}}(x)$ with an objective value converging to $\E^{\hat{\Prob}}\big[v(\xi,x)\big] + \|q\|_\infty \cdot \big( \varepsilon - W_1(\Prob_0,\hat{\Prob}) \big)$. By \cref{thm:Q_hat} and \cref{lemma:QP=QhatPbar_Gamma} it follows that $\mathcal{Q}_\varepsilon^{\Prob_0}(x) \geq \mathcal{Q}_{\varepsilon - W_1(\Prob_0,\hat{\Prob})}^{\hat{\Prob}}(x) \geq \hat{\mathcal{Q}}_\varepsilon^{\Prob_0}(x) - \|q\|_\infty \cdot W_1(\Prob_0,\hat{\Prob})$.
    
    For the upper bound in \cref{eq:DRSIR_bounds}, consider a feasible solution $\Prob$ for $\mathcal{Q}_\varepsilon^{\Prob_0}(x)$. Then, since $W_1(\hat{\Prob}, \Prob) \leq W_1(\hat{\Prob}, \Prob_0) + W_1(\Prob_0, \Prob) \leq W_1(\hat{\Prob}, \Prob_0) + \varepsilon$, $\Prob$ is also a feasible solution for $\mathcal{Q}_{\varepsilon + W_1(\Prob_0,\hat{\Prob})}^{\hat{\Prob}}(x)$. Hence, $\mathcal{Q}_\varepsilon^{\Prob_0}(x) \leq \mathcal{Q}_{\varepsilon + W_1(\Prob_0,\hat{\Prob})}^{\hat{\Prob}}(x)$. Since $\hat{\Prob} \in \mathcal{C}$, it follows from \cref{thm:Wasserstein_bound_multi-dim} that
    \begin{align*}
        \mathcal{Q}_\varepsilon^{\Prob_0}(x) &\leq \mathcal{Q}_{\varepsilon + W_1(\Prob_0,\hat{\Prob})}^{\hat{\Prob}}(x) \leq \E^{\Prob_0}\big[v(\xi,x)\big] + G\big( \varepsilon + W_1(\Prob_0,\hat{\Prob}) \big) \\
        &= \hat{\mathcal{Q}}_\varepsilon^{\Prob_0}(x) + G^*\big(\varepsilon + W_1(\Prob_0, \hat{\Prob}) \big) + \|q\|_\infty \cdot W_1(\Prob_0, \hat{\Prob}).
    \end{align*}
\end{proof}

To interpret \cref{thm:DRSIR_bounds}, suppose that $W_1(\Prob_0, \Gamma(\Prob_0))$ is negligibly small compared to $\varepsilon$. Then, $\mathcal{Q}_\varepsilon^{\Prob_0}(x) - \hat{\mathcal{Q}}_\varepsilon^{\Prob_0}(x)$ is bounded from above by $G^*(\varepsilon)$. This function quantifies the difference between exploiting the ``jumps'' in $v(\cdot,x)$ and prohibiting this, and only allowing a ``linear'' reward $\|q\|_\infty \cdot \varepsilon$; see \cref{fig:g} for an illustration. The latter is exactly what happens when we restrict ourselves to distributions in $\mathcal{C}$.

\subsection{Error bounds for convex approximations of SIR} \label{subsec:error_bounds}

A final interpretation of \cref{thm:Wasserstein_bound_multi-dim} is in terms of a performance guarantee for so-called \textit{convex approximations} of SIR models, under a known distribution; i.e., this subsection is of interest outside of distributionally robust optimization. Convex approximations have been proposed in the literature as a means to overcome non-convexity of MIR models \cite{romeijnders2016general,romeijnders2015,romeijnders2016tu,vanbeesten2020convex,vanderlaan2020LBDA,vlerk2004,vlerk2010}. The idea is to approximate the (typically non-convex) recourse function $Q^{\Prob}(x) = \E^{\Prob}\big[v(\xi,x)\big]$ by a convex function. Then, the approximating model is convex and can be solved efficiently.

In order to guarantee the performance of these convex approximations, significant effort has been made in the literature to derive \textit{error bounds}: upper bounds on the approximation error \cite{romeijnders2017assessing,romeijnders2016general,romeijnders2015,romeijnders2016tu,vanbeesten2020convex,vanbeesten2022parametric,vanderlaan2020LBDA,vanderlaan2018higher}. This has led to substantial progress: the most general results are obtained for general MIR models with randomness in $h$, $q$, and $T$, and with a mean-CVaR objective function \cite{vanbeesten2020convex}. However, all error bounds in the literature depend on the total variation of the one-dimensional conditional probability density functions of $h$ and hence, rely on the crucial assumption that the distribution of the second-stage right-hand side vector $h$ is \textit{continuous}. No non-trivial error bounds are known that hold for discrete distributions. 

Here we derive an alternative error bound for convex approximations of the form $Q^{\tilde{\Prob}}$, $\tilde{\Prob} \in \mathcal{C}$, based on the Wasserstein distance between the original distribution $\Prob$ and the ``convexifying'' distribution $\tilde{\Prob}$. This error bound relies heavily on our ability to construct convex approximations of by use of a ``convexifying'' distribution $\tilde{\Prob} \in \mathcal{C}$. This is in sharp contrast with the literature, where convex approximations are typically defined in terms of approximating the \textit{value function} $v$ by a convex function~$\tilde{v}$. As we have observed before, transforming the distribution or transforming the value function are equivalent for SIR models, but the distribution perspective is particularly potent in this case. 

\begin{theorem} \label{thm:Wass_error_bound}
Consider the SIR function $Q^\Prob$ for some $\Prob \in \mathcal{P}(\R^m)$. Let $\tilde{\Prob} \in \mathcal{C}$ be given. Then,
\begin{align*}
    \| Q^{\Prob} - Q^{\tilde{\Prob}} \|_\infty \leq G(W_1(\Prob, \tilde{\Prob})),
\end{align*}
where $G$ is defined as in \cref{eq:def_G}.
\end{theorem}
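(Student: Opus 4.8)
The plan is to recognize that this error bound is, in essence, a reinterpretation of \cref{thm:Wasserstein_bound_multi-dim}: there the function $G$ controls how far $Q^\Prob$ can drift from $Q^{\bar{\Prob}}$ as $\Prob$ ranges over a Wasserstein ball of radius $\varepsilon$ around a convexifying distribution $\bar{\Prob} \in \mathcal{C}$. Here $\tilde{\Prob} \in \mathcal{C}$ plays precisely the role of the center $\bar{\Prob}$, and the arbitrary distribution $\Prob$ is a single point at Wasserstein distance $W_1(\Prob, \tilde{\Prob})$ from that center. So the whole argument reduces to instantiating \cref{thm:Wasserstein_bound_multi-dim} at the right radius and extracting one point from the supremum.

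Concretely, I would first set $\varepsilon := W_1(\Prob, \tilde{\Prob})$, which is finite because every distribution in $\mathcal{P}(\R^m)$ has a finite first moment by \cref{def:measure_theory} (the coordinate maps $s \mapsto |s_i|$ lie in $\mathcal{G}(\R^m)$). By symmetry of the Wasserstein distance, $W_1(\tilde{\Prob}, \Prob) = \varepsilon$, so $\Prob \in \mathcal{B}_\varepsilon(\tilde{\Prob})$. Since $\tilde{\Prob} \in \mathcal{C}$ and we are in the $p=1$ regime, \cref{thm:Wasserstein_bound_multi-dim} applies with center $\tilde{\Prob}$ and radius $\varepsilon$, yielding $\sup_{\Prob' \in \mathcal{B}_\varepsilon(\tilde{\Prob})} \| Q^{\Prob'} - Q^{\tilde{\Prob}} \|_\infty \leq G(\varepsilon)$. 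Because $\Prob$ is one feasible point of this ball, its deviation is dominated by the supremum, giving $\| Q^{\Prob} - Q^{\tilde{\Prob}} \|_\infty \leq G(\varepsilon) = G(W_1(\Prob, \tilde{\Prob}))$, which is exactly the claim.

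There is no substantive obstacle here, as the statement is a corollary; the only things to watch are bookkeeping. First, I would rename the dummy distribution inside the supremum (say to $\Prob'$) to avoid the notational clash caused by the role reversal, since the symbol $\Prob$ denotes the generic ball element in \cref{thm:Wasserstein_bound_multi-dim} but the fixed distribution here. Second, \cref{thm:Wasserstein_bound_multi-dim} is stated for $\varepsilon > 0$, so I would dispatch the degenerate case $\varepsilon = 0$ separately: then $\Prob = \tilde{\Prob}$, whence $Q^{\Prob} = Q^{\tilde{\Prob}}$ and both sides vanish since $G(0) = 0$.
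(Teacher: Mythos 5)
Your proposal is correct and matches the paper's proof, which simply states that the result follows directly from \cref{thm:Wasserstein_bound_multi-dim}; you have spelled out the instantiation (center $\tilde{\Prob}$, radius $\varepsilon = W_1(\Prob,\tilde{\Prob})$, extract one point from the supremum) that the paper leaves implicit. The extra care with the $\varepsilon = 0$ case and the dummy-variable renaming is sound bookkeeping and does not change the argument.
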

\begin{proof}
 The proof follows directly from \cref{thm:Wasserstein_bound_multi-dim}.
\end{proof}

\Cref{thm:Wass_error_bound} shows that a convex approximation $Q^{\tilde{\Prob}}$ of $Q^\Prob$ is good if the Wasserstein distance between the original distribution $\Prob$ and the approximating distribution $\tilde{\Prob}$ is small. Importantly, in contrast with the total variation error bounds from the literature, the error bound from \cref{thm:Wass_error_bound} requires no continuity assumption on the original distribution $\Prob$ of $h$. In fact, it holds for \textit{any} distribution $\Prob \in \mathcal{P}(\R^m)$.  

\Cref{thm:Wass_error_bound} has implications for two popular convex approximations from the literature: the \textit{shifted LP-relaxation approximation} and the \textit{$\alpha$-approximation}. These two approximations have been studied extensively in the relevant literature, both in the SIR setting and in more general MIR settings. Given their importance, we explicitly present the implications of \cref{thm:Wass_error_bound} for these two approximations.

\begin{definition} \label{def:convex_approximations}
Consider the SIR function $Q^\Prob$ and let $F$ be the joint cdf of $\xi$ under $\Prob$. Then, the \textit{shifted LP-relaxation approximation} of $Q^{\Prob}$ is given by $Q^{\hat{\Prob}}$, where $\hat{\Prob}$ is the distribution with corresponding joint cdf $\hat{F} := \Gamma(F)$ with $\Gamma$ as in \cref{def:Gamma}. Moreover, for $\alpha \in \R^m$, the \textit{$\alpha$-approximation} of $Q^\Prob$ is given by $Q^{\tilde{\Prob}_\alpha}$, where $\tilde{\Prob}_\alpha$ is the distribution with corresponding cdf $\tilde{F}_\alpha := \Gamma_\alpha(F)$, where $\Gamma_\alpha : \mathcal{F}(\R^m) \to \mathcal{F}(\R^m)$ is defined by
\begin{align*}
    \Gamma_\alpha \circ F (s) = \int_{\floor{s_1}_{\alpha_1}}^{\floor{s_1}_{\alpha_1} + 1} \cdots \int_{\floor{s_m}_{\alpha_m}}^{\floor{s_m}_{\alpha_m} + 1} F(t) dt_m \cdots dt_1, \quad s \in \R^m,
\end{align*}
where $\ceil{t}_a := \ceil{t - a} + a$ and $\floor{t}_a := \floor{t - a} + a$, for all $t, a \in \R$. Slightly abusing notation, we write $\hat{\Prob} = \Gamma(\Prob)$ and $\tilde{\Prob}_\alpha = \Gamma_\alpha(\Prob)$.
\end{definition}

\begin{corollary}\label{cor:error_bounds_concrete}
Consider the SIR function $Q^\Prob$ for some $\Prob \in \mathcal{P}(\R^m)$ and its convex approximations from \cref{def:convex_approximations}. Let $\hat{\Prob} := \Gamma(\Prob)$ and $\tilde{\Prob}_\alpha := \Gamma_\alpha(\Prob)$. Then,
\begin{align*}
    \| Q^{\Prob} - Q^{\hat{\Prob}} \|_\infty \leq G(W_1(\Prob, \hat{\Prob})), \quad \text{and} \quad  \| Q^{\Prob} - Q^{\tilde{\Prob}_\alpha} \|_\infty \leq G(W_1(\Prob, \tilde{\Prob}_\alpha)).
\end{align*}
\end{corollary}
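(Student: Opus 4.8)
The plan is to read \cref{cor:error_bounds_concrete} as two direct instances of \cref{thm:Wass_error_bound}. That theorem bounds $\|Q^{\Prob} - Q^{\tilde{\Prob}}\|_\infty$ by $G(W_1(\Prob,\tilde{\Prob}))$ for \emph{any} approximating distribution $\tilde{\Prob} \in \mathcal{C}$, so both inequalities drop out the moment we know that the two approximating distributions of \cref{def:convex_approximations}, namely $\hat{\Prob} = \Gamma(\Prob)$ and $\tilde{\Prob}_\alpha = \Gamma_\alpha(\Prob)$, lie in $\mathcal{C}$. Concretely, setting $\tilde{\Prob} = \hat{\Prob}$ in \cref{thm:Wass_error_bound} gives $\|Q^{\Prob}-Q^{\hat{\Prob}}\|_\infty \le G(W_1(\Prob,\hat{\Prob}))$ and setting $\tilde{\Prob} = \tilde{\Prob}_\alpha$ gives $\|Q^{\Prob}-Q^{\tilde{\Prob}_\alpha}\|_\infty \le G(W_1(\Prob,\tilde{\Prob}_\alpha))$, with no mismatch in the Wasserstein radius. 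Thus the whole content of the corollary is the two membership claims $\hat{\Prob}\in\mathcal{C}$ and $\tilde{\Prob}_\alpha\in\mathcal{C}$.

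The shifted LP-relaxation case is immediate. By construction $\hat{F}=\Gamma(F)$, and $\Gamma(\mathcal{P}(\R^m))\subseteq\mathcal{C}(\R^m)$: for $m=1$ this is exactly the defining property of $\mathcal{C}$ in \cref{lemma:SIR_convexity_C}, since $\Gamma$ reduces to $\gamma$ (\cref{remark:Gamma:details}) and one may take the generating cdf to be $F$ itself, whence $\hat{F}=\gamma(F)$; for $m>1$ it is \cref{lemma:Gamma_inclusions} of \cref{appendix:Gamma}, already invoked in the proof of \cref{lemma:Wass_dist_max_m/4}. Hence $\hat{\Prob}\in\mathcal{C}$ and the first bound follows at once.

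The $\alpha$-approximation case is the substantive one, and where I expect the real work to lie. Since SIR depends only on the marginals (\cref{eq:separability}) and $\mathcal{C}$ is characterized marginal-by-marginal in \cref{lemma:SIR_convexity_C}, it suffices to show that each marginal cdf of $\tilde{\Prob}_\alpha$, which is the one-dimensional transform $\Gamma_{\alpha_i}(F_i)$ of the $i$-th marginal $F_i$ of $\Prob$, can be written as $\gamma(\bar{F}_i)$ for some $\bar{F}_i\in\mathcal{F}(\R)$; I would do this by exhibiting the generating cdf $\bar{F}_i$ explicitly. The main obstacle is precisely this identification: whereas $\gamma$ averages $F_i$ over the symmetric window $[s-1/2,s+1/2]$ that slides continuously with $s$, the transform $\Gamma_{\alpha_i}$ averages $F_i$ over the grid-aligned cell $[\floor{s}_{\alpha_i},\floor{s}_{\alpha_i}+1]$ determined by the lattice $\alpha_i+\Z$. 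Matching the offset $\alpha_i$ to a correct generating distribution, and checking that the resulting smearing admits the unit-interval mixture representation of \cref{prop:properties_gamma} required by \cref{lemma:SIR_convexity_C}, is therefore delicate and does not reduce directly to the $\Gamma$ case; I anticipate needing an analogue of the appendix development for $\Gamma$ (\cref{lemma:Gamma_marginal}, \cref{lemma:Gamma_inclusions}) adapted to the shifted transform $\Gamma_\alpha$. Once $\tilde{\Prob}_\alpha\in\mathcal{C}$ is secured, both error bounds follow uniformly from \cref{thm:Wass_error_bound} and the definition of $G$ in \cref{eq:def_G}.
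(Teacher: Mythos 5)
Your proposal matches the paper's proof exactly: the paper's entire argument is the one-line observation that both bounds are instances of \cref{thm:Wass_error_bound} applied with $\tilde{\Prob}=\hat{\Prob}$ and $\tilde{\Prob}=\tilde{\Prob}_\alpha$, justified by $\hat{\Prob},\,\tilde{\Prob}_\alpha\in\mathcal{C}$, which is precisely your reduction (and your justification of $\hat{\Prob}\in\mathcal{C}$ via \cref{lemma:Gamma_inclusions} is the right one). The one step you flag as delicate, $\tilde{\Prob}_\alpha\in\mathcal{C}$, is simply asserted by the paper and is in fact not delicate: take $\bar{F}_i$ to be the cdf of the discrete distribution placing mass $\Prob\big(\xi_i\in(\alpha_i+k,\alpha_i+k+1]\big)$ at the point $\alpha_i+k+1/2$ for each $k\in\Z$; then $\gamma(\bar{F}_i)$ is exactly the piecewise-uniform redistribution of $\Prob_i$ over the $\alpha_i$-shifted unit cells, i.e., the $i$th marginal that the $\alpha$-approximation is designed to produce, so \cref{lemma:SIR_convexity_C} applies coordinatewise.
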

\begin{proof}
    The proof follows immediately from \cref{thm:Wass_error_bound} since $\hat{\Prob}, \, \tilde{\Prob}_\alpha \in \mathcal{C}$.
\end{proof}

An interesting question is how the Wasserstein error bounds from \cref{cor:error_bounds_concrete} compare with the total variation error bounds from the literature. To answer this question, we repeat the best known error bounds from the literature in \cref{lemma:totvar_bound} below and subsequently explore an example that provides insight into the relative performance between both types of error bounds.

\begin{lemma}\label{lemma:totvar_bound}
Consider the SIR function $Q^\Prob$ from 
\cref{eq:separability}
and its convex approximations from \cref{def:convex_approximations}. Then,
\begin{align*}
    &\| Q^\Prob - Q^{\hat{\Prob}} \|_\infty \leq \sum_{i=1}^m \bar{q}_i \E^{\Prob_{-i}}\Big[ H\big( \totvar f_i(\cdot|\xi_{-i}) \big) \Big] \ \ \text{and } \\
    &\| Q^\Prob - Q^{\tilde{\Prob}_\alpha} \|_\infty \leq \sum_{i=1}^m \bar{q}_i \E^{\Prob_{-i}}\Big[ H\big( \totvar f_i(\cdot|\xi_{-i}) \big) \Big],
\end{align*}
where $\totvar f_i(\cdot|s_{-i})$ denotes the total variation of the conditional pdf of $\xi_i$, given $\xi_{-i} = s_{-i}$ (with $s_{-i}$ denoting the vector $s$ without its $i^{th}$ element); $\bar{q}_i := q_i^+ + q_i^-$, $i=1,\ldots,m$; and $H : \R_+ \to \R$ is defined by
\begin{align*}
    H(t) = \begin{cases}
    t/8, &\text{if } 0 \leq t \leq 4, \\
    1 - 2/t, &\text{if } t > 4.
    \end{cases}
\end{align*}
\end{lemma}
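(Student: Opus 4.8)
The plan is to recognize that \cref{lemma:totvar_bound} merely restates the sharpest total-variation error bounds already available in the literature for the two convex approximations of \cref{def:convex_approximations}, so the proof amounts to an appeal to existing results together with a verification that the SIR setting is the relevant special case. First I would invoke separability of SIR from \cref{eq:separability}, which reduces $\| Q^\Prob - Q^{\hat{\Prob}} \|_\infty$ (and the analogous quantity for $\tilde{\Prob}_\alpha$) to a sum over the $m$ coordinates of one-dimensional approximation errors, thereby isolating the $i$th marginal while conditioning on $\xi_{-i}$.

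Second, for each fixed $i$ I would condition on $\xi_{-i} = s_{-i}$ and apply the known one-dimensional error bound for both the shifted LP-relaxation and the $\alpha$-approximation. These bounds take exactly the form $\bar{q}_i \, H\big( \totvar f_i(\cdot \mid s_{-i}) \big)$, where $H$ is the concave, nondecreasing function in the lemma statement and $\totvar f_i(\cdot \mid s_{-i})$ is the total variation of the corresponding conditional density; this is precisely the content of the relevant theorems in \cite{romeijnders2016general}, specialized to the SIR value function, and of the earlier SIR-specific results referenced therein. Taking the expectation over $\xi_{-i} \sim \Prob_{-i}$ and summing over $i = 1, \ldots, m$ then yields the two stated inequalities. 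Since the identical one-dimensional bound holds for both the shifted LP-relaxation and the $\alpha$-approximation, the two displayed inequalities follow by the same argument.

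The main obstacle is not analytical but one of careful bookkeeping: one must match the normalization conventions of the cited results to those used here, in particular the definition of total variation $\totvar$, the cost aggregation $\bar{q}_i := q_i^+ + q_i^-$, and the precise breakpoint and constants defining $H$. One must also confirm that the approximating distributions $\hat{\Prob} = \Gamma(\Prob)$ and $\tilde{\Prob}_\alpha = \Gamma_\alpha(\Prob)$ of \cref{def:convex_approximations} coincide with the convex approximations for which those bounds were originally established (respectively, the shifted LP-relaxation and the $\alpha$-approximation). Once this identification is made, no further argument is required, and the lemma follows directly from the literature.
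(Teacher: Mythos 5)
Your proposal is correct and matches the paper's approach: the paper's proof is a one-line appeal to Theorems~6 and~5 of \cite{romeijnders2016tu}, which are exactly the literature error bounds you describe (your citation of \cite{romeijnders2016general} points to the companion general-MIR paper, but the bounds in the stated form come from \cite{romeijnders2016tu}). The separability/conditioning scaffolding you add is already built into those cited theorems, so no further argument is needed.
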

\begin{proof}
    The error bounds follow from Theorems~6 and 5, respectively, in \cite{romeijnders2016tu}.
\end{proof}

\begin{example}\label{ex:compare_error_bounds}
    Consider the $\alpha$-approximation in the one-dimensional case ($m=1$) and suppose that $q^+ \geq q^-$. Then, \cref{lemma:totvar_bound}'s second inequality reduces to $\| Q^{\Prob} - Q^{\tilde{\Prob}_\alpha} \|_\infty \leq q^+ H(\totvar f)$, where $f$ is the pdf of $\xi$ under $\Prob$,  while \cref{cor:error_bounds_concrete} reduces to $\| Q^{\Prob} -Q^{\tilde{\Prob}_\alpha} \|_\infty \leq \textcolor{white}{\tilde{\tilde{\tilde{\tilde{\textcolor{black}{q^+}}}}}} \sqrt{2 W_1(\Prob, \tilde{\Prob}_\alpha)}$, using the fact that $W_1(\Prob, \tilde{\Prob}_\alpha) \leq 1/2$ by an analogue of \cref{lemma:Wass_dist_max_m/4}. 
    
    Both bounds have the same, trivial, worst-case value of $q^+$ by letting $\totvar f \to \infty$ and $W_1(\Prob, \tilde{\Prob}_\alpha) = 1/2$. In some cases, the Wasserstein bound performs better than the total variation bound. For instance, the Wasserstein bound correctly recognizes that $\| Q^{\Prob} - Q^{\tilde{\Prob}_\alpha} \|_\infty = 0$ if the conditional distribution of $\xi$ given $\xi \in [\alpha + k, \alpha + k + 1)$ is uniform for each $k \in \Z$ under $\Prob$. The Wasserstein bound reduces to zero, while the total variation bound does not; for instance, it equals $q^+/4$ if $\xi \sim U(\alpha, \alpha+1)$ under $\Prob$. There also exist cases in which the Wasserstein bound converges to zero while the total variation bound converges to its maximum value of $q^+$. For example, suppose that under $\Prob_n$, $\xi$ is uniformly distributed over $\cup_{k=0}^{2^{n}-1} (k 2^{-n}, (k+1)2^{-n})$, for $n \in \Z$. Then, indeed, $\lim_{n \to \infty} W_1(\Prob, \tilde{\Prob}_\alpha) = 0$, while $\lim_{n \to \infty} \totvar f_n = \infty$. However, for many ``nicely behaved'' continuous distributions $\Prob$, the total variation bound performs best. For instance, for a standard normal distribution the Wasserstein bound with $\alpha = 0$ equals $0.37 q^+$, while the total variation bound equals $0.10 q^+$.  \qedexample
\end{example}

Hence, we find that the performance of our error bounds can differ substantially from the performance of the error bounds from the literature. Nevertheless, both have a similar intuitive interpretation in terms of the dispersion of the distribution $\Prob$. The Wasserstein bound is small if $W_1(\Prob, \tilde{\Prob}_\alpha)$ is small, which is the case if the conditional distribution of $\xi$ given $\xi \in [\alpha + k, \alpha + k + 1)$, is close to a uniform distribution for every $k \in \Z$. Note that the Wasserstein bound equals zero if all these conditional distributions are indeed uniform, which is arguably the ``most dispersed'' case. Hence, similar to the total variation error bound, we can interpret the Wasserstein error bound by saying that the approximation is good if the distribution of $\xi$ is highly dispersed. Based on the examples above, the Wasserstein bound is better at recognizing dispersion \textit{within} unit intervals, while the total variation bound is better at recognizing dispersion \textit{between} unit intervals.

\section{Pragmatic moment-based DRSIR} \label{sec:pragmatic_moment_DRSIR}

Our pragmatic approach is not restricted to the Wasserstein setting. In principle, it can be applied to DRSIR models with an arbitrary uncertainty set $\mathcal{U}$. In this section we illustrate this by studying pragmatic DRSIR in a setting with an uncertainty set based on (generalized) moment conditions. We first outline our approach in primal form and show how it differs from standard moment-based DRSIR. Next, we derive the dual, which we use to sketch an algorithm based on row generation.

\subsection{Primal formulation} \label{subsec:moment_primal}

We consider the moment-based uncertainty set
\begin{align}
    \mathcal{U}_K :=  \left \{ \Prob \in \mathcal{P}(\R^m) \ \left | \ \int_{\R} g_i^k(s_i) \Prob_i(ds_i) = M_i^k, \quad k \in K_i, \ i=1,\ldots,m \right. \right \},  \label{eq:def_U_K}
\end{align}
with $M_i^k \in \R$ and $g_i^k : \R \to \R$, $k \in K_i$, with $K_i$ a finite index set, $i=1,\ldots,m$.
\begin{remark}
We only consider \textit{marginal} moment conditions, i.e., conditions on the moments of the marginal distributions $\Prob_i$, $i=1,\ldots,m$. In principle, our approach can be applied to joint moment conditions, too, but doing so adds complications without additional insight. 
%the results turn out to be particularly elegant in this marginal setting. 

\end{remark}

Similarly as for the Wasserstein-based uncertainty set, the corresponding DRSIR function $\mathcal{Q}_K(x) = \sup_{\Prob \in \mathcal{U}_K} \E^{\Prob}\big[ v(\xi,x) \big]$ is generally non-convex. For this reason, we define our pragmatic uncertainty set by restricting $\mathcal{U}_K$ to elements of $\mathcal{C}$. 
%Such distributions can be used to approximate any distribution arbitrarily well. 
The pragmatic uncertainty set is 
\begin{align*}
    \tilde{\mathcal{U}}_K := \mathcal{U}_K \cap \mathcal{C},
\end{align*}
and the pragmatic DRSIR function is
\begin{align}
    \tilde{\mathcal{Q}}_K(x) := \sup_{\Prob \in \tilde{\mathcal{U}}_K} \E^{\Prob}\big[ v(\xi,x) \big], \qquad x \in \R^m. \label{eq:def_Q_tilde_K}
\end{align}
Writing the maximization problem in~\cref{eq:def_Q_tilde_K} as an infinite-dimensional LP, we can show that it is equivalent to a distributionally robust \textit{continuous} recourse problem under \textit{adjusted} moment conditions.

\begin{theorem} \label{thm:moment-based_DRSIR}
Consider the pragmatic DRSIR function $\tilde{\mathcal{Q}}_K$ from \cref{eq:def_Q_tilde_K}. Then,
    \begin{align*}
        \tilde{\mathcal{Q}}_K(x) := \sup_{\Prob \in \bar{\mathcal{U}}_K} \E^{\Prob}\big[ \hat{v}(\xi,x) \big], \qquad x \in \R^m,
    \end{align*}
    where $\hat{v}$ is the function from \cref{cor:QP=QhatPbar} and 
    \begin{align*}
        \bar{\mathcal{U}}_K := \left \{ \bar{\Prob} \in \mathcal{P}(\R^m) \ \left | \ \int_{\R} \hat{g}_i^k(s_i) \bar{\Prob}_i(ds_i) = M_i^k, \quad k \in K_i, \ i=1,\ldots,m \right. \right \},
    \end{align*}
    with $\hat{g}_i^k := \gamma \circ g_i^k$, $k \in K_i$, $i=1,\ldots,m$.
\end{theorem}
\begin{proof}
    By definition, $\Prob \in \mathcal{C} \iff \Prob_i = \gamma(\bar{\Prob}_i)$, $i=1,\ldots,m$, for some $\bar{\Prob} \in \mathcal{P}(\R^m)$. Let such a $\Prob \in \mathcal{C}$ with corresponding $\bar{\Prob}$ be given. Then, by \cref{cor:QP=QhatPbar} we can rewrite the conditions defining $\mathcal{U}_K$ as
    \begin{align*}
        \int_{\R} g^k_i(s_i) \gamma(\bar{\Prob}_i)(ds_i) = M^k_i \iff \int_{\R} \gamma(g^k_i)(s_i) \bar{\Prob}_i(ds_i) = M^k_i, \quad k \in K_i, \ i=1,\ldots,m.
    \end{align*}
    Moreover, by the same corollary and separability of SIR, we can write the objective as 
    \begin{align*}
        \E^{\Prob}\big[v(\xi,x)\big] &= \sum_{i=1}^m \E^{\Prob_i}\big[ v_i(\xi_i, x_i) \big] = \sum_{i=1}^m \E^{\gamma(\bar{\Prob}_i)}\big[ v_i(\xi_i, x_i) \big] \\
        &= \sum_{i=1}^m \E^{\bar{\Prob}_i}\big[ \hat{v}_i(\xi_i, x_i) \big] = \E^{\bar{\Prob}}\big[ \hat{v}(\xi,x) \big].
    \end{align*}
    Combining these facts and writing $\tilde{Q}_K(x)$ as an optimization problem over $\bar{\Prob}$ instead of $\Prob$, we obtain the result.
\end{proof}

We can use \cref{thm:moment-based_DRSIR} to compare our pragmatic DRSIR approach with standard moment-based DRSIR. We observe two changes. First, $v_i$ has been replaced by the \textit{convex} function $\hat{v}_i = \gamma(v_i)$, $i=1,\ldots,m$, corresponding to a \textit{continuous} recourse model. So, the non-convexity issues caused by integrality restrictions have again disappeared. Second, the functions $g^k_i$ defining the moment conditions have been replaced by the functions $\hat{g}^k_i = \gamma(g^k_i)$, $k \in K_i$, $i=1,\ldots,m$. Thus, our pragmatic DRSIR model is equivalent to a distributionally robust \textit{continuous} recourse problem with \textit{adjusted} moment conditions.

Observe that both $v_i$ and $g^k_i$ are transformed by the UIMA transformation $\gamma$. The interpretation is that our pragmatic approach is equivalent to ``smoothing'' these functions by means of the UIMA transformation $\gamma$. To understand what this means for the moment conditions, we provide an example.

\begin{example} \label{ex:U_K_tilde}
    Consider a one-dimensional setting ($m=1$). We define $\mathcal{U}_K$ to be the set of all distributions with a fixed mean $\mu$ and mean absolute deviation $d$. Suppressing index $i$, $\mathcal{U}_K$ is of the form \cref{eq:def_U_K}, with $K=\{1,2\}$, where we define $g^1(s) = s$, $s \in \R$, with $M^1 = \mu$; and $g^2(s) = |s - \mu|$, $s 
    \in \R$, with $M^2 = d$. Then, the adjusted moment conditions corresponding to the pragmatic DRSIR model involve functions given by
    \begin{align*}
        \hat{g}^1(s) &= \int_{s - 1/2}^{s + 1/2} g^1(t) dt = s, \quad s \in \R, \quad \text{and} \\
        \hat{g}^2(s) &= \int_{s - 1/2}^{s + 1/2} g^2(t) dt = \begin{cases}
            (s - \mu)^2 + 1/4, &\text{if } \mu - 1/2 \leq s \leq \mu + 1/2, \\
            |s - \mu| &\text{otherwise}.
        \end{cases}
    \end{align*}
    Note that in this example, $g^1$ does not change (it was already smooth), while $g^2$ is smoothed by the UIMA transformation $\gamma$.  \qedexample
\end{example}

\subsection{Dual formulation and algorithm sketch}

We conclude this section with a sketch of a row-generation algorithm for pragmatic moment-based DRSIR, with the goal of showing that our approach yields a tractable problem. For this purpose, we first derive a dual formulation for the pragmatic DRSIR function, $\tilde{\mathcal{Q}}_K(x)$.

Observing that $\bar{\Prob} \in \mathcal{P}(\R^m) \iff \bar{\Prob} \in \mathcal{M}_+(\R^m), \ \int_{\R}\bar{\Prob}_i(ds_i) = 1, \ i=1,\ldots,m$, we introduce dual variables $\pi_i$, $i=1,\ldots,m$, for the equality constraints above and $\nu^k_i$, $k\in K_i$, $i=1,\ldots,m$, for the constraints defining $\bar{U}_K$. Then, the dual formulation for $\tilde{\mathcal{Q}}_K(x)$ is given by
\begin{subequations}\label{eq:Q_K_tilde_dual}
\begin{align}
    \inf_{\nu_i^k, \pi_i} \Big\{ &\sum_{i=1}^m \Big( \sum_{k \in K_i} M_i^k \nu_i^k + \pi_i \Big) \label{eq:Q_K_tilde_dual_1} \\
    \text{s.t.} \ \ \ &\sum_{k\in K_i} \hat{g}_i^k(s_i) \nu_i^k + \pi_i \geq \hat{v}_i(s_i,x_i), \quad \forall s_i \in \R, \ i=1,\ldots,m \Big\}. \label{eq:Q_K_tilde_dual_2}
\end{align}
\end{subequations}
Strong duality holds if the primal-dual pair above satisfies regularity conditions (e.g., a constraint qualification) \cite{shapiro2009semi}. This will depend on the particular choice of functions $g_i^k$, $k \in K_i$, $i=1,\ldots,m$. Here we assume that such regularity conditions are satisfied and hence, strong duality holds. Then, we can combine the dual formulation above with the outer minimization over $x$, yielding the result below.

\begin{proposition}\label{prop:pragmatic_moment_DRSIR}
Consider the pragmatic moment-based DRSIR problem \linebreak $\min_{x \in X} \{ c^T x + \tilde{\mathcal{Q}}_K(x) \}$. Suppose that strong duality holds between the primal-dual formulations for $\tilde{\mathcal{Q}}_K(x)$ for all $x \in \R^m$. Then, the pragmatic DRSIR problem is equivalent to
\begin{subequations}\label{eq:pragmatic}
\begin{align}
        \inf_{x, \nu_i^k, \pi_i} \Big\{ &c^T x + \sum_{i=1}^m \Big( \sum_{k \in K_i} M_i^k \nu_i^k + \pi_i \Big) \label{eq:pragmatic_1} \\
    s.t. \ \ \ &\sum_{k\in K_i} \hat{g}_i^k(s_i) \nu_i^k + \pi_i \geq \hat{v}_i(s_i,x_i), \quad \forall s_i \in \R, \ i=1,\ldots,m \Big\}.\label{eq:pragmatic_2}
\end{align}
\end{subequations}
\end{proposition}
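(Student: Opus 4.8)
The plan is to assemble the proposition from two ingredients already in hand: the dual representation of $\tilde{\mathcal{Q}}_K(x)$ in \eqref{eq:Q_K_tilde_dual} and the outer minimization over $x$. First I would invoke the standing hypothesis that strong duality holds between the primal defining $\tilde{\mathcal{Q}}_K(x)$ (through \cref{thm:moment-based_DRSIR}) and its dual \eqref{eq:Q_K_tilde_dual}, valid for every fixed $x \in \R^m$. This lets me replace $\tilde{\mathcal{Q}}_K(x)$ by the dual infimum
\begin{align*}
    \tilde{\mathcal{Q}}_K(x) = \inf_{\nu_i^k, \pi_i} \Big\{ \sum_{i=1}^m \Big( \sum_{k \in K_i} M_i^k \nu_i^k + \pi_i \Big) \ \Big| \ \sum_{k \in K_i} \hat{g}_i^k(s_i)\nu_i^k + \pi_i \geq \hat{v}_i(s_i, x_i), \ \forall s_i \in \R, \ i=1,\ldots,m \Big\}
\end{align*}
inside the objective of $\min_{x \in X}\{ c^T x + \tilde{\mathcal{Q}}_K(x)\}$.

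Next I would flatten the resulting nested pair of infima into a single joint minimization over $(x, \nu_i^k, \pi_i)$. This is the crux of the argument, though it is entirely routine: since the outer problem minimizes over $x$, the inner problem is an infimum over the dual variables, and $c^T x$ does not depend on the dual variables, the two optimizations may be merged, with the constraint \eqref{eq:pragmatic_2} becoming a joint feasibility region that couples $x$ to $(\nu_i^k, \pi_i)$ through the right-hand side $\hat{v}_i(s_i, x_i)$. Carrying this out reproduces the objective \eqref{eq:pragmatic_1} and the constraints \eqref{eq:pragmatic_2} verbatim, giving \eqref{eq:pragmatic}.

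To make the flattening rigorous I would verify the two inequalities between optimal values. For one direction, any feasible $(x, \nu, \pi)$ for \eqref{eq:pragmatic} gives, at that $x$, a dual-feasible $(\nu,\pi)$, so its objective is at least $c^T x + \tilde{\mathcal{Q}}_K(x)$ and hence at least the value of the pragmatic problem; taking the infimum over all feasible $(x,\nu,\pi)$ yields one inequality. For the converse, fixing any $x \in X$ and any dual-feasible $(\nu,\pi)$ produces a point feasible for \eqref{eq:pragmatic} with the same objective, and minimizing over $x$ and the dual variables gives the reverse inequality. I do not expect any real obstacle: all the substantive work---the construction of $\hat{v}$, the adjustment of the moment functions to $\hat{g}_i^k$, and strong duality itself---has already been done, so this proposition is essentially bookkeeping that collects those results.
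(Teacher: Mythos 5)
Your proposal is correct and follows essentially the same route as the paper, which simply combines the dual formulation \cref{eq:Q_K_tilde_dual} with the outer minimization over $x$ under the assumed strong duality. The only difference is that you spell out the routine two-inequality verification of the flattening step, which the paper leaves implicit.
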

\begin{proof}
    The proof follows directly from the analysis above.
\end{proof}

Similar as for the pragmatic Wasserstein DRSIR problem from \cref{thm:Q_hat_dual}, the dual formulation in \cref{prop:pragmatic_moment_DRSIR} is naturally solved using a row generation algorithm. The master problem then is 
\cref{eq:pragmatic} with only a selection of its constraints included. Which constraints are included is determined by solving the subproblem 
\begin{align}
    \sup_{s_i \in \R} \Big\{ \hat{v}_i(s_i,x) - \sum_{k \in K_i} \hat{g}_i^k(s_i) \nu_i^k - \pi_i \Big\}, \label{eq:subproblem}
\end{align}
for every $i=1,\ldots,m$.

In this algorithm sketch, we immediately see two computational benefits of our pragmatic approach: one related to the master problem and one related to the subproblems. First, observing that $\hat{v}_i(s_i,\cdot)$ is a convex, piecewise linear function with three pieces, the constraints in \cref{eq:pragmatic_2} can be rewritten as \textit{linear} constraints. Note that this would not be the case if we had pursued the standard moment-based DRSIR problem, which uses the non-convex function $v_i$ instead. Second, in contrast with~$g^k_i$, the adjusted moment function $\hat{g}^k_i$ is smooth, i.e.,  differentiable with derivative
\begin{align*}
    \tfrac{d}{ds_i}\hat{g}_i^k(s_i) = g_i^k(s_i + 1/2) - g_i^k(s_i - 1/2), \quad s_i \in \R.
\end{align*}
In conjunction with piecewise linearity of $\hat{v}_i$ in $s_i$, this can be of use when solving subproblem~\cref{eq:subproblem}. So in sum, our pragmatic approach makes both the master problem and the subproblem easier to solve.

\section{Discussion} \label{sec:discussion}

In this paper we consider MIR models under distributional uncertainty. Standard DRMIR approaches inevitably inherit difficulties from both distributionally robust optimization and mixed-integer programming. We take a fundamentally different approach and argue that in the presence of integer variables, an alternative, \textit{pragmatic}, modeling approach is justified. The main goal of our approach is to obtain models that are easier to solve.
%The main idea is to refine the uncertainty set $\mathcal{U}$ in such a way that the resulting DRMIR model is convex and hence, easier to solve. 

%We rigorously develop these ideas in the setting of SIR. 
For the special case of SIR, we obtain such models by refining the uncertainty set ${\mathcal U}$. In particular,
we show how any standard DRSIR model can be transformed to a (convex) pragmatic DRSIR model by means of the UIMA transformation $\Gamma$ defining the set $\mathcal{C}$ of ``convexifying'' distributions. We explore this approach in detail for two classes of uncertainty sets in particular: a Wasserstein ball around a reference distribution and a generalized moment-based uncertainty set. We show how the resulting pragmatic DRSIR models relate to their standard counterparts and we sketch algorithmic approaches for solving them, based on dual formulations. 

An important result from our analysis is the fact that applying the UIMA transformation $\Gamma$ to the distribution $\Prob$ is equivalent to applying it to the value function $v$. This allows us to reformulate our pragmatic approach in terms of a ``convexified'' value function $\hat{v}$. This is particularly significant in the context of extending our pragmatic approach to more general MIR models. Currently, no analogues are known of the set of ``convexifying'' distributions $\mathcal{C}$ beyond the special case of SIR. However, analogues of $\hat{v}$ are readily available for general MIR models from the literature on convex approximations of MIR models. What is more, we showed in \cref{subsubsec:v_hat} that, even without embedding the model in a DRO framework, replacing the value function $v$ by the convex function $\hat{v}$ can hedge against ``fractional'' distributional uncertainty. In addition, it simultaneously results in more computationally attractive, convex models. We believe that this is a promising avenue for extending our ideas from the limited setting of SIR to much more general MIR models. Future research should be aimed at rigorously exploring the potential of this approach in realistic settings.

An important side result of our paper is the derivation of error bounds for convex approximations of SIR models, based on the Wasserstein distance between the original distribution and an approximating distribution. In contrast with error bounds from the literature, which only hold for continuous distributions, our error bounds hold for \textit{any} distribution. Interestingly, our error bounds have a similar interpretation as the error bounds from the literature: the convex approximation is better if the original distribution is more dispersed. The fact that we are able to extend this interpretation from continuous distributions to arbitrary distributions in the SIR case suggests that the same may be possible for more general MIR models. Hence, our results suggest that in general, convex approximations of MIR models may be better if the distribution is more dispersed, regardless of whether the distribution is continuous. Future research may be aimed at testing this claim rigorously by either deriving more general error bounds or by means of numerical experiments.

% Appendix here
% Options are (1) APPENDIX (with or without general title) or 
%             (2) APPENDICES (if it has more than one unrelated sections)
% Outcomment the appropriate case if necessary
%
% \begin{APPENDIX}{<Title of the Appendix>}
% \end{APPENDIX}
%
%   or 
%
% \begin{APPENDICES}
% \section{<Title of Section A>}
% \section{<Title of Section B>}
% etc
% \end{APPENDICES}

%\begin{appendix}{Appendix: On \cref{remark:Gamma:details}}
\appendix

\section{On \cref{remark:Gamma:details}}
\label{appendix:Gamma}

In this appendix we prove the claims in \cref{remark:Gamma:details} regarding the $m$-dimensional UIMA transformation $\Gamma$ from \cref{def:Gamma}. First we show that marginally, $\Gamma$ reduces to $\gamma$.
\begin{lemma}\label{lemma:Gamma_marginal}
Let $\Prob = \Gamma(\bar{\Prob})$ for $\bar{\Prob} \in \mathcal{P}(\R^m)$. Then, $\Prob_i = \gamma(\bar{\Prob}_i)$, $i=1,\ldots,m$.
\end{lemma}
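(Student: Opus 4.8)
The plan is to prove the lemma by first giving the transformation $\Gamma$ a transparent probabilistic meaning: it is convolution with an independent uniform law on the cube $(-1/2,1/2)^m$. Concretely, I would show that if $\bar\xi \sim \bar\Prob$ and $U = (U_1,\ldots,U_m)$ is uniformly distributed on $(-1/2,1/2)^m$ and independent of $\bar\xi$, then $\Prob = \Gamma(\bar\Prob)$ is exactly the law of $\bar\xi + U$. To see this, I start from \cref{def:Gamma} and substitute $u_j = s_j - t_j$ in the defining integral; the sign from $dt_j = -du_j$ cancels the flip of the integration limits, turning the expression into $\Gamma \circ \bar{F}(s) = \int_{(-1/2,1/2)^m} \bar{F}(s-u)\,du$. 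Recognizing that $\bar{F}(s-u) = \Prob(\bar\xi \le s - u)$ and that $U$ has density $1$ on the cube, this is precisely $\Prob(\bar\xi + U \le s)$, i.e.\ the joint cdf of $\bar\xi + U$.

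Once this representation is in hand, the marginal statement is immediate. The $i$-th coordinate of $\bar\xi + U$ is $\bar\xi_i + U_i$, where $U_i$ is uniform on $(-1/2,1/2)$ and, since the $U_j$ are jointly independent of $\bar\xi$ and of one another, $U_i$ is independent of $\bar\xi_i$. Thus $\Prob_i$ is the law of $\bar\xi_i + U_i$. But convolving a one-dimensional distribution with the uniform law on a unit interval is exactly the action of $\gamma$: by \cref{prop:properties_gamma}\eqref{prop:properties_gamma:xi_integral}, the density of $\gamma(\bar\Prob_i)$ is $t \mapsto \int u_s(t)\,d\bar F_i(s)$, which is the density of $\bar\xi_i + U_i$. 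Hence $\Prob_i = \gamma(\bar\Prob_i)$, as claimed.

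Alternatively, one can argue directly at the level of cdfs, which stays closer to the paper's notation. By symmetry it suffices to treat $i=1$. Writing $F_1(s_1) = \lim_{s_2,\ldots,s_m\to\infty} F(s)$ and applying Fubini to peel off the $t_1$-integral in $\Gamma\circ\bar F(s)$, I would bound the remaining unit-cube integral between $\bar F(t_1, s_2-1/2,\ldots,s_m-1/2)$ and $\bar F_1(t_1) := \lim_{t_2,\ldots,t_m\to\infty}\bar F(t_1,\ldots,t_m)$, using coordinatewise monotonicity of the cdf. Both bounds converge to $\bar F_1(t_1)$, so the inner integral converges pointwise to $\bar F_1(t_1)$ by squeezing. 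A dominated-convergence argument (the integrand is bounded by $1$ on the bounded interval $(s_1-1/2,s_1+1/2)$) then lets me pass the limit through the outer integral to obtain $F_1(s_1) = \int_{s_1-1/2}^{s_1+1/2}\bar F_1(t_1)\,dt_1 = \gamma(\bar F_1)(s_1)$.

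The only genuine technical point in either route is the interchange of the limit (sending the spectator coordinates to $+\infty$) with integration; this is routine but relies essentially on the boundedness and coordinatewise monotonicity of joint cdfs, which is what makes the squeeze and dominated-convergence steps clean. I expect this to be the main thing to get right, whereas the substitution giving the convolution identity and the reduction to the one-dimensional representation of $\gamma$ are straightforward.
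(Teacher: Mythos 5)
Your proposal is correct. Your primary route — unwinding \cref{def:Gamma} via the substitution $u_j = s_j - t_j$ to exhibit $\Gamma(\bar{\Prob})$ as the law of $\bar{\xi} + U$ with $U$ uniform on $(-1/2,1/2)^m$ and independent of $\bar{\xi}$, then reading off the $i$-th marginal as the law of $\bar{\xi}_i + U_i$ — is a genuinely different and more structural argument than the paper's. The paper simply writes $F_i(s_i) = \lim_{s_j \to \infty,\, j \neq i} \Gamma \circ \bar{F}(s)$ and asserts that this limit equals $\int_{s_i-1/2}^{s_i+1/2} \bar{F}_i(t_i)\,dt_i$, leaving the interchange of the limit with the $(m-1)$-fold integral unjustified; your second, ``alternative'' route is precisely that argument with the missing squeeze (using coordinatewise monotonicity and boundedness of the cdf) and dominated-convergence steps supplied. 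The convolution view buys more: it makes the marginal statement an instance of the trivial fact that coordinates of a sum of independent vectors are sums of independent coordinates, it immediately explains why $\Gamma(\mathcal{P}(\R^m)) \subsetneq \mathcal{C}(\R^m)$ for $m>1$ (the perturbation $U$ has \emph{independent} coordinates, so only product-type copulas of the smeared mass arise), and it connects cleanly to \cref{prop:properties_gamma}\eqref{prop:properties_gamma:xi_integral}. Both routes are sound; either would serve as a complete proof.
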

\begin{proof}
    Let $F,\bar{F}$ be the joint cdfs corresponding to $\Prob, \bar{\Prob}$, respectively. Then, from the definition of $\Gamma$ it is clear that $F_i(s_i) = \lim_{s_j \to \infty, \ j \neq i} \Gamma \circ \bar{F}(s) = \int_{s_i - 1/2}^{s_i + 1/2} \bar{F}_i(t_i) dt_i$ for every $s_i \in \R$ and hence,  $\Prob_i = \gamma(\bar{\Prob}_i)$,  $i=1,\ldots,m$.
\end{proof}

As a result, $\Gamma$ reduces to $\gamma$ for $m=1$ and hence, it is a proper generalization of $\gamma$. However, for $m > 1$, the relation between $\Gamma$ and $\mathcal{C}(\R^m)$ is slightly different than the relation between $\gamma$ and $\mathcal{C}(\R)$. In particular, we have $\gamma(\mathcal{P}(\R)) = \mathcal{C}(\R)$ (by the definition of $\mathcal{C}(\R)$), whereas for $\Gamma$ we have the strict inclusion $\Gamma(\mathcal{P}(\R^m) \subset \mathcal{C}(\R^m)$, if $m>1$. We first show that this is indeed true, and then show that practically speaking, it does not matter.
\begin{lemma}\label{lemma:Gamma_inclusions}
    For $m > 1$ we have $\Gamma(\mathcal{P}(\R^m)) \subseteq \mathcal{C}(\R^m)$, but $\mathcal{C}(\R^m) \nsubseteq \Gamma(\mathcal{P}(\R^m))$.
\end{lemma}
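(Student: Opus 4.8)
The plan is to prove the two assertions separately: the inclusion follows immediately from the marginal behaviour of $\Gamma$, while the non-inclusion is established by exhibiting a convexifying distribution that cannot be written as $\Gamma(\bar{\Prob})$. For the inclusion $\Gamma(\mathcal{P}(\R^m)) \subseteq \mathcal{C}(\R^m)$, I would simply invoke \cref{lemma:Gamma_marginal}: if $\Prob = \Gamma(\bar{\Prob})$ for some $\bar{\Prob} \in \mathcal{P}(\R^m)$, then $\Prob_i = \gamma(\bar{\Prob}_i)$ for every $i$, so each marginal cdf $F_i$ equals $\gamma(\bar{F}_i)$ with $\bar{F}_i \in \mathcal{F}(\R)$ the cdf of $\bar{\Prob}_i$. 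This is exactly the defining membership condition for $\mathcal{C}(\R^m)$, whence $\Prob \in \mathcal{C}(\R^m)$.

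For the non-inclusion, the key structural fact I would establish first is that \emph{every} distribution in $\Gamma(\mathcal{P}(\R^m))$ is absolutely continuous with respect to Lebesgue measure. Differentiating the iterated box-integral defining $\Gamma \circ \bar{F}$ once in each coordinate (by the Leibniz rule, exactly as in the proof of \cref{prop:properties_gamma}) produces the mixed difference of $\bar{F}$ over the corners of the box, i.e.\ the explicit density
\[
    f(s) = \bar{\Prob}\Big( \prod_{i=1}^m (s_i - \tfrac{1}{2}, s_i + \tfrac{1}{2}] \Big), \qquad s \in \R^m.
\]
Thus $\Prob = \Gamma(\bar{\Prob})$ always possesses a Lebesgue density; equivalently, $\Gamma(\bar{\Prob})$ is the convolution of $\bar{\Prob}$ with the uniform law on the box $(-\tfrac{1}{2},\tfrac{1}{2})^m$, and convolution with an absolutely continuous measure is absolutely continuous. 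In particular, any member of $\Gamma(\mathcal{P}(\R^m))$ assigns zero mass to every Lebesgue-null set.

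With this in hand, it remains to produce a single $\Prob^\ast \in \mathcal{C}(\R^m)$ that is not absolutely continuous. I would take $\Prob^\ast$ to be the law of $(\eta,\dots,\eta)$ with $\eta \sim \mathrm{Unif}(0,1)$. Each marginal is $\mathrm{Unif}(0,1)$, whose cdf equals $\gamma(\bar{F})$ for $\bar{F}$ the cdf of the point mass $\delta_{1/2}$ (the one-line check being $\int_{s-1/2}^{s+1/2} \mathbf{1}\{t \ge 1/2\}\, dt = s$ for $s \in [0,1]$), so $\Prob^\ast \in \mathcal{C}(\R^m)$. However, $\Prob^\ast$ is supported on the diagonal $\{(t,\dots,t) : t \in \R\}$, which is Lebesgue-null in $\R^m$ for $m > 1$; hence $\Prob^\ast$ is not absolutely continuous, and by the previous paragraph $\Prob^\ast \notin \Gamma(\mathcal{P}(\R^m))$. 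This yields $\mathcal{C}(\R^m) \nsubseteq \Gamma(\mathcal{P}(\R^m))$.

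The main obstacle is the absolute-continuity step: one must verify carefully that the iterated box-integral defining $\Gamma$ genuinely differentiates to the stated density (equivalently, that $\Gamma$ coincides with convolution against the box-uniform measure), since this is the only place where the distinction between $\gamma$ and its multivariate analogue $\Gamma$ does real work. Once that identification is secured, the remainder is routine, as the singularity of the diagonal distribution provides an immediate contradiction.
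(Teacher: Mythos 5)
Your proof is correct, and while the inclusion half coincides with the paper's (both reduce to the marginal identity of \cref{lemma:Gamma_marginal}), your non-inclusion argument takes a genuinely different route. The paper takes $\Prob$ uniform on $(0,1/2)^2 \cup (1/2,1)^2$---whose marginals are $U(0,1)$, so it lies in $\mathcal{C}(\R^2)$---and asserts it cannot equal $\Gamma(\bar{\Prob})$ ``since the marginal distributions of $\Prob$ are not independent.'' You instead prove the structural fact that $\Gamma(\bar{\Prob})$ is the convolution of $\bar{\Prob}$ with the uniform law on the unit box, hence always absolutely continuous with density $f(s) = \bar{\Prob}\big(\prod_i (s_i - 1/2, s_i + 1/2]\big)$, and then exhibit a member of $\mathcal{C}(\R^m)$ concentrated on the diagonal. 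Your route buys rigor: the paper's one-line justification is shaky as stated, because $\Gamma(\bar{\Prob})$ need not have independent marginals (take $\bar{\Prob}$ a mixture of two distant point masses), so non-independence alone does not exclude membership; the paper's example does fail to lie in $\Gamma(\mathcal{P}(\R^2))$, but for a reason closer to yours---its density equals $2$ on a set of positive measure, whereas your formula shows the density of any $\Gamma(\bar{\Prob})$ is a box probability and hence bounded by $1$. Your absolute-continuity criterion is clean, applies verbatim for every $m>1$, and the supporting computations ($U(0,1) = \gamma(\delta_{1/2})$, the mixed-difference density, the diagonal being Lebesgue-null) all check out; the only mild cost is that you must carry out the multivariate differentiation of the iterated box integral, which the paper's counterexample avoids.
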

\begin{proof}
    Let $F \in \Gamma(\bar{F})$ for some $\bar{F} \in \mathcal{F}(\R^m)$. Then, it is not hard to see that $F_i(s_i) = \lim_{s_j \to \infty, \ j \neq i} \Gamma(\bar{F})(s) = \int_{s_i - 1/2}^{s_i + 1/2} \bar{F}_i(t_i) dt_i$. Hence, $F \in \mathcal{C}$. For the second statement, let $\Prob \in \mathcal{P}(\R^2)$ such that $\xi$ is uniformly distributed on $(0,1/2)^2 \cup (1/2, 1)^2$ under $\Prob$. Then, marginally, $\xi_1, \xi_2 \sim U(0,1)$ and hence, $\Prob \in \mathcal{C}(\R^2)$. However, there clearly does not exist $\bar{\Prob} \in \mathcal{P}(\R^2)$ such that $\Prob = \Gamma(\bar{\Prob})$, since the marginal distributions of $\Prob$ are not independent. Hence, $\mathcal{C}(\R^2) \nsubseteq \Gamma(\mathcal{P}(\R^2))$. We can construct an analogous counterexample for any $m > 1$.
\end{proof}
The intuition behind this result is that whether $F$ is an element of $\mathcal{C}(\R^m)$ depends only on the \textit{marginal} distributions $F_i$, $i=1,\ldots,m$. However, there are numerous joint distributions $F$ with these marginals (corresponding to different copulas), not all of which can be written as $\Gamma(\bar{F})$ for some $\bar{F} \in \mathcal{F}(\R^m)$. Nevertheless, there always exists at least one joint distribution that can be written in this way, as we show in the following lemma.

\begin{lemma} \label{lemma:Gamma_F_tilde}
    Let $\Prob \in \mathcal{C}(\R^m)$ with joint cdf $F$ be given. Then, there exists $\tilde{\Prob} \in \Gamma(\mathcal{P}(\R^m))$ with joint cdf $\tilde{F}$ such that $\tilde{F}_i = F_i$, $i=1,\ldots,m$. In particular, this holds for $\tilde{F}(s) := \Pi_{i=1}^m F_i(s_i)$, $s \in \R^m$.
\end{lemma}
\begin{proof}
    Since $F \in \mathcal{C}(\R^m)$, there exists $\bar{F}_i$, such that $F_i = \gamma(\bar{F}_i)$, $i=1,\ldots,m$. Let $\bar{F}(s) := \Pi_{i=1}^m \bar{F}_i(s_i)$, $s \in \R^m$. Then, it is not hard to see that
    \begin{align*}
        \tilde{F}(s) = \int_{s_1 - 1/2}^{s_1 + 1/2} \cdots \int_{s_m - 1/2}^{s_m + 1/2} \bar{F}(t) dt_m \cdots dt_1, \quad s \in \R^m,
    \end{align*}
    so indeed, $\tilde{\Prob} \in \Gamma(\mathcal{P}(\R^m))$.
\end{proof}
This result shows that restricting ourselves to $\Gamma(\mathcal{P}(\R^m))$ instead of $\mathcal{C}(\R^m)$ goes without loss of generality in terms of the marginal distributions and, by separability of SIR, of the objective function.

%\end{appendix}

% Acknowledgments here
\section*{Acknowledgements}
We wish to thank Wim Klein Haneveld for his very fruitful comments on earlier versions of this paper.

\bibliographystyle{siamplain}
\bibliography{references}

\begin{thebibliography}{10}

\bibitem{bansal2018decomposition}
{\sc M.~Bansal, K.-L. Huang, and S.~Mehrotra}, {\em Decomposition algorithms
  for two-stage distributionally robust mixed binary programs}, SIAM Journal on
  Optimization, 28 (2018), pp.~2360--2383.

\bibitem{bansal2019solving}
{\sc M.~Bansal and S.~Mehrotra}, {\em On solving two-stage distributionally
  robust disjunctive programs with a general ambiguity set}, European Journal
  of Operational Research, 279 (2019), pp.~296--307.

\bibitem{bayraksan2015data}
{\sc G.~Bayraksan and D.~K. Love}, {\em Data-driven stochastic programming
  using phi-divergences}, in The Operations Research Revolution, D.~Aleman and
  A.~Thiele, eds., Tutorials in Operations Research, INFORMS, 2015, pp.~1--19.

\bibitem{Caroe1999}
{\sc C.~C. Car{\o}e and R.~Schultz}, {\em {Dual decomposition in stochastic
  integer programming}}, Operations Research Letters, 24 (1999), pp.~37--45.

\bibitem{clement2008elementary}
{\sc P.~Clement and W.~Desch}, {\em An elementary proof of the triangle
  inequality for the wasserstein metric}, Proceedings of the American
  Mathematical Society, 136 (2008), pp.~333--339.

\bibitem{delage2010distributionally}
{\sc E.~Delage and Y.~Ye}, {\em Distributionally robust optimization under
  moment uncertainty with application to data-driven problems}, Operations
  Research, 58 (2010), pp.~595--612.

\bibitem{duque2020distributionally}
{\sc D.~Duque, S.~Mehrotra, and D.~P. Morton}, {\em Distributionally robust
  two-stage stochastic programming}, Available at Optimization Online,  (2020).
\newblock \url{http://www.optimization-online.org/DB_FILE/2020/09/8042.pdf}.

\bibitem{gao2016distributionally}
{\sc R.~Gao and A.~J. Kleywegt}, {\em Distributionally robust stochastic
  optimization with wasserstein distance}, arXiv preprint arXiv:1604.02199,
  (2016).

\bibitem{hanasusanto2018conic}
{\sc G.~A. Hanasusanto and D.~Kuhn}, {\em Conic programming reformulations of
  two-stage distributionally robust linear programs over {W}asserstein balls},
  Operations Research, 66 (2018), pp.~849--869.

\bibitem{kim2020dual}
{\sc K.~Kim}, {\em Dual decomposition of two-stage distributionally robust
  mixed-integer programming under the wasserstein ambiguity set}, Available at
  Optimization Online,  (2020).
\newblock \url{http://www.optimization-online.org/DB_FILE/2020/04/7723.pdf}.

\bibitem{kleinhaneveld2006simple}
{\sc W.~K. Klein~Haneveld, L.~Stougie, and M.~H. {van der Vlerk}}, {\em Simple
  integer recourse models: convexity and convex approximations}, Mathematical
  Programming, 108 (2006), pp.~435--473.

\bibitem{kleinhaneveld1999}
{\sc W.~K. Klein~Haneveld and M.~H. van~der Vlerk}, {\em Stochastic integer
  programming: General models and algorithms}, Annals of Operations Research,
  85 (1999), pp.~39--57.

\bibitem{kucukyavuz2017introduction}
{\sc S.~K{\"u}{\c{c}}{\"u}kyavuz and S.~Sen}, {\em An introduction to two-stage
  stochastic mixed-integer programming}, in Leading Developments from INFORMS
  Communities, Tutorials in Operations Research, INFORMS, 2017, pp.~1--27.

\bibitem{kuhn2019wasserstein}
{\sc D.~Kuhn, P.~M. Esfahani, V.~A. Nguyen, and S.~Shafieezadeh-Abadeh}, {\em
  Wasserstein distributionally robust optimization: Theory and applications in
  machine learning}, in Operations Research \& Management Science in the Age of
  Analytics, INFORMS, 2019, pp.~130--166.

\bibitem{louveaux2003stochastic}
{\sc F.~V. Louveaux and R.~Schultz}, {\em Stochastic integer programming},
  Handbooks in Operations Research and Management Science, 10 (2003),
  pp.~213--266.

\bibitem{louveaux1993stochastic}
{\sc F.~V. Louveaux and M.~H. {van der Vlerk}}, {\em Stochastic programming
  with simple integer recourse}, Mathematical Programming, 61 (1993),
  pp.~301--325.

\bibitem{luo2019decomposition_method}
{\sc F.~Luo and S.~Mehrotra}, {\em A decomposition method for
  distributionally-robust two-stage stochastic mixed-integer cone programs},
  arXiv preprint arXiv:1911.08713,  (2019).

\bibitem{esfahani2018data}
{\sc P.~Mohajerin~Esfahani and D.~Kuhn}, {\em Data-driven distributionally
  robust optimization using the {W}asserstein metric: Performance guarantees
  and tractable reformulations}, Mathematical Programming, 171 (2018),
  pp.~115--166.

\bibitem{rahimian2019distributionally}
{\sc H.~Rahimian and S.~Mehrotra}, {\em Distributionally robust optimization: A
  review}, arXiv preprint arXiv:1908.05659,  (2019).

\bibitem{romeijnders2017assessing}
{\sc W.~Romeijnders, D.~P. Morton, and M.~H. {van der Vlerk}}, {\em Assessing
  the quality of convex approximations for two-stage totally unimodular integer
  recourse models}, INFORMS Journal on Computing, 29 (2017), pp.~211--231.

\bibitem{romeijnders2016general}
{\sc W.~Romeijnders, R.~Schultz, M.~H. {van der Vlerk}, and W.~K.
  Klein~Haneveld}, {\em A convex approximation for two-stage mixed-integer
  recourse models with a uniform error bound}, SIAM Journal on Optimization, 26
  (2016), pp.~426--447.

\bibitem{romeijnders2015}
{\sc W.~Romeijnders, M.~H. {van der Vlerk}, and W.~K. Klein~Haneveld}, {\em
  Convex approximations for totally unimodular integer recourse models: A
  uniform error bound}, SIAM Journal on Optimization, 25 (2015), pp.~130--158.

\bibitem{romeijnders2016tu}
{\sc W.~Romeijnders, M.~H. {van der Vlerk}, and W.~K. Klein~Haneveld}, {\em
  Total variation bounds on the expectation of periodic functions with
  applications to recourse approximations}, Mathematical Programming, 157
  (2016), pp.~3--46.

\bibitem{shapiro2009semi}
{\sc A.~Shapiro}, {\em Semi-infinite programming, duality, discretization and
  optimality conditions}, Optimization, 58 (2009), pp.~133--161.

\bibitem{vanbeesten2020convex}
{\sc E.~R. van Beesten and W.~Romeijnders}, {\em Convex approximations for
  two-stage mixed-integer mean-risk recourse models with conditional
  value-at-risk}, Mathematical Programming, 181 (2020), pp.~473--507.

\bibitem{vanbeesten2022parametric}
{\sc E.~R. van Beesten and W.~Romeijnders}, {\em Parametric error bounds for
  convex approximations of two-stage mixed-integer recourse models with a
  random second-stage cost vector}, arXiv preprint arXiv:2206.01605,  (2022).

\bibitem{vanderlaan2020LBDA}
{\sc N.~van~der Laan and W.~Romeijnders}, {\em A loose {B}enders decomposition
  algorithm for approximating two-stage mixed-integer recourse models},
  Mathematical Programming,  (2020), pp.~1--34.

\bibitem{vanderlaan2018higher}
{\sc N.~van~der Laan, W.~Romeijnders, and M.~H. van~der Vlerk}, {\em
  Higher-order total variation bounds for expectations of periodic functions
  and simple integer recourse approximations}, Computational Management
  Science, 3 (2018), pp.~325--349.

\bibitem{vlerk2004}
{\sc M.~H. {van der Vlerk}}, {\em Convex approximations for complete integer
  recourse models}, Mathematical Programming, 99 (2004), pp.~297--310.

\bibitem{vlerk2010}
{\sc M.~H. {van der Vlerk}}, {\em Convex approximations for a class of
  mixed-integer recourse models}, Annals of Operations Research, 177 (2010),
  pp.~139--150.

\bibitem{xie2018distributionally}
{\sc W.~Xie and S.~Ahmed}, {\em Distributionally robust simple integer
  recourse}, Computational Management Science, 15 (2018), pp.~351--367.

\end{thebibliography}
\end{document}